\documentclass[a4paper,11pt]{article}
\usepackage{makeidx}
\usepackage{graphicx}
\usepackage[english,activeacute]{babel}
\usepackage[latin1]{inputenc}
\usepackage{amsmath}
\usepackage{amsfonts}
\usepackage{amssymb}
\usepackage{amsthm}
\usepackage{latexsym}
\usepackage{colortbl}
\usepackage{multicol}
\usepackage{hyperref}
\usepackage{xcolor}
\usepackage{orcidlink}
\usepackage{mathrsfs}
\usepackage{tikz}
\usepackage{pgfplots}
\pgfplotsset{compat=1.18}
\usepackage{mathtools}

\newtheorem{theorem}{Theorem}[section]
\newtheorem{proposition}[theorem]{Proposition}
\newtheorem{lemma}[theorem]{Lemma}

\newtheorem{example}[theorem]{Example}

\newcommand{\N}{\mathbb{N}}

\newcommand{\R}{\mathbb{R}}
\newcommand{\C}{\mathbb{C}}
\newcommand{\PP}{\mathbb{P}}

\newcommand{\Charlier}[1]{C^{(\mu)}_{#1}}      
\newcommand{\Meixner}[1]{M^{(\gamma,\mu)}_{#1}}
\newcommand{\Sob}[1]{\mathbb{S}_{#1}}          
\newcommand{\dif}{\Delta}                       
\newcommand{\nab}{\nabla}                       
\newcommand{\norm}[1]{\lVert #1\rVert}
\newcommand{\Kn}{K}                             
\newcommand{\ff}[2]{\left[#1\right]_{#2}}       
\newcommand{\inner}[2]{\left\langle #1,#2\right\rangle}
\newcommand{\Aop}{\mathcal{A}}
\newcommand{\Bop}{\mathcal{B}}

\newcommand{\Nf}{\mathcal{N}}

\providecommand{\dst}{\displaystyle}

\begin{document}

\title{On generating functions and Mehler--Heine formulas for discrete Charlier and Meixner Sobolev-type orthogonal polynomials}
\author{{Anier Soria-Lorente %
\orcidlink{0000-0003-3488-3094}}$^{1}$,{Junior Michel \orcidlink{0009-0005-8450-2757}}$^{1}$, \\
\\
$^{1}$Department of Quantitative Methods, Loyola University,\\ Avda. de
las Universidades, 2. Dos Hermanas, Seville,\\ 41704, Andalusia, Spain.\\
asoria@uloyola.es, jmichel@uloyola.es
}

\maketitle
\begin{abstract}
Generating functions are among the most important analytical tools in the theory of orthogonal polynomials, providing a unified framework for deriving structural identities, asymptotic expansions, and zero distributions. However, despite the extensive development of discrete Sobolev orthogonal polynomials, no general generating-function theory has been available for the Sobolev-type Charlier and Meixner families associated with arbitrary-order forward differences $j\geq 1$ and an exterior mass point $\alpha<0$. In this paper, we develop the first unified generating-function framework for these families. Starting from explicit connection formulas, we derive generating functions for the Sobolev-type polynomials and their iterated forward differences, which serve as the main analytical tool for establishing new Mehler--Heine formulas. The resulting asymptotic analysis shows that an exterior Sobolev perturbation generates exactly one exceptional zero converging to the mass point, while the remaining zeros preserve the classical asymptotic distribution. Moreover, the limiting Mehler--Heine functions are independent of both the Sobolev mass parameter and the order of the forward difference operator, revealing a universality phenomenon for higher-order discrete Sobolev perturbations. These results considerably extend the analytical theory of discrete Sobolev orthogonal polynomials and establish a direct connection between generating functions, Mehler--Heine asymptotics, and the asymptotic distribution of the zeros for the discrete Sobolev-type Charlier and Meixner families.
\end{abstract}

\vspace{0.3cm}

\textit{Keywords.} Discrete Sobolev orthogonal polynomials, Charlier polynomials, Meixner polynomials, Generating functions, Mehler--Heine formulas, Asymptotic analysis.\\

{\footnotesize Corresponding author: Anier Soria-Lorente}



\section{Introduction}

Orthogonal polynomials constitute one of the fundamental pillars of
approximation theory and the theory of special functions. Their rich
algebraic and analytic structure has made them indispensable in a wide
variety of mathematical disciplines, including numerical analysis,
probability theory, mathematical physics, random matrix theory, and
spectral methods. Classical families possess a remarkable collection of
structural properties, such as three-term recurrence relations,
difference or differential equations, Christoffel--Darboux kernels,
Rodrigues-type formulas, generating functions, and asymptotic
expansions, all of which are intimately connected and provide
complementary descriptions of the same polynomial system
\cite{Ismail05,KLS2010,nikiforov1991classical}.

Among these analytical tools, generating functions occupy a particularly
distinguished position. Far from being merely compact representations of
polynomial sequences, they provide a unified framework from which many
of the fundamental properties of orthogonal polynomials can be derived.
Generating functions encode the entire polynomial family into a single
analytic object, allowing one to obtain recurrence relations,
connection formulas, lowering and raising operators, explicit
coefficients, combinatorial identities, integral representations, and,
perhaps most importantly, asymptotic information. For many classical
families, they constitute the natural starting point for studying both
their algebraic structure and their asymptotic behaviour
\cite{Ismail05,KLS2010}.

Within the Askey scheme of hypergeometric orthogonal polynomials, the
Charlier and Meixner families occupy a central position as the canonical
discrete analogues of the Hermite and Laguerre families. Besides their
importance in the general theory of special functions, they are closely
related to the Poisson and negative binomial probability
distributions, respectively, and arise naturally in stochastic
processes, queueing theory, birth--death models, and discrete
approximation problems. Their classical theory is exceptionally rich,
including explicit generating functions, hypergeometric
representations, ladder operators, second-order difference equations,
reproducing kernels, and Mehler--Heine asymptotic formulas
\cite{Ismail05,KLS2010,nikiforov1991classical}. These properties make
them natural candidates for investigating how classical analytical
structures survive under nonstandard perturbations.

One of the most important generalizations of classical orthogonality is
provided by Sobolev-type orthogonal polynomials, where the underlying
inner product is modified by incorporating derivatives or, in the
discrete setting, finite differences evaluated at prescribed points.
Such perturbations significantly enrich the algebraic and asymptotic
properties of the corresponding polynomial families while preserving
many of the structural features that make orthogonal polynomials useful
in analysis and applications. During the last three decades, Sobolev
orthogonality has developed into one of the most active areas within the
constructive theory of orthogonal polynomials, motivated both by its
intrinsic mathematical interest and by its applications to spectral
methods, approximation theory, interpolation, and numerical analysis
\cite{maxu,mapepi,khol}.

The discrete Sobolev framework is particularly attractive because it
extends many of the classical concepts of the continuous theory while
introducing genuinely new phenomena. Replacing derivatives by forward or
backward difference operators leads to families whose analytical
behaviour is considerably more intricate than in the classical setting.
Connection formulas become substantially more involved, recurrence
relations increase in order, and new holonomic difference equations
appear. These structural changes also raise natural questions regarding
the analytical tools that play a fundamental role in the classical
theory. Among them, generating functions and Mehler--Heine asymptotic
formulas stand out because they provide a global description of the
polynomial sequence and its limiting behaviour.

Although remarkable progress has been achieved in the algebraic theory
of discrete Sobolev orthogonal polynomials, the analytical theory
remains considerably less developed. In particular, while connection
formulas, recurrence relations, ladder operators, and difference
equations have been extensively investigated, comparatively little is
known about generating functions in the general discrete Sobolev
setting. This absence is particularly significant because generating
functions are often the analytical bridge connecting the algebraic
properties of orthogonal polynomials with their asymptotic behaviour.
Without them, many classical techniques become unavailable, making the
development of a unified asymptotic theory considerably more difficult.

The objective of the present paper is precisely to address this
fundamental gap for the discrete Sobolev-type Charlier and Meixner
families associated with arbitrary-order forward differences evaluated
at an exterior mass point. As will be shown throughout the paper, the
construction of the corresponding generating functions is far from a
routine extension of the classical theory. The Sobolev perturbation
introduces a degree-dependent reproducing-kernel denominator that
completely destroys the recursive mechanism underlying the classical
generating functions. Overcoming this obstacle requires a different
analytical approach and ultimately leads to a unified framework from
which new asymptotic results naturally emerge.

In particular, the generating functions developed here become the
central analytical tool for establishing new Mehler--Heine formulas and
for describing the asymptotic distribution of the zeros of the
Sobolev-type families. As a consequence, the present work not only fills
an important gap in the existing literature but also reveals new
asymptotic phenomena that do not appear in the previously studied
boundary cases.

\subsection{Related Work}

The theory of Sobolev orthogonal polynomials originated from the study
of orthogonality with respect to inner products involving derivatives
evaluated together with classical measures. Since the pioneering work of
Marcell\'an and collaborators~\cite{MR1990,almarero}, this subject has
evolved into one of the most active branches of the constructive theory
of orthogonal polynomials. Comprehensive surveys describing its
development and principal achievements can be found in
Marcell\'an and Xu~\cite{maxu}, Marcell\'an, P\'erez and
Pi\~nar~\cite{mapepi}, and Koekoek~\cite{khol}.

The discrete counterpart of Sobolev orthogonality was introduced by
Bavinck~\cite{B1995,B1995gen,B1996}, who replaced derivatives by finite
difference operators and established the first analytical properties of
the resulting polynomial families. Besides introducing the corresponding
Sobolev-type Charlier polynomials, Bavinck constructed an infinite-order
difference operator having these polynomials as eigenfunctions,
demonstrating that many structural features of the classical theory
persist under discrete Sobolev perturbations while, at the same time,
revealing the appearance of genuinely new phenomena.

Subsequent research considerably expanded the algebraic theory of
discrete Sobolev orthogonal polynomials. The Meixner--Sobolev family was
systematically investigated by \'Area, Godoy and
Marcell\'an~\cite{agm}, who obtained its three-term recurrence relation,
ladder operators, and hypergeometric representation. Later,
\'Area, Godoy, Marcell\'an and Moreno-Balc\'azar~\cite{agmmb}
established the corresponding ratio and Plancherel--Rotach asymptotic
formulas, providing a detailed description of their large-degree
behaviour.

More recently, attention has shifted towards higher-order Sobolev
perturbations. Huertas and Soria-Lorente~\cite{HS2019} derived several
new structural properties for discrete Sobolev-type Charlier
polynomials, whereas Costas-Santos, Soria-Lorente and
Vilaire~\cite{Costas-2022} developed a unified algebraic framework for
Sobolev-type Meixner polynomials associated with arbitrary-order forward
and backward differences. Their work established explicit connection
formulas, hypergeometric representations, ladder operators,
$(2j+3)$-term recurrence relations, and second-order holonomic
difference equations. It also obtained a Mehler--Heine formula in the
special boundary case $\alpha=0$, where the Sobolev mass is located at
the endpoint of the support and the limiting behaviour remains
essentially classical.

Parallel developments have confirmed that holonomic second-order
difference equations constitute a common structural feature of discrete
Sobolev families. Related equations have been obtained by
Rebocho~\cite{R22} and, in the Hahn setting, by Filipuk,
Ma\~nas-Ma\~nas and Moreno-Balc\'azar~\cite{gamamo}. Likewise,
Dominici and Moreno-Balc\'azar~\cite{domo} investigated the asymptotic
behaviour of a closely related Charlier--Sobolev family, further
demonstrating the richness of the asymptotic theory associated with
discrete Sobolev perturbations.

Collectively, these contributions have established a comprehensive
algebraic theory for discrete Sobolev-type orthogonal polynomials.
Connection formulas, reproducing kernels, ladder operators, recurrence
relations, hypergeometric representations, and holonomic difference
equations are now well understood for several important families.
Nevertheless, one of the most fundamental analytical ingredients of the
classical theory has remained surprisingly underdeveloped, namely the
theory of generating functions.

For the classical Charlier and Meixner polynomials, generating functions
provide considerably more than compact representations of the polynomial
sequence. They encode the complete family into a single analytic object
from which numerous structural properties follow naturally, including
connection formulas, lowering operators, explicit coefficients,
combinatorial identities, and asymptotic expansions
\cite{Ismail05,KLS2010}. In many situations, they constitute the
starting point for deriving Mehler--Heine formulas and for analysing the
asymptotic behaviour of the zeros.

In contrast, generating functions for discrete Sobolev-type families are
remarkably scarce. A generating function for a particular nonstandard
Meixner--Sobolev family involving first-order differences was obtained
by Moreno-Balc\'azar, P\'erez and Pi\~nar~\cite{mbtpmp}. Later,
Moreno-Balc\'azar~\cite{mbDMS} established a Mehler--Heine formula and
studied the asymptotic behaviour of the zeros for the
$\Delta$-Meixner--Sobolev polynomials corresponding to a first-order
difference and a Sobolev mass located at the endpoint of the support.
Interestingly, in that situation the Sobolev perturbation modifies the
classical Mehler--Heine limit only through a multiplicative constant,
and consequently the asymptotic zero distribution remains unchanged.

The situation is fundamentally different when the Sobolev mass is placed
outside the support of the orthogonality measure. In this setting, the
connection formula contains the degree-dependent denominator
\begin{equation*}
    \delta_n=1+\lambda\,\Kn^{(j,j)}_{n-1}(\alpha,\alpha),
\end{equation*}
whose dependence on the polynomial degree destroys the recursive
mechanism that underlies the classical generating functions. As a
result, the analytical techniques available in the classical theory no
longer apply directly, and the construction of explicit generating
functions becomes a substantially more delicate problem.

To the best of our knowledge, no general generating-function theory has
previously been developed for discrete Sobolev-type Charlier and
Meixner orthogonal polynomials associated with arbitrary-order forward
differences evaluated at an exterior mass point. Consequently, the
corresponding Mehler--Heine asymptotics and the asymptotic behaviour of
their zeros have also remained essentially unexplored in this general
framework.

\subsection{Our Contributions}

The present paper closes the gap described above by developing, to the
best of our knowledge, the first unified generating-function framework
for discrete Sobolev-type Charlier and Meixner orthogonal polynomials
associated with arbitrary-order forward differences evaluated at an
exterior mass point. Unlike the classical setting, where generating
functions follow naturally from the recurrence structure of the
polynomial sequence, the Sobolev perturbation introduces the
degree-dependent factor $\delta_n$
which completely destroys the recursive mechanism underlying the
classical generating functions. Consequently, the construction of
explicit generating functions requires a different analytical approach
combining connection formulas, reproducing kernels, asymptotic analysis,
and careful estimates for the Sobolev correction terms.

Our first main contribution consists in deriving explicit generating
functions for both discrete Sobolev-type families. More precisely, we
obtain closed generating-function representations for the Sobolev
polynomials themselves as well as for their iterated forward
differences. These representations extend the classical generating
functions of the Charlier and Meixner families to a considerably more
general Sobolev framework and provide a unified analytic description
valid for arbitrary orders of the forward difference operator.

Beyond their intrinsic interest, these generating functions become the
central analytical tool of the paper. They allow us to develop a unified
asymptotic analysis leading to new Mehler--Heine formulas for both
Sobolev-type families. To the best of our knowledge, these are the first
Mehler--Heine formulas established for arbitrary-order discrete
Sobolev-type Charlier and Meixner polynomials with an exterior mass
point. Unlike the previously investigated boundary case
$\alpha=0$, where the Sobolev perturbation modifies the classical limit
only through a multiplicative constant, the present setting exhibits a
qualitatively different asymptotic behaviour.

A second major contribution of the paper is the complete description of
the asymptotic distribution of the zeros. Using the new Mehler--Heine
formulas, we prove that the exterior Sobolev perturbation generates a
single exceptional zero converging to the mass point
$\alpha$, whereas all remaining zeros converge to the zeros of the
corresponding classical limiting function. This behaviour reveals how
the Sobolev mass influences the global asymptotic structure of the
polynomial sequence while preserving the classical distribution of the
remaining zeros.

Perhaps the most remarkable outcome of our analysis is the emergence of
a universality phenomenon. Although the finite-degree Sobolev-type
polynomials depend explicitly on both the Sobolev parameter
$\lambda$ and the order $j$ of the forward difference operator, the
limiting Mehler--Heine functions turn out to be completely independent
of both quantities. Consequently, the asymptotic behaviour is governed
only by the underlying classical family, despite the presence of
higher-order Sobolev perturbations. This result highlights an unexpected
stability of the asymptotic regime and provides new insight into the
interaction between discrete Sobolev perturbations and classical
orthogonal polynomial theory.

From a broader perspective, the results presented here considerably
extend the current analytical theory of discrete Sobolev orthogonal
polynomials. Besides filling a significant gap in the existing
literature, they establish generating functions as an effective
analytical framework for studying discrete Sobolev families and open the
door to further investigations on asymptotic analysis, zero
distribution, connection problems, and discrete Sobolev perturbations
for other families within the Askey scheme.

The paper is organized as follows. Section~\ref{sec:prelim} introduces
the classical Charlier and Meixner families together with the discrete
Sobolev inner product and the corresponding connection formulas.
Section~\ref{sec:formalGF} develops the generating-function theory for
the Sobolev-type polynomials and their iterated forward differences.
Section~\ref{sec:mehlerheine} establishes the new Mehler--Heine formulas
and derives the asymptotic distribution of the zeros. Finally,
Section~\ref{sec:conclu} summarizes the main conclusions and discusses
several directions for future research.

\section{Mathematical Background}
\label{sec:prelim}

In this section we recall the main definitions and results on the
Charlier and Meixner families of orthogonal polynomials, and we introduce the
discrete Sobolev-type Charlier and Meixner polynomials that are the object of this
paper. Following the unified style of~\cite{Costas-2022,KLS2010}, we present the
results in a compact form, valid for both families upon particularising the two
columns of Table~\ref{tab:families}. Throughout, $\dif$ and $\nab$ denote
the forward and backward difference operators
\begin{equation}\label{eq:diffops}
\dif f(x)=f(x+1)-f(x),\quad \nab f(x)=f(x)-f(x-1),
\end{equation}
applied recursively by
\begin{equation}\label{forWop}
    \dif^{n}f(x)=\dif[\dif^{n-1}f(x)],
\end{equation}
and a subscript
$\dif_x$ or $\dif_y$ indicates the variable acted upon. The forward
operator satisfies the product rule
\begin{equation}\label{eq:prodrule}
\dif[f(x)g(x)]=f(x)\dif g(x)+g(x+1)\dif f(x).
\end{equation}
Throughout this paper, $[\cdot]_k$ denotes the falling factorial of order $k$, defined by
\begin{equation*}
    [x]_k=\begin{cases}
        1, & k=0,\\\\
        \dst\prod_{j=0}^{k-1}(x-j), & k\geq 1.
    \end{cases}
\end{equation*}
Moreover,
\begin{equation*}
    (x)_k=(-1)^k[-x]_k=\frac{\Gamma(x+k)}{\Gamma(x)},
\end{equation*}
denotes the Pochhammer symbol of order $k$, and $\Gamma$ stands for the Gamma function. Let $\PP$ denote the linear space of polynomials with real coefficients.

Fix one of the two columns of Table~\ref{tab:families}, that is, either
the monic Charlier polynomials $\Charlier{n}(x)$ with $\mu>0$ or the
monic Meixner polynomials $\Meixner{n}(x)$ with $\gamma>0$ and
$0<\mu<1$. We denote by $P_n$ the corresponding generic monic family and
refer to~\cite{Ismail05,KLS2010,nikiforov1991classical} for the
background.

\begin{table}[h]
\centering
\caption{Monic Charlier $\Charlier{n}(x)$ and Meixner
$\Meixner{n}(x)$ polynomials and their data.}
\label{tab:families}
\renewcommand{\arraystretch}{2.0}
\begin{tabular}{|c|c|c|}
\hline
$P_{n}(x)$ & $\Charlier{n}(x)$, $\mu>0$ &
$\Meixner{n}(x)$, $\gamma>0$, $0<\mu<1$\\ \hline\hline
$\rho(x)$ & $\dfrac{e^{-\mu}\mu^{x}}{\Gamma(x+1)}$ &
$\dfrac{\mu^{x}\Gamma(\gamma+x)}{\Gamma(\gamma)\Gamma(x+1)}$\\ \hline
$\norm{P_{n}}^{2}$ & $n!\,\mu^{n}$ &
$\dfrac{n!\,(\gamma)_{n}\,\mu^{n}}{(1-\mu)^{\gamma+2n}}$\\ \hline
$\alpha_{n}$ & $n+\mu$ & $\dfrac{n(1+\mu)+\mu\gamma}{1-\mu}$\\ \hline
$\beta_{n}$ & $n\mu$ & $\dfrac{n\mu(n-1+\gamma)}{(\mu-1)^{2}}$\\ \hline
\end{tabular}
\end{table}

\begin{proposition}\label{prop:classical}
	Let $\{P_{n}\}_{n\geq 0}$ be the sequence of monic classical discrete polynomials of degree $n$. The following statements hold.
	
	\begin{enumerate}
		\item The sequence $\{P_{n}\}_{n\geq 0}$ consists of monic polynomials orthogonal with respect to the inner product defined on the space of polynomials $\PP$, 
		\begin{equation*}
		\left\langle f,g\right\rangle =\sum_{x\geq 0}f(x)g(x)\rho(x),
		\end{equation*}
		where $\rho(x)$ is defined by the corresponding column of the second row of Table \ref{tab:families}.

		\item Squared norm. For every $n\in\mathbb{N}$, the quantity $\|P_n\|^2=\left\langle P_n,P_n\right\rangle$ is given in Table~\ref{tab:families}.

        \item The three-term recurrence relation
        \begin{equation}
        xP_n(x)=P_{n+1}(x)+\alpha_nP_n(x)+\beta_nP_{n-1}(x),
        \quad n\ge 0,
        \label{eq:ttrr}
        \end{equation}
        holds, with initial conditions $P_{-1}\equiv0$ and $P_0\equiv1$, where
        the recurrence coefficients $\alpha_n$ and $\beta_n$ are given in the
        corresponding columns of Table~\ref{tab:families}.
		
		\item Generating functions. Throughout this paper the classical
		generating function of a monic family $\{P_n\}_{n\ge0}$ is taken in the
		\emph{monic-normalised} form
		\begin{equation}\label{eq:GPmonic}
			G_P(x,t)=\sum_{n=0}^{\infty}\frac{\psi_n}{n!}\,P_n(x)\,t^n,
		\end{equation}
		where the normalising sequence $\psi_n$ is fixed by the requirement that
		$G_P$ coincide with the standard closed form, namely
		$\psi_n=(-\mu)^{-n}$ in the Charlier case and
		$\psi_n=(\mu-1)^{n}\mu^{-n}$ in the Meixner case.
		\begin{itemize}
			\item Charlier case ($|t|<\mu$):
			\begin{equation}\label{eq:GCmonic}
				G_C(x,t)=e^t\left( 1-\frac{t}{\mu}\right)^x= \sum_{n=0}^{\infty}\frac{(-\mu)^{-n}}{n!}\Charlier{n}(x)t^n,\quad\mu>0.
			\end{equation}
			
			\item Meixner case ($|t|<\mu$):
			\begin{equation}\label{eq:GMmonic}
				G_M(x,t)=\left( 1-\frac{t}{\mu}\right)^x(1-t)^{-x-\gamma}= \sum_{n=0}^{\infty}\frac{(\mu-1)^{n}}{n!\mu^n}\Meixner{n}(x)t^n,
			\end{equation}
		\end{itemize}
		where, $\gamma>0$, $0<\mu<1$. In the Meixner case the two singularities of the closed form sit at
		$t=\mu$ and $t=1$; since $0<\mu<1$, the radius of convergence is the
		smaller value $\mu$, so both families converge on the common disc
		$|t|<\mu$.

        \item Forward shift operator. For every $n\ge1$ and $0\le k\le n$, the
        $k$-th forward difference of a monic classical discrete family lowers
        the degree by $k$ according to        \begin{equation}\label{eq:fwdgeneral}
        \dif^k P_n(x)=[n]_k\,P_{n-k}(x),\quad 0\leq k\leq n.
        \end{equation}
        In each
        particular case this reads as follows. For the monic Charlier
        polynomials $\Charlier{n}$ with $\mu>0$,
        \begin{equation}\label{eq:fwdCharlier}
        \dif^k\Charlier{n}(x)=[n]_k\,\Charlier{n-k}(x),\quad 0\leq k\leq n,
        \end{equation}
        whereas for the monic Meixner polynomials $\Meixner{n}$ with
        $\gamma>0$, $0<\mu<1$, the lowered family carries a shifted parameter,
        \begin{equation}\label{eq:fwdMeixner}
        \dif^k\Meixner{n}(x)=[n]_k\,M_{n-k}^{\gamma+k,\mu}(x),\quad 0\leq k\leq n.
        \end{equation}

		\item Mehler--Heine asymptotics. In the monic normalisation of
        Table~\ref{tab:families}, Dominici's formulas~\cite{Dominici2015}
        read, for the monic Charlier polynomials $\Charlier{n}$ with $\mu>0$,
        \begin{equation}\label{eq:mhCclassical}
        \lim_{n\to\infty}\frac{(-1)^{n}}{\Gamma(n-z)}\,\Charlier{n}(z)
        =\varphi_C(z),\quad \varphi_C(z)=\frac{e^{\mu}}{\Gamma(-z)},
        \end{equation}
        and, for the monic Meixner polynomials $\Meixner{n}$ with $\gamma>0$,
        $0<\mu<1$,
        \begin{equation}\label{eq:mhMclassical}
        \lim_{n\to\infty}\frac{(\mu-1)^{n}}{\Gamma(n-z)}\,\Meixner{n}(z)
        =\varphi_M(z),\quad
        \varphi_M(z)=\frac{1}{(1-\mu)^{\gamma+z}\,\Gamma(-z)},
        \end{equation}
        both uniformly on compact subsets of $\C$. These are the monic forms
        of~\cite[Eqs.~(23),(35)]{Dominici2015}. Writing $\varphi$ for either
        limit function and $\kappa_n$ for the corresponding sign/scale, both
        relations take the common form
        \begin{equation}\label{eq:mhcommon}
        \lim_{n\to\infty}\frac{1}{\kappa_n\,\Gamma(n-z)}\,P_n(z)=\varphi(z),
        \quad
        \kappa_n=
        \begin{cases}
        (-1)^{n}, & \text{Charlier},\\[2pt]
        (\mu-1)^{-n}, & \text{Meixner}.
        \end{cases}
        \end{equation}
        Both limit functions are entire multiples of $1/\Gamma(-z)$; hence
        their only zeros are the simple zeros of $1/\Gamma(-z)$ at
        $z=0,1,2,\dots$.
    \end{enumerate}
\end{proposition}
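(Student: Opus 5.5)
The plan is to derive everything from the closed-form generating functions \eqref{eq:GCmonic} and \eqref{eq:GMmonic}. I would take them as definitions of the monic families via their coefficients in $t$, and then read off items 1--5 from elementary manipulations. Item 6 is imported from \cite{Dominici2015} and only needs a normalisation check.

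\textbf{Monicity, norms and orthogonality (items 1 and 2).} Expanding $(1-t/\mu)^x=\sum_k [x]_k(-t/\mu)^k/k!$ and multiplying by $e^t$, resp.\ $(1-t)^{-x-\gamma}$, shows that the coefficient of $t^n$ is a polynomial in $x$ of degree $n$. Its leading term is $(-\mu)^{-n}x^n/n!$, resp.\ $\big((\mu-1)/\mu\big)^n x^n/n!$. Hence the $\psi_n$ chosen in item 4 make $P_n$ monic.

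For orthogonality, I would compute the bilinear sum $\sum_{x\ge0}G(x,s)G(x,t)\rho(x)$ in closed form.
\begin{itemize}
\item In the Charlier case it equals $e^{s+t}\sum_x e^{-\mu}\big(\mu(1-s/\mu)(1-t/\mu)\big)^x/x!=e^{st/\mu}$.
\item In the Meixner case the negative binomial sum gives a function of $st$ alone.
\end{itemize}
Since the result depends only on the product $st$, comparing coefficients of $s^mt^n$ yields $\langle P_m,P_n\rangle=0$ for $m\neq n$. The diagonal coefficient, divided by $\psi_n^2/n!^2$, then gives the squared norms of Table~\ref{tab:families}.

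\textbf{Recurrence and forward differences (items 3 and 5).} For the recurrence, I would differentiate $G$ with respect to $t$. In the Charlier case this gives $\partial_tG=\big(1-x/(\mu-t)\big)G$, and multiplying by $(\mu-t)$ and comparing coefficients yields a three-term relation. Translating through $\psi_n$ gives $\alpha_n=n+\mu$ and $\beta_n=n\mu$. The Meixner case is the same computation with the rational factor $\big(-x/(\mu-t)+(x+\gamma)/(1-t)\big)$, after clearing the denominators $(\mu-t)(1-t)$.

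For item 5, apply $\dif_x$ to the generating function:
\begin{itemize}
\item $\dif_x G_C=(-t/\mu)G_C$, so $\dif P_n=nP_{n-1}$ once the $\psi$'s are accounted for;
\item $\dif_x G_M=\big((1-t/\mu)(1-t)^{-1}-1\big)G_M=\frac{t(\mu-1)}{\mu(1-t)}G_M$, and the extra factor $(1-t)^{-1}$ turns $G_M^{\gamma}$ into $G_M^{\gamma+1}$. This gives $\dif M_n^{(\gamma,\mu)}=nM_{n-1}^{(\gamma+1,\mu)}$.
\end{itemize}
Iterating $k$ times gives \eqref{eq:fwdgeneral}, with the falling factorial $[n]_k$ arising from $n(n-1)\cdots(n-k+1)$.

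\textbf{Mehler--Heine asymptotics (item 6).} This step is the main obstacle, since it is not derived from the earlier items but from \cite{Dominici2015}. The work is bookkeeping. Dominici's formulas are stated for the standard normalisations $C_n(x;\mu)$ and $M_n(x;\gamma,\mu)$, so I would convert them using the leading coefficients:
\begin{equation*}
C_n=(-\mu)^{-n}\Charlier{n},\qquad M_n=\Big(\frac{\mu-1}{\mu}\Big)^n\frac{1}{(\gamma)_n}\Meixner{n}.
\end{equation*}
Rescaling the limits then shows that the normalisation becomes $\kappa_n\Gamma(n-z)$.

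To make this plausible independently, I would also check the Charlier limit heuristically. Cauchy's formula on \eqref{eq:GCmonic} gives
\begin{equation*}
\Charlier{n}(z)=(-\mu)^n\,\frac{n!}{2\pi i}\oint e^t(1-t/\mu)^z t^{-n-1}\,dt .
\end{equation*}
Deforming the contour around the branch point $t=\mu$ produces a beta-type integral, which yields $(-1)^n\Gamma(n-z)e^{\mu}/\Gamma(-z)$ to leading order.

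Finally, uniformity on compact sets follows from the uniformity in Dominici's statement. The zeros of $\varphi_C$ and $\varphi_M$ are those of $1/\Gamma(-z)$, because the prefactors $e^{\mu}$ and $(1-\mu)^{-\gamma-z}$ never vanish.
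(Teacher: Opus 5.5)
Your proposal is correct, but it takes a different route from the paper. The paper gives no proof of Proposition~\ref{prop:classical}: items 1--5 are quoted as standard facts from \cite{Ismail05,KLS2010,nikiforov1991classical}, and item 6 is Dominici's result \cite{Dominici2015} rewritten in monic form.

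You instead derive items 1--5 from the closed forms \eqref{eq:GCmonic}--\eqref{eq:GMmonic}, and the computations do close:
\begin{itemize}
\item The bilinear sums are $e^{st/\mu}$ and $(1-\mu)^{-\gamma}(1-st/\mu)^{-\gamma}$, and they return exactly the tabulated norms.
\item In the Meixner recurrence the $xt$ terms cancel, leaving $(\mu-t)(1-t)\partial_tG_M=\bigl(x(\mu-1)+\gamma\mu-\gamma t\bigr)G_M$, which yields the tabulated $\alpha_n,\beta_n$.
\item $\dif_xG_M=\frac{(\mu-1)t}{\mu(1-t)}G_M$ gives the parameter-shifted rule \eqref{eq:fwdMeixner}, which is how \eqref{eq:fwdgeneral} must be read in the Meixner case.
\end{itemize}

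What your route buys is a self-contained check that the specific normalisations ($\psi_n$, norms, recurrence and shift coefficients) are mutually consistent. The paper takes this on trust from sources that use other normalisations. The price is a few routine points you should state explicitly:
\begin{itemize}
\item The Meixner leading coefficient receives contributions from every term $[x]_k(x+\gamma)_{n-k}$, not only $k=n$, and equals $\frac{1}{n!}\bigl(1-\frac{1}{\mu}\bigr)^n$ by the binomial theorem.
\item Exchanging $\sum_x$ with the double series in $s,t$ needs absolute convergence for small $|s|,|t|$. Use the majorants $e^{|s|}(1+|s|/\mu)^x$ and $(1+|s|/\mu)^x(1-|s|)^{-x-\gamma}$ respectively.
\item Identifying your coefficient-defined $P_n$ with the families of Table~\ref{tab:families} uses uniqueness of the monic orthogonal sequence for $\rho$.
\end{itemize}

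For item 6 both routes rest on \cite{Dominici2015}, but your contour heuristic can be upgraded to a proof by Darboux's method. At the dominant singularity $t=\mu$ one has $G_C\approx e^{\mu}(1-t/\mu)^z$ and $G_M\approx(1-\mu)^{-z-\gamma}(1-t/\mu)^z$. Their $n$-th coefficients are multiples of $(-z)_n\mu^{-n}/n!$, and multiplying by $n!/\psi_n$ gives exactly $\kappa_n\Gamma(n-z)\varphi(z)$. For Charlier you can avoid asymptotic transfer altogether: $(-1)^n\Charlier{n}(z)/\Gamma(n-z)=\sum_{m=0}^{n}\binom{n}{m}\mu^m(-z)_{n-m}/\Gamma(n-z)$, and the $m$-th term tends to $\mu^m/(m!\,\Gamma(-z))$, so a dominated-convergence argument gives $e^{\mu}/\Gamma(-z)$.
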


Concerning Charlier and Meixner families of orthogonal polynomials, a Christoffel-Darboux formula is also available.

\begin{proposition}[Christoffel-Darboux formula]\label{pro:CDF}
	Let $\{P_{n}\}_{n\geq 0}$ be one of the two sequences of monic classical discrete polynomials of degree $n$ determined in Table~\ref{tab:families}. Let $K_n$ denote the $n$-th reproducing kernel, defined by 
	\begin{equation}\label{eq:kerneldef}
	K_{n}(x,y)=\sum_{k=0}^{n}\frac{P_{k}(x)P_{k}(y)}{||P_{k}||^{2}}.
	\end{equation}
	Then, for all $n\in \mathbb{N}$, it holds that 
	\begin{equation}
	K_{n}(x,y)=\frac{P_{n+1}(x) P_{n}(y) -P_{n+1}(y)P_{n}(x) }{\left(
		x-y\right) ||P_{n}||^{2}}.  \label{CDarb}
	\end{equation}
\end{proposition}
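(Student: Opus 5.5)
The plan is the classical telescoping argument driven by the three-term recurrence \eqref{eq:ttrr}. The one extra ingredient needed from Table~\ref{tab:families} is the norm ratio
\begin{equation*}
\frac{\norm{P_k}^2}{\norm{P_{k-1}}^2}=\beta_k ,\qquad k\ge 1 .
\end{equation*}
I will check this directly in each column. For Charlier the ratio is $k\mu$. For Meixner it is $k(\gamma+k-1)\mu/(1-\mu)^2$, which agrees with $\beta_k$ since $(\mu-1)^2=(1-\mu)^2$. Alternatively, it follows from item 1 of Proposition~\ref{prop:classical} by pairing \eqref{eq:ttrr} with $P_{k-1}$ and using orthogonality.

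With this in hand, I multiply \eqref{eq:ttrr} at degree $k$, written in the variable $x$, by $P_k(y)$. I multiply the same relation written in $y$ by $P_k(x)$ and subtract. The $\alpha_k$ terms cancel, and dividing by $\norm{P_k}^2$ gives
\begin{equation*}
(x-y)\frac{P_k(x)P_k(y)}{\norm{P_k}^2}=\frac{P_{k+1}(x)P_k(y)-P_k(x)P_{k+1}(y)}{\norm{P_k}^2}-\frac{\beta_k}{\norm{P_k}^2}\bigl(P_{k-1}(x)P_k(y)-P_k(x)P_{k-1}(y)\bigr).
\end{equation*}
Set
\begin{equation*}
D_k=\frac{P_{k+1}(x)P_k(y)-P_k(x)P_{k+1}(y)}{\norm{P_k}^2}.
\end{equation*}
The norm identity turns $\beta_k/\norm{P_k}^2$ into $1/\norm{P_{k-1}}^2$. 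The last term is therefore exactly $D_{k-1}$, so the right-hand side equals $D_k-D_{k-1}$ for $k\ge1$.

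For $k=0$, the convention $P_{-1}\equiv0$ (equivalently $\beta_0=0$) gives the right-hand side $D_0$ directly. Summing over $k=0,\dots,n$ then telescopes to
\begin{equation*}
(x-y)K_n(x,y)=D_n .
\end{equation*}
Dividing by $x-y$ for $x\ne y$ yields \eqref{CDarb}. On the diagonal the formula is read as the removable limit; the numerator vanishes at $x=y$, so $D_n/(x-y)$ is a polynomial.

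The only step needing care is the norm ratio identity, together with keeping the sign of the Meixner $\beta_n$ consistent with its $(\mu-1)^2$ denominator. Everything else is routine bookkeeping.
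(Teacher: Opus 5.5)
Your argument is correct in substance and is the standard one. The paper gives no proof of this proposition at all; it quotes the Christoffel--Darboux formula as a classical fact. Your telescoping via \eqref{eq:ttrr} together with $\beta_k=\norm{P_k}^2/\norm{P_{k-1}}^2$ (which you rightly check column by column against Table~\ref{tab:families}) is precisely the textbook proof behind that citation.

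There is, however, a sign slip in your displayed identity. Subtracting the two recurrences gives $+\beta_k\bigl(P_{k-1}(x)P_k(y)-P_k(x)P_{k-1}(y)\bigr)$, not minus, so the display should read
\begin{equation*}
(x-y)\frac{P_k(x)P_k(y)}{\norm{P_k}^2}=D_k-\frac{\beta_k}{\norm{P_k}^2}\bigl(P_k(x)P_{k-1}(y)-P_{k-1}(x)P_k(y)\bigr).
\end{equation*}
As you wrote it, the bracket divided by $\norm{P_{k-1}}^2$ equals $-D_{k-1}$, so your right-hand side would be $D_k+D_{k-1}$. This is already false for $k=1$ in the Charlier case. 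Your next sentence (``the last term is exactly $D_{k-1}$'') makes a compensating sign error, which is why you still land on the correct $D_k-D_{k-1}$.

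With the bracket reversed as above, that sentence becomes literally true. The telescoping to $(x-y)K_n(x,y)=D_n$, the $k=0$ case via $P_{-1}\equiv0$, and the removable-singularity reading on the diagonal all stand unchanged.
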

Let us also fix the following notation on the iterative application of the forward difference operator $K_{n}(x,y)$ with respect to each variable: for all $(i,j)\in\N^2$ we write
\begin{equation}
K_{n}^{(i,j)}(x,y)=\Delta_{x}^{i}\left( \Delta_{y}^{j}K
_{n}\left( x,y\right) \right) =\sum_{k=0}^{n}\frac{%
	\Delta^{i}P_{k}(x)\Delta^{j}P_{k}(y)}{\left\Vert
	P_{k}\right\Vert ^{2}}.  \label{eq:Kij}
\end{equation}
The following result can be proved by adapting the argument of \cite[Proposition 3]%
{Costas-2022} with only minor modifications. We therefore give only a brief proof.
\begin{proposition}\label{S1-LemmaKernel0j}
Let $\{P_{n}\}_{n\geq 0}$ be as above.The following statement holds for
	every $n\in \mathbb{N}$, 
	\begin{equation}
	K_{n-1}^{\left( 0,j\right) }\left( x,y\right) = {\mathcal{A}}_n^{(j)}(x,y)P_{n}\left( x\right)+{\mathcal{B}}_n^{(j)}(x,y)P_{n-1}\left( x\right),  \label{eq:K0j}
	\end{equation}
	with
	\begin{equation}\label{eq:Aj}
	{\mathcal{A}}_n^{(j)}(x,y) = \frac{j!}{\left\Vert P_{n-1}\right\Vert
		^{2}[x-y]_{j+1}}\sum_{k=0}^j\frac{\Delta^{k}P_{n-1}\left(
		y\right) }{k!}[x-y]_{k},
	\end{equation}
	and 
	\begin{equation}\label{eq:Bj}
	{\mathcal{B}}_n^{(j)}(x,y) = -\frac{j!}{\left\Vert P_{n-1}\right\Vert
		^{2}[x-y]_{j+1}}\sum_{k=0}^j\frac{\Delta^{k}P_{n}\left(
		y\right) }{k!}[x-y]_{k}.
	\end{equation}
\end{proposition}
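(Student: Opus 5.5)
Starting from the Christoffel--Darboux formula of Proposition~\ref{pro:CDF} with $n$ replaced by $n-1$,
\begin{equation*}
K_{n-1}(x,y)=\frac{P_{n}(x)P_{n-1}(y)-P_{n}(y)P_{n-1}(x)}{(x-y)\,\norm{P_{n-1}}^{2}},
\end{equation*}
we apply $\dif_y^{j}$ to both sides. We regard $x$ as a fixed parameter, so $P_n(x)$ and $P_{n-1}(x)$ are constants for this operator. By linearity,
\begin{equation*}
K^{(0,j)}_{n-1}(x,y)=\frac{1}{\norm{P_{n-1}}^{2}}\Bigl(P_n(x)\,\dif_y^{j}\Bigl[\frac{P_{n-1}(y)}{x-y}\Bigr]-P_{n-1}(x)\,\dif_y^{j}\Bigl[\frac{P_{n}(y)}{x-y}\Bigr]\Bigr).
\end{equation*}
It then suffices to prove the single identity
\begin{equation}\tag{$\ast$}
\dif_y^{j}\Bigl[\frac{f(y)}{x-y}\Bigr]=\frac{j!}{[x-y]_{j+1}}\sum_{k=0}^{j}\frac{\dif^{k}f(y)}{k!}[x-y]_{k}
\end{equation}
for an arbitrary function $f$. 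Taking $f=P_{n-1}$ gives $\Aop_n^{(j)}$, and taking $f=P_n$ with the minus sign gives $\Bop_n^{(j)}$, exactly as in \eqref{eq:Aj}--\eqref{eq:Bj}.

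To prove $(\ast)$, we use the discrete Leibniz rule. Iterating the product rule \eqref{eq:prodrule}, or writing $\dif=E-I$ with $E$ the shift and expanding, gives
\begin{equation*}
\dif^{j}[f g](y)=\sum_{k=0}^{j}\binom{j}{k}\dif^{k}f(y)\,\dif^{j-k}g(y+k).
\end{equation*}
We take $g(y)=1/(x-y)$. The basic computation is the forward difference of reciprocal falling-factorial type expressions. Setting $u=x-y$, a shift $y\mapsto y+1$ becomes $u\mapsto u-1$. A direct induction then gives
\begin{equation*}
\dif_y^{m}\frac{1}{x-y}=\frac{m!}{(x-y)(x-y-1)\cdots(x-y-m)}=\frac{m!}{[x-y]_{m+1}}.
\end{equation*}
The inductive step is the telescoping identity
\begin{equation*}
\frac{1}{[u-1]_{m+1}}-\frac{1}{[u]_{m+1}}=\frac{(m+1)}{[u]_{m+2}},
\end{equation*}
which follows from $[u]_{m+2}=u\,[u-1]_{m+1}$ and $[u]_{m+1}=[u-1]_{m}\,u$.

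Next we evaluate $g$ at the shifted point $y+k$. Substituting, $\dif^{j-k}g(y+k)=(j-k)!/[x-y-k]_{j-k+1}$. The identity $[x-y]_{j+1}=[x-y]_{k}\,[x-y-k]_{j-k+1}$ then lets us factor out the common denominator:
\begin{equation*}
\binom{j}{k}\frac{(j-k)!}{[x-y-k]_{j-k+1}}=\frac{j!}{k!}\cdot\frac{[x-y]_k}{[x-y]_{j+1}}.
\end{equation*}
Summing over $k$ yields $(\ast)$.

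The main obstacle is bookkeeping rather than anything conceptual. Two points need care. First, the shifts $y+k$ in the discrete Leibniz rule must be placed on the $g$ factor; placing them on $f$ would produce differences of $f$ at shifted points, which is not the form required. Second, the falling-factorial splitting has to come out exactly as $[x-y]_k\,[x-y-k]_{j-k+1}$. To settle both, we would first check the case $j=1$ directly against \eqref{eq:prodrule}. All identities hold for $x-y\notin\{0,1,\dots,j\}$, and the polynomial identity \eqref{eq:K0j} then extends to all $(x,y)$ by continuity.
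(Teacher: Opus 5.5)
Your proof is correct and is essentially the argument the paper defers to (its adaptation of Proposition 3 of Costas-Santos, Soria-Lorente and Vilaire, the paper itself printing no proof): you apply $\Delta_y^{j}$ to the Christoffel--Darboux formula at index $n-1$ and use the discrete Leibniz rule with the shifts on the factor $1/(x-y)$, together with $\Delta_y^{m}(x-y)^{-1}=m!/[x-y]_{m+1}$ and the splitting $[x-y]_{j+1}=[x-y]_k\,[x-y-k]_{j-k+1}$. One point to tidy is your closing remark: $\mathcal{A}_n^{(j)}$ and $\mathcal{B}_n^{(j)}$ are in general singular when $x-y\in\{0,1,\dots,j\}$, so the identity should be stated for $[x-y]_{j+1}\neq0$ (which is how the paper uses it), and only the combination $\mathcal{A}_n^{(j)}P_n(x)+\mathcal{B}_n^{(j)}P_{n-1}(x)$ extends continuously.
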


We now introduce the discrete Sobolev-type polynomials of order $j$,
$\{\Sob{n}(x)\}_{n\ge0}$, orthogonal with respect to the Sobolev-type
inner product
\begin{equation}\label{eq:innerprod}
\inner{f}{g}_{\lambda}
=\sum_{x\ge0}f(x)g(x)\rho(x)+\lambda\,\dif^{j}f(\alpha)\,\dif^{j}g(\alpha),
\end{equation}
where $\alpha\in\R_{-}$, $\lambda\in\R^{+}$ and $j\in\N$ with $j\ge1$ are
fixed, and
$\rho$ is the weight of the chosen column of Table~\ref{tab:families}. As before, $P_n$ denotes the
underlying classical monic family and all notation of
Section~\ref{sec:prelim} is retained.

The link between the Sobolev-type polynomials $\Sob{n}$ and the classical
polynomials $P_n$ is the following connection formula, which underlies
all subsequent constructions.

\begin{proposition}\label{prop:connection}
Let $\{\Sob{n}\}_{n\ge0}$ be the monic Sobolev-type family for the inner
product~\eqref{eq:innerprod}. Then, for $n\ge1$,
\begin{equation}\label{eq:ConxF1}
\Sob{n}(x)=P_{n}(x)
-\lambda\,\frac{\dif^{j}P_{n}(\alpha)}
{1+\lambda\,\Kn_{n-1}^{(j,j)}(\alpha,\alpha)}\,
\Kn_{n-1}^{(0,j)}(x,\alpha).
\end{equation}
\end{proposition}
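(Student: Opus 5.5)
Since $\Sob{n}-P_n$ has degree at most $n-1$, I will expand it in the classical basis and identify the coefficients using orthogonality in both inner products. Write
\begin{equation*}
\Sob{n}(x)=P_n(x)+\sum_{k=0}^{n-1}c_{n,k}P_k(x).
\end{equation*}
For $0\le k\le n-1$, Sobolev orthogonality gives $\inner{\Sob{n}}{P_k}_{\lambda}=0$. Expanding this with \eqref{eq:innerprod} and the classical orthogonality of Proposition~\ref{prop:classical}, the standard part reduces to $c_{n,k}\norm{P_k}^2$. The Sobolev part contributes $\lambda\,\dif^j\Sob{n}(\alpha)\,\dif^jP_k(\alpha)$. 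Hence
\begin{equation*}
c_{n,k}=-\lambda\,\dif^j\Sob{n}(\alpha)\,\frac{\dif^jP_k(\alpha)}{\norm{P_k}^2}.
\end{equation*}
Summing over $k$ and using the definition \eqref{eq:Kij} of $\Kn^{(0,j)}_{n-1}(x,\alpha)$, I obtain the implicit identity
\begin{equation*}
\Sob{n}(x)=P_n(x)-\lambda\,\dif^j\Sob{n}(\alpha)\,\Kn^{(0,j)}_{n-1}(x,\alpha).
\end{equation*}

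The remaining step is to eliminate the unknown scalar $\dif^j\Sob{n}(\alpha)$. I apply $\dif^j_x$ to the implicit identity and evaluate at $x=\alpha$. By \eqref{eq:Kij}, $\dif_x^j\Kn^{(0,j)}_{n-1}(x,\alpha)\big|_{x=\alpha}=\Kn^{(j,j)}_{n-1}(\alpha,\alpha)$, so
\begin{equation*}
\dif^j\Sob{n}(\alpha)\bigl(1+\lambda\Kn^{(j,j)}_{n-1}(\alpha,\alpha)\bigr)=\dif^jP_n(\alpha).
\end{equation*}
The factor in parentheses is at least $1$, because $\lambda>0$ and $\Kn^{(j,j)}_{n-1}(\alpha,\alpha)=\sum_k(\dif^jP_k(\alpha))^2/\norm{P_k}^2\ge0$. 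Dividing by it and substituting back into the implicit identity yields \eqref{eq:ConxF1}.

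The only point needing care is existence and uniqueness of the monic $\Sob{n}$, i.e.\ that the formula really defines the Sobolev-orthogonal polynomial. This follows because \eqref{eq:innerprod} is positive definite on $\PP$: its first term is already positive definite and $\lambda>0$. Conversely, the right-hand side of \eqref{eq:ConxF1} is monic of degree $n$, and the same computation read backwards shows it is $\inner{\cdot}{\cdot}_\lambda$-orthogonal to every $P_k$ with $k<n$. So there is no real obstacle; the main step is the elimination of $\dif^j\Sob{n}(\alpha)$, and I would present the argument essentially as above.
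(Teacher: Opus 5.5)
Your proposal is correct and follows essentially the same route as the paper, which simply invokes the Marcell\'an--Ronveaux argument. That argument expands $\Sob{n}$ in the classical basis, uses Sobolev orthogonality to express the Fourier coefficients through $\dif^{j}\Sob{n}(\alpha)$ and the kernel $\Kn^{(0,j)}_{n-1}(x,\alpha)$, and then applies $\dif^{j}$ at $x=\alpha$ to solve for that scalar. Your write-up also makes explicit two points the paper's brief proof leaves implicit: the elimination step, and the positivity of $1+\lambda\Kn^{(j,j)}_{n-1}(\alpha,\alpha)$, which justifies the division.
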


\begin{proof}
The argument of~\cite[Section~2]{MR1990} applies verbatim: expanding
$\Sob{n}$ in the basis $\{P_k\}_{k\le n}$ and imposing orthogonality with
respect to~\eqref{eq:innerprod}, all Fourier coefficients but the one
carrying the mass term vanish, and the reproducing property of the
kernel~\eqref{eq:kerneldef} yields~\eqref{eq:ConxF1}.
\end{proof}

For convenience, we introduce the abbreviated notation
\begin{equation}\label{eq:SobCon}
    \Sob{n}(x)=P_{n}(x)-\lambda\,\Omega_{n}\,\Kn_{n-1}^{(0,j)}(x,\alpha),
\end{equation}
where
\begin{equation}\label{eq:Omega}
\Omega_{n}=\frac{\dif^{j}P_{n}(\alpha)}
{1+\lambda\,\Kn_{n-1}^{(j,j)}(\alpha,\alpha)}=\frac{\dif^{j}P_{n}(\alpha)}
{\delta_n}.
\end{equation}


\section{The Sobolev generating function}\label{sec:formalGF}

This section establishes the generating function of the Sobolev-type
polynomials and settles its analytic status. The guiding idea is simple to
state. The connection formula~\eqref{eq:Omega} writes each $\Sob{n}$ as the
classical $P_n$ minus a rank-one Sobolev correction; summing against
$t^{n}$ should therefore present the Sobolev generating function as the
classical one minus a correction series. The Sobolev-type generating function is defined with the
\emph{same} normalising sequence,
\begin{equation}\label{eq:GFdef}
G_S(x;t)=\sum_{n\ge0}\frac{\psi_n}{n!}\,\Sob{n}(x)\,t^{n}.
\end{equation}
Using a common $\psi_n/n!$ for both families places $G_S$ and $G_P$ on the
same footing, so that the connection formula~\eqref{eq:Omega} transcribes
into a clean identity between the two series and the closed forms
\eqref{eq:GCmonic}--\eqref{eq:GMmonic} are available on the common disc
$|t|<\mu$.

\begin{lemma}\label{lem:cauchy}
Let $\{P_n\}$ be the monic Charlier family, $\mu>0$, and
$\psi_n=(-\mu)^{-n}$ the normalising sequence of~\eqref{eq:GCmonic}. For
every $x\in\C$ and $0<r<\mu$,
\begin{equation}\label{eq:cauchybound}
\frac{|\psi_n|\,|P_n(x)|}{n!}=\frac{|P_n(x)|}{\mu^{n}\,n!}\le \frac{B_r(x)}{r^{\,n}},\quad
B_r(x)=e^{r}\Bigl(\tfrac{\mu}{\mu-r}\Bigr)^{\lvert\Re x\rvert}
e^{\,\frac{r}{\mu-r}\lvert\Im x\rvert},
\end{equation}
with $B_r(x)$ continuous in $x$ and independent of $n$. Equivalently,
\begin{equation*}
    \frac{|P_n(x)|}{n!}\le B_r(x)\left(\frac{\mu}{r}\right)^{n}.
\end{equation*}
\end{lemma}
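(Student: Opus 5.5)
The plan is to apply Cauchy's coefficient estimate to the closed form~\eqref{eq:GCmonic}. For fixed $x\in\C$, the function $t\mapsto G_C(x,t)=e^{t}(1-t/\mu)^{x}$ is holomorphic on the disc $|t|<\mu$. Here we take the principal branch $(1-t/\mu)^{x}=\exp\bigl(x\,\mathrm{Log}(1-t/\mu)\bigr)$; this branch is well defined because $\Re(1-t/\mu)>0$ on the disc. By~\eqref{eq:GCmonic}, its Taylor coefficients are $\psi_nP_n(x)/n!=(-\mu)^{-n}P_n(x)/n!$.

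Cauchy's inequality on the circle $|t|=r<\mu$ then gives
\begin{equation*}
\frac{|P_n(x)|}{\mu^{n}n!}\le \frac{1}{r^{n}}\max_{|t|=r}\bigl|G_C(x,t)\bigr|.
\end{equation*}
So everything reduces to showing $\max_{|t|=r}|G_C(x,t)|\le B_r(x)$.

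To bound the maximum, write $w=1-t/\mu$ and $L=\mathrm{Log}\,w=\ln|w|+i\arg w$, so that
\begin{equation*}
|G_C(x,t)|=e^{\Re t}\,\exp\bigl(\Re x\,\ln|w|-\Im x\,\arg w\bigr).
\end{equation*}
We bound the three factors separately.
\begin{itemize}
\item The first factor satisfies $e^{\Re t}\le e^{r}$.
\item For the modulus term, $1-r/\mu\le|w|\le 1+r/\mu$. This gives $|\ln|w||\le\max\{\ln\frac{\mu}{\mu-r},\ln(1+r/\mu)\}=\ln\frac{\mu}{\mu-r}$, where the last equality holds because $(1+s)(1-s)\le1$. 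Hence $\exp(\Re x\ln|w|)\le(\mu/(\mu-r))^{|\Re x|}$.
\item For the argument term, use $|\arg w|\le|\mathrm{Log}\,w|=\bigl|\sum_{k\ge1}(t/\mu)^k/k\bigr|\le\sum_{k\ge1}(r/\mu)^k=\frac{r}{\mu-r}$. This yields $\exp(-\Im x\arg w)\le e^{\frac{r}{\mu-r}|\Im x|}$.
\end{itemize}
Multiplying the three estimates gives exactly $B_r(x)$. The function $B_r(x)$ is visibly continuous in $x$ and independent of $n$, and the equivalent form of the statement follows by multiplying through by $\mu^{n}$.

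The only delicate point is the argument bound. The crude estimate $|\arg w|\le\arcsin(r/\mu)$ would also suffice, but it would not reproduce the stated constant. The series bound $|\mathrm{Log}(1-s)|\le -\ln(1-|s|)\le |s|/(1-|s|)$ is the one matching $\frac{r}{\mu-r}$, and I would use it. I would also check that the principal branch agrees with the branch implicit in~\eqref{eq:GCmonic}. At $t=0$ it gives the value $1$, and the expansion $P_0=1$ is consistent with that, so the identification of the Taylor coefficients is legitimate.
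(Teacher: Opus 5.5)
Your proposal is correct and follows the paper's proof essentially step for step. Both arguments use Cauchy's estimate on $|t|=r$, the bound $|e^{t}|\le e^{r}$, the modulus envelope $1-s\le|w|\le 1+s\le(1-s)^{-1}$ with $s=r/\mu$, and an argument bound $|\arg w|\le r/(\mu-r)$. The only variation is in how you get that last bound: you derive it from the logarithm series, $|\arg w|\le|\mathrm{Log}\,w|\le s/(1-s)$, whereas the paper uses the tangent-line estimate $|\theta|\le\arcsin s$ followed by the elementary inequality $\arcsin s\le s/(1-s)$. So, contrary to your remark, the $\arcsin$ route does reproduce the stated constant, and in any case a sharper bound implies the stated one.
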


\begin{proof}
Here $t$ denotes a complex variable. By the monic generating
function~\eqref{eq:GCmonic}, the numbers
\begin{equation*}
    \frac{\psi_n P_n(x)}{n!},
\end{equation*}
are the Taylor coefficients at $t=0$ of
\begin{equation*}
    F(t,x)=G_C(x,t)=e^t\left(1-\frac{t}{\mu}\right)^x,
\end{equation*}
regarded, for each fixed $x\in\C$, as a function of $t\in\C$. This function is
holomorphic in the disc $|t|<\mu$: the factor $e^{t}$ is entire, and
\begin{equation*}
    1-\frac{t}{\mu}\neq0,\quad |t|<\mu,
\end{equation*}
so the principal logarithm, and with it the
power
\begin{equation*}
    \left(1-\frac{t}{\mu}\right)^{x}=e^{x\operatorname{Log}(1-t/\mu)},
\end{equation*}
is analytic there.
The proof therefore reduces to Cauchy's coefficient estimate, for which it suffices
to bound $F(\cdot,x)$ uniformly on a circle $|t|=r$ with $0<r<\mu$.

Fix such an $r$. Cauchy's estimate on the circle $|t|=r$ yields
\begin{equation}\label{eq:cauchystep}
\frac{|\psi_n|\,|P_n(x)|}{n!}\le \frac{1}{r^{\,n}}\,\max_{|t|=r}\bigl|F(t,x)\bigr|,
\end{equation}
so it remains to estimate
\begin{equation*}
    |F(t,x)|=|e^{t}|\,\left|\left(1-\frac{t}{\mu}\right)^{x}\right|,
\end{equation*}
on $|t|=r$. The exponential
factor is immediate: since $\Re t\le|t|=r$, we have $|e^{t}|\le e^{r}$.

For the power factor, write $1-t/\mu=\rho\,e^{i\theta}$ in polar form, with
$\rho=|1-t/\mu|$ and $\theta=\arg(1-t/\mu)$. As $t$ traverses
$|t|=r$, the point $1-t/\mu$ traverses the circle $C$ of radius
$s=r/\mu$ centred at $1$, which lies in the right half-plane because
$s<1$. Two geometric facts about $C$ control $\rho$ and $\theta$:
\begin{equation}\label{eq:rhotheta}
1-s\le\rho\le 1+s,
\quad
|\theta|\le \arcsin s .
\end{equation}
The bounds on $\rho$ express that the distance from the origin to a point of $C$
lies between $1-s$ and $1+s$. The bound on $\theta$ follows because, among the
points of $C$, the argument attains its extreme values at the two points where a
line through the origin is tangent to $C$; for such a tangent line the radius $s$
and the centre-distance $1$ subtend the half-angle $\arcsin s$. Using the elementary
inequality
\begin{equation}\label{eq:arcsinineq}
   \arcsin s\le\frac{s}{1-s},\quad 0\le s<1,
\end{equation}
with $s=r/\mu<1$, we sharpen the last estimate to
\begin{equation}\label{eq:thetabound}
|\theta|\le \frac{s}{1-s}=\frac{r}{\mu-r}.
\end{equation}
Writing $x=\Re x+i\,\Im x$ and taking the real part of
$x\operatorname{Log}(1- t/\mu)=x(\log\rho+i\theta)$,
\begin{equation}\label{eq:powerfactor}
\left|\left(1-\frac{t}{\mu}\right)^{x}\right|
=\bigl|e^{x(\log\rho+i\theta)}\bigr|
=e^{\,\Re x\,\log\rho-\Im x\,\theta}
=\rho^{\,\Re x}\,e^{-\theta\,\Im x}.
\end{equation}
We bound the two factors separately. Since $\log\rho$ may have either sign, we
exploit the symmetry of the envelope~\eqref{eq:rhotheta}. From
\begin{equation*}
    (1+s)(1-s)=1-s^{2}\le1,
\end{equation*}
it follows that $1+s\le(1-s)^{-1}$, hence
\begin{equation}\label{eq:rhoenvelope}
    \frac{\mu-r}{\mu}=1-s\le\rho\le 1+s\le\frac{1}{1-s}=\frac{\mu}{\mu-r},
\end{equation}
and therefore
\begin{equation*}
    |\log\rho|\le\log\left(\frac{\mu}{\mu-r}\right).
\end{equation*}
Consequently,
\begin{equation}\label{eq:modpart}
\rho^{\,\Re x}=e^{(\Re x)\log\rho}\le e^{|\Re x|\,|\log\rho|}
\le\left(\frac{\mu}{\mu-r}\right)^{|\Re x|}.
\end{equation}
For the second factor, \eqref{eq:thetabound} gives
\begin{equation*}
    |\theta\,\Im x|=|\theta|\,|\Im x|\le \frac{r}{\mu-r}\,|\Im x|,
\end{equation*}
whence
\begin{equation}\label{eq:argpart}
e^{-\theta\,\Im x}\le e^{|\theta\,\Im x|}\le e^{\frac{r}{\mu-r}|\Im x|}.
\end{equation}
Combining $|e^{t}|\le e^{r}$ with~\eqref{eq:powerfactor}, \eqref{eq:modpart}
and~\eqref{eq:argpart} yields
\begin{equation*}
    \max_{|t|=r}\bigl|F(t,x)\bigr|
    \le e^{r}\left(\frac{\mu}{\mu-r}\right)^{|\Re x|}e^{\frac{r}{\mu-r}|\Im x|}=B_r(x),
\end{equation*}
and substitution into~\eqref{eq:cauchystep} gives~\eqref{eq:cauchybound}. Finally,
$B_r(x)$ is a power of $\tfrac{\mu}{\mu-r}$ in $|\Re x|$ times an exponential in
$|\Im x|$; it is therefore continuous in $x$ and independent of $n$.
\end{proof}

\begin{lemma}
\label{lem:pointwise}
Let $\{P_n\}$ be the monic Charlier family, $\alpha<0$, $j\ge1$. There exist
constants $0<c_0\le c$, depending only on $\alpha$, such that, for all $n\ge1$,
\begin{equation}\label{eq:Pmtwosided}
c_0\,\Gamma(n-\alpha)\le|P_n(\alpha)|\le c\,\Gamma(n-\alpha).
\end{equation}
\end{lemma}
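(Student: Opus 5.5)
The plan is to combine the classical Mehler--Heine formula \eqref{eq:mhCclassical} with the fact that its limit does not vanish at $z=\alpha$. Evaluating \eqref{eq:mhCclassical} at the fixed point $z=\alpha<0$ gives
\begin{equation*}
\lim_{n\to\infty}\frac{(-1)^n P_n(\alpha)}{\Gamma(n-\alpha)}=\varphi_C(\alpha)=\frac{e^{\mu}}{\Gamma(-\alpha)}.
\end{equation*}
Since $\alpha<0$, we have $-\alpha>0$. Hence $\Gamma(-\alpha)$ is finite and positive, and $\varphi_C(\alpha)$ is a strictly positive real number. This is exactly where $\alpha<0$ is used: the zeros of $1/\Gamma(-z)$ lie at $z=0,1,2,\dots$, which are excluded. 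Consequently there is an $N$ with
\begin{equation*}
\tfrac12\varphi_C(\alpha)\,\Gamma(n-\alpha)\le |P_n(\alpha)|\le 2\varphi_C(\alpha)\,\Gamma(n-\alpha),\quad n\ge N.
\end{equation*}

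For the finitely many indices $1\le n<N$ I need two facts.

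\emph{The ratios are finite.} $\Gamma(n-\alpha)$ is finite and positive for $n\ge1$ because $n-\alpha>0$.

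\emph{The ratios are nonzero.} $P_n(\alpha)\neq0$ because all zeros of the Charlier polynomials are real and lie in the convex hull of the support, namely $(0,\infty)$, while $\alpha<0$. Alternatively, one can argue by sign. The explicit representation $P_n(x)=\sum_{k}\binom{n}{k}(-\mu)^{n-k}[x]_k$ at $x=\alpha<0$ gives $[\alpha]_k=(-1)^k(-\alpha)_k$, so every term has sign $(-1)^n$. Thus $(-1)^nP_n(\alpha)\ge \mu^n>0$.

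Taking $c_0$ as the minimum and $c$ as the maximum of $\tfrac12\varphi_C(\alpha)$, $2\varphi_C(\alpha)$ and the finitely many ratios $|P_n(\alpha)|/\Gamma(n-\alpha)$ for $n<N$ yields \eqref{eq:Pmtwosided} with $0<c_0\le c$. These constants depend only on $\alpha$, with $\mu$ regarded as fixed; $j$ plays no role.

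The only delicate point is justifying that the limit is strictly positive, i.e.\ that \eqref{eq:mhCclassical} holds pointwise at the real point $\alpha$. Uniform convergence on compacts gives this directly. The explicit sign argument also provides the lower bound for small $n$ without any zero-location theory, so I expect no real obstacle.
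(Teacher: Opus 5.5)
Your proposal is correct and follows essentially the same route as the paper. You evaluate the Mehler--Heine limit at $z=\alpha$, where it equals the positive number $e^{\mu}/\Gamma(-\alpha)$, to get the two-sided bound for large $n$, and you handle the finitely many remaining indices via $P_n(\alpha)\neq0$ from the location of the zeros.

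Your alternative sign argument from the explicit expansion is a valid, self-contained substitute for the zero-location fact. In fact, its $k=n$ term alone gives $|P_n(\alpha)|\ge(-\alpha)_n=\Gamma(n-\alpha)/\Gamma(-\alpha)$ for every $n$, i.e.\ the lower bound with no limiting argument at all.
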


\begin{proof}
	Taking absolute values in the Mehler--Heine formula~\eqref{eq:mhCclassical} and
	using $|\kappa_n|=1$, we obtain
	\begin{equation}\label{eq:Pmlimit}
		\frac{|P_n(\alpha)|}{\Gamma(n-\alpha)}\xrightarrow[n\to\infty]{}|\varphi_C(\alpha)|.
	\end{equation}
	Since $\alpha<0$, the argument $-\alpha$ is positive and is not a pole of $\Gamma$,
	so $\Gamma(-\alpha)\in(0,\infty)$ and hence $\varphi_C(\alpha)\neq0$. The convergent
	sequence~\eqref{eq:Pmlimit} therefore has the strictly positive limit
	$|\varphi_C(\alpha)|$, so there exist constants $0<a\le b$ and an index $n_0$,
	depending only on $\alpha$, with
	\begin{equation}\label{eq:tail}
		a\,\Gamma(n-\alpha)\le|P_n(\alpha)|\le b\,\Gamma(n-\alpha),\quad n\ge n_0 .
	\end{equation}
	It remains to extend~\eqref{eq:tail} to the finitely many indices $1\le n<n_0$.
	Since the Charlier orthogonality measure is positive and supported on
	$\{0,1,2,\ldots\}$, all zeros of the monic Charlier polynomial $P_n$ are real,
	simple, and belong to the convex hull of the support, namely $[0,\infty)$, see \cite{Ismail05}.
	Since $\alpha<0$, it follows that $P_n(\alpha)\neq0$ for every $n\ge1$.
	Consequently, the finite set
	\begin{equation*}
	    \left\{\frac{|P_n(\alpha)|}{\Gamma(n-\alpha)}:1\le n<n_0\right\},
	\end{equation*}
	consists of strictly positive numbers. Setting
	\begin{equation*}
	    c_0=\min\Bigl\{a,\,\min_{1\le n<n_0}\frac{|P_n(\alpha)|}{\Gamma(n-\alpha)}\Bigr\},
	\quad
	c=\max\Bigl\{b,\,\max_{1\le n<n_0}\frac{|P_n(\alpha)|}{\Gamma(n-\alpha)}\Bigr\},
	\end{equation*}
	we have $0<c_0\le c$, and~\eqref{eq:Pmtwosided} holds for all $n\ge1$.
\end{proof}

\begin{lemma}\label{lem:estimations}
Let $\alpha<0$ and $j\ge1$. Then there exists a constant
$c_3=c_3(\alpha,j)>0$ such that, for all $n>j+1$ and all $0\le k\le j$,
\begin{equation}\label{eq:gammaratiobound}
\frac{\dfrac{n!}{(n-j)!}\,\Gamma(n-j-\alpha)\cdot
\dfrac{(n-1)!}{(n-1-k)!}\,\Gamma(n-1-k-\alpha)}
{\Bigl(\dfrac{(n-1)!}{(n-1-j)!}\Bigr)^{2}\Gamma(n-1-j-\alpha)^{2}}
\le c_3\,(n+1)^{\,2j-\alpha}.
\end{equation}
\end{lemma}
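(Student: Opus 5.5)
The plan is to bound the falling-factorial factors and the Gamma factors in \eqref{eq:gammaratiobound} separately, using only elementary estimates. Everything reduces to a polynomial bound in $n$ of degree much smaller than $2j-\alpha$, which is then absorbed into $(n+1)^{2j-\alpha}$.

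\emph{Falling factorials.} In the numerator I use the crude bounds
\begin{equation*}
\frac{n!}{(n-j)!}=[n]_j\le n^{j},\quad \frac{(n-1)!}{(n-1-k)!}=[n-1]_k\le n^{k}.
\end{equation*}
In the denominator I need a lower bound. Each of the $j$ factors of $[n-1]_j$ is at least $n-j$. Moreover, for $n\ge j+1$ one has $(j+1)(n-j)\ge n$, that is, $n-j\ge n/(j+1)$. This gives
\begin{equation*}
\Bigl(\frac{(n-1)!}{(n-1-j)!}\Bigr)^{2}\ge (j+1)^{-2j}\,n^{2j}.
\end{equation*}
This step is the only place where the hypothesis $n>j+1$ enters.

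\emph{Gamma factors.} Put $a=n-1-j-\alpha$. Since $\alpha<0$ and $n>j+1$, we have $a>0$, so all Gamma values are finite and positive. The functional equation $\Gamma(z+1)=z\Gamma(z)$ gives
\begin{equation*}
\frac{\Gamma(n-j-\alpha)}{\Gamma(a)}=a,\quad \frac{\Gamma(n-1-k-\alpha)}{\Gamma(a)}=(a)_{j-k}.
\end{equation*}
Every factor here is at most $n-1-\alpha\le(1-\alpha)(n+1)$. Hence the Gamma part is bounded by $\bigl((1-\alpha)(n+1)\bigr)^{1+j-k}$.

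\emph{Combining.} The whole quotient is at most
\begin{equation*}
(j+1)^{2j}(1-\alpha)^{j+1}\,n^{j+k-2j}(n+1)^{1+j-k}\le (j+1)^{2j}(1-\alpha)^{j+1}(n+1)^{1}.
\end{equation*}
Since $j\ge1$ and $\alpha<0$, we have $2j-\alpha>1$, so $(n+1)^{1}\le(n+1)^{2j-\alpha}$. We can therefore take
\begin{equation*}
c_3=(j+1)^{2j}(1-\alpha)^{j+1},
\end{equation*}
which depends only on $\alpha$ and $j$ and is uniform in $0\le k\le j$.

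There is no real obstacle here; the stated exponent is far from sharp. The only points that need care are the lower bound on the squared falling factorial, which needs $n-j$ to be comparable to $n$ and is where $n>j+1$ is used, and checking that all Gamma arguments are positive so the ratios are legitimate.
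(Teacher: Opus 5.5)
Your proof is correct up to one slip in the combining step, which is easy to repair. It follows a more explicit route than the paper's.

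\textbf{Comparison with the paper.} The paper uses the same split into a falling-factorial block and a Gamma block. It estimates both through Stirling ratio asymptotics, $\Gamma(z+a)/\Gamma(z+b)\sim z^{a-b}$ and $n!/(n-p)!=n^{p}(1+O(n^{-1}))$, which give $O(n^{k-j})$ and $O(n^{1+j-k})$ as $n\to\infty$. The exponents then cancel to $O(n)$, which is absorbed into $(n+1)^{2j-\alpha}$. The paper leaves implicit how these asymptotic statements yield one constant valid for every $n>j+1$.

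You instead use the exact identity $\Gamma(a+m)/\Gamma(a)=(a)_m$. This turns the Gamma block into a product of $1+j-k$ explicit positive factors. You combine it with the elementary bounds $[n]_j\le n^j$, $[n-1]_k\le n^k$ and $[n-1]_j\ge (n/(j+1))^{j}$. Your version gives an explicit $c_3$ valid for all $n>j+1$ at once, with no appeal to asymptotics. It also makes visible exactly where $\alpha<0$ and $n\ge j+1$ are needed.

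\textbf{The slip.} You bound each Gamma factor by $(1-\alpha)(n+1)$. The resulting power of $n$ is then
\begin{equation*}
n^{k-j}(n+1)^{1+j-k}=(n+1)\Bigl(1+\tfrac{1}{n}\Bigr)^{j-k},
\end{equation*}
which is strictly larger than $n+1$ whenever $k<j$. So your displayed inequality $\le (j+1)^{2j}(1-\alpha)^{j+1}(n+1)$ does not follow from the bounds you wrote.

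The fix takes one line. Each Gamma factor is at most $n-1-\alpha\le(1-\alpha)n$, because $(1-\alpha)n-(n-1-\alpha)=1-\alpha(n-1)\ge1$. With this bound the powers of $n$ combine to exactly $n^{k-j}n^{1+j-k}=n\le n+1$. Your constant $c_3=(j+1)^{2j}(1-\alpha)^{j+1}$ then works as stated, using $(1-\alpha)^{1+j-k}\le(1-\alpha)^{j+1}$. Alternatively, keep the $(n+1)$ bound and absorb $(1+1/n)^{j-k}\le 2^{j}$ into $c_3$.
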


\begin{proof}
Denote by $Q_{n,k}$ the left-hand side of~\eqref{eq:gammaratiobound}. We
estimate $Q_{n,k}$ by factoring it into a purely factorial part and a purely
Gamma part, each governed by the elementary asymptotics
\begin{equation}\label{eq:elemasymp}
\frac{n!}{(n-p)!}=n^{p}\bigl(1+O(n^{-1})\bigr),
\quad
\frac{\Gamma(n-a)}{\Gamma(n-b)}=n^{\,b-a}\bigl(1+O(n^{-1})\bigr),
\end{equation}
valid as $n\to\infty$ for fixed $p\in\N_0$ and fixed real $a,b$. Both relations
in~\eqref{eq:elemasymp} are the quantitative form of the Stirling ratio
asymptotics \cite{KLS2010},
\begin{equation}\label{eq:stirlingratio}
    \frac{\Gamma(z+a)}{\Gamma(z+b)}
\sim z^{a-b},
\quad a,b\in\mathbb{C},\quad |z|\to\infty.
\end{equation}
The second is its case $z=n$,
$(a,b)\mapsto(-a,-b)$, and the first follows upon writing
$n!/(n-p)!=\Gamma(n+1)/\Gamma(n-p+1)$ and applying~\eqref{eq:stirlingratio} with
$z=n$, $a=1$, $b=1-p$; in each case the next term of the expansion supplies the
relative error $O(n^{-1})$, with implied constant depending only on $p$,
respectively $a,b$. Since $0\le k\le j$, these parameters range over a finite
set, so the asymptotics hold uniformly in $k$.

The factorial part collects the two falling factorials of the numerator against
the squared one of the denominator. Applying the first relation
in~\eqref{eq:elemasymp} to each block,
\begin{equation*}
    \frac{\dfrac{n!}{(n-j)!}\cdot\dfrac{(n-1)!}{(n-1-k)!}}
{\Bigl(\dfrac{(n-1)!}{(n-1-j)!}\Bigr)^{2}}
=n^{\,j}\,(n-1)^{\,k-2j}\bigl(1+O(n^{-1})\bigr)
=O\!\bigl(n^{\,k-j}\bigr),
\end{equation*}
the exponent being $j+(k-2j)=k-j\le0$ because $k\le j$.

The Gamma part collects the two Gamma factors of the numerator against the
squared one of the denominator. Writing each ratio relative to the common
denominator $\Gamma(n-1-j-\alpha)$ and invoking the second relation
in~\eqref{eq:elemasymp},
\begin{equation*}
    \frac{\Gamma(n-j-\alpha)}{\Gamma(n-1-j-\alpha)}=n\bigl(1+O(n^{-1})\bigr),
\quad
\frac{\Gamma(n-1-k-\alpha)}{\Gamma(n-1-j-\alpha)}=n^{\,j-k}\bigl(1+O(n^{-1})\bigr),
\end{equation*}
where the second exponent records the $j-k\ge0$ unit increments carrying the
argument $n-1-j-\alpha$ up to $n-1-k-\alpha$. Multiplying,
\begin{equation*}
    \frac{\Gamma(n-j-\alpha)\,\Gamma(n-1-k-\alpha)}{\Gamma(n-1-j-\alpha)^{2}}
=O\!\bigl(n^{\,1+(j-k)}\bigr).
\end{equation*}
Combining the two parts, the variable exponents cancel exactly,
\begin{equation*}
    (k-j)+\bigl(1+(j-k)\bigr)=1,
\end{equation*}
so that $Q_{n,k}=O(n)$, uniformly in $0\le k\le j$. Finally, the hypotheses
$j\ge1$ and $\alpha<0$ yield $2j-\alpha\ge2-\alpha>1$, whence
$n\le(n+1)\le(n+1)^{\,2j-\alpha}$; thus $Q_{n,k}=O\bigl((n+1)^{\,2j-\alpha}\bigr)$.
Absorbing the implied constant into $c_3=c_3(\alpha,j)$ gives~\eqref{eq:gammaratiobound}.
\end{proof}

\begin{lemma}\label{lem:ABbound}
Let $\{P_n\}$ be monic Charlier family, $\alpha<0$, $j\ge1$, $\lambda>0$. For
fixed $x$ with $x-\alpha\notin\{0,1,\dots,j\}$ there is
$c_2=c_2(x,\alpha,j,\lambda)>0$ with
\begin{equation}\label{eq:ABbound}
\max\bigl\{|\Omega_n\Aop^{(j)}_n(x,\alpha)|,\,
|\Omega_n\Bop^{(j)}_n(x,\alpha)|\bigr\}
\le c_2\,(n+1)^{\,2j-\alpha},\quad n\ge1.
\end{equation}
\end{lemma}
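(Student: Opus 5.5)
We bound $\Omega_n=\dif^jP_n(\alpha)/\delta_n$ and the two coefficients $\Aop_n^{(j)}(x,\alpha)$, $\Bop_n^{(j)}(x,\alpha)$ separately, using only the Charlier forward shift \eqref{eq:fwdCharlier}, the two-sided bound of Lemma~\ref{lem:pointwise} and the ratio estimate of Lemma~\ref{lem:estimations}.

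\textbf{Step 1: the Sobolev factor.} First, by \eqref{eq:fwdCharlier}, $\dif^{j}P_n(\alpha)=[n]_j P_{n-j}(\alpha)$. Lemma~\ref{lem:pointwise}, applied with index $n-j$, then gives
\[
|\dif^{j}P_n(\alpha)|\le c\,\tfrac{n!}{(n-j)!}\,\Gamma(n-j-\alpha)\qquad(n>j).
\]
For the denominator, $\lambda>0$ and every summand of $\Kn^{(j,j)}_{n-1}(\alpha,\alpha)$ in \eqref{eq:Kij} is a square $(\dif^jP_k(\alpha))^2/\norm{P_k}^2\ge0$. Hence $\delta_n\ge 1$. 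We also keep only the top summand $k=n-1$:
\[
\delta_n\ge \lambda\,\frac{([n-1]_j)^2\,P_{n-1-j}(\alpha)^2}{(n-1)!\,\mu^{n-1}}
\ge \lambda c_0^2\,\frac{([n-1]_j)^2\,\Gamma(n-1-j-\alpha)^2}{(n-1)!\,\mu^{n-1}},
\]
valid for $n>j+1$ by Lemma~\ref{lem:pointwise}. This lower bound on $\delta_n$ is what cancels the growth of the numerator.

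\textbf{Step 2: the coefficients $\Aop$ and $\Bop$.} Since $x-\alpha\notin\{0,\dots,j\}$, the factor $j!/[x-\alpha]_{j+1}$ is a finite nonzero constant depending on $x,\alpha,j$. Each term of the sums in \eqref{eq:Aj}--\eqref{eq:Bj} contains $\dif^kP_{n-1}(\alpha)=[n-1]_kP_{n-1-k}(\alpha)$ or $\dif^kP_n(\alpha)=[n]_kP_{n-k}(\alpha)$, multiplied by the bounded factor $[x-\alpha]_k/k!$. Lemma~\ref{lem:pointwise} bounds these by $c\,[n-1]_k\Gamma(n-1-k-\alpha)$, respectively $c\,[n]_k\Gamma(n-k-\alpha)$. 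The prefactor $1/\norm{P_{n-1}}^2=1/((n-1)!\mu^{n-1})$ then cancels exactly the factor $(n-1)!\mu^{n-1}$ produced by the lower bound on $\delta_n$. Therefore
\[
|\Omega_n\Aop^{(j)}_n(x,\alpha)|\le C(x,\alpha,j,\lambda)\sum_{k=0}^{j}Q_{n,k},
\]
where $Q_{n,k}$ is exactly the quotient of Lemma~\ref{lem:estimations}. That lemma gives $Q_{n,k}\le c_3(n+1)^{2j-\alpha}$, which proves the bound for $\Aop$.

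For $\Bop$ the numerator ratio is $[n]_k\Gamma(n-k-\alpha)$ instead of $[n-1]_k\Gamma(n-1-k-\alpha)$. This is larger by a factor of order $O(1)\cdot n$, by \eqref{eq:elemasymp}. The resulting quotient is therefore $O(n^2)$, which is still at most a constant times $(n+1)^{2j-\alpha}$ because $2j-\alpha>2$. I will verify this with the same factorial/Gamma splitting used in the proof of Lemma~\ref{lem:estimations}, rather than by quoting it directly.

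\textbf{Step 3: small indices and the main obstacle.} The finitely many indices $1\le n\le j+1$ are handled separately, since there the lower bound on $\delta_n$ is unavailable. For these $n$, $\delta_n\ge1$ and all quantities are finite for the fixed $x$. Enlarging $c_2$ absorbs them, and if $\dif^jP_n(\alpha)=0$ for $n<j$ the bound is trivial.

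The main obstacle is the bookkeeping in Step 2 for $\Bop$: matching the indices of $[n]_k\Gamma(n-k-\alpha)$ against the denominator $([n-1]_j)^2\Gamma(n-1-j-\alpha)^2$ must give a power of $n$ no larger than $2j-\alpha$. The $\Aop$ case is the template; for $\Bop$ I expect to rerun the exponent count, obtaining exponent $2$, and then use $2j-\alpha\ge2$.
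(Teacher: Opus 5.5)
Your proposal is correct and follows essentially the same route as the paper. You lower-bound $\delta_n$ by the top kernel summand so that $\norm{P_{n-1}}^2$ cancels, then control the surviving ratio via the forward shift, Lemma~\ref{lem:pointwise} and Lemma~\ref{lem:estimations}, summing over $k$ where the paper takes the maximiser $k^{*}$, which is immaterial. Your treatment of $\Bop$ is more precise than the paper's, which says the exponent is unchanged: the index shift raises the quotient from $O(n)$ to $O(n^2)$, and it is $2j-\alpha>2$ that absorbs this, so rerunning the exponent count instead of quoting the stated bound of Lemma~\ref{lem:estimations} is the right call.
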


\begin{proof}
Clearly, from \eqref{eq:Kij}, we have
\begin{equation*}
    \Kn^{(j,j)}_{n-1}(\alpha,\alpha)
=\sum_{k=0}^{n-1}\frac{\bigl(\dif^{j}P_k(\alpha)\bigr)^2}{\norm{P_k}^2}
\ge\frac{\bigl(\dif^{j}P_{n-1}(\alpha)\bigr)^2}{\norm{P_{n-1}}^2}.
\end{equation*}
Since $\delta_n\ge\lambda\,\Kn^{(j,j)}_{n-1}(\alpha,\alpha)$, this yields
\begin{equation}\label{eq:deltalower}
\delta_n\ge\frac{\lambda\,\bigl(\dif^{j}P_{n-1}(\alpha)\bigr)^2}
{\norm{P_{n-1}}^2}.
\end{equation}
Next we bound $\Aop^{(j)}_n$ itself. From the closed form~\eqref{eq:Aj}, the
triangle inequality together with the elementary estimate
\begin{equation*}
    \bigl|\sum_k a_k d_k\bigr|\le\bigl(\sum_k|a_k|\bigr)\max_k|d_k|,
\end{equation*}
applied with
$d_k=\dif^kP_{n-1}(\alpha)$, gives
\begin{equation}\label{eq:Abound}
|\Aop^{(j)}_n(x,\alpha)|
\le\frac{C_x}{\norm{P_{n-1}}^{2}}\,
\max_{0\le k\le j}\bigl|\dif^{k}P_{n-1}(\alpha)\bigr|,
\quad
C_x=j!\sum_{k=0}^{j}\frac{|\ff{x-\alpha}{k}|}{k!\,|\ff{x-\alpha}{j+1}|},
\end{equation}
the constant $C_x$ being finite precisely because $\ff{x-\alpha}{j+1}\neq0$.

These two estimates combine to produce the announced cancellation. Multiplying
$|\Omega_n|$ by the bound~\eqref{eq:Abound} and
using~\eqref{eq:deltalower} in the denominator, the two occurrences of
$\norm{P_{n-1}}^2$ cancel, leaving
\begin{equation}\label{eq:productcancel}
|\Omega_n\Aop^{(j)}_n(x,\alpha)|
\le\frac{C_x}{\lambda}\,
\frac{|\dif^{j}P_n(\alpha)|}{\bigl(\dif^{j}P_{n-1}(\alpha)\bigr)^{2}}\,\max_{0\le k\le j}
|\dif^{k}P_{n-1}(\alpha)|.
\end{equation}
This is the structural core of the argument: the factorially small weight
$1/\norm{P_{n-1}}^2$ carried by $\Aop^{(j)}_n$ is absorbed by the factorially
large lower bound for $\delta_n$, so that only ratios of difference values at the
mass point survive.

Fix such an $n$ and let $k^{*}=k^{*}(n)\in\{0,\dots,j\}$ attain the maximum
$\max_{0\le k\le j}|\dif^{k}P_{n-1}(\alpha)|$. Inserting the forward shift
operator~\eqref{eq:fwdgeneral} into each of the three difference values
in~\eqref{eq:productcancel} and bounding each polynomial value by the
appropriate side of~\eqref{eq:Pmtwosided}, we obtain
\begin{align*}
|\dif^{j}P_n(\alpha)|
&=\frac{n!}{(n-j)!}\,|P_{n-j}(\alpha)|
\le c\,\frac{n!}{(n-j)!}\,\Gamma(n-j-\alpha),\\[2pt]
|\dif^{k^{*}}P_{n-1}(\alpha)|
&=\frac{(n-1)!}{(n-1-k^{*})!}\,|P_{n-1-k^{*}}(\alpha)|
\le c\,\frac{(n-1)!}{(n-1-k^{*})!}\,\Gamma(n-1-k^{*}-\alpha),\\[2pt]
\bigl(\dif^{j}P_{n-1}(\alpha)\bigr)^{2}
&=\Bigl(\frac{(n-1)!}{(n-1-j)!}\Bigr)^{2}P_{n-1-j}(\alpha)^{2}
\ge c_0'^{\,2}\Bigl(\frac{(n-1)!}{(n-1-j)!}\Bigr)^{2}\Gamma(n-1-j-\alpha)^{2},
\end{align*}
where the first two lines use the upper bound of~\eqref{eq:Pmtwosided} (with
$m=n-j$ and $m=n-1-k^{*}$, respectively) and the third uses its lower bound
(with $m=n-1-j$). Since the numerator is bounded above by a positive quantity and
the denominator below by a positive quantity, division is legitimate, and the
ratio in~\eqref{eq:productcancel} satisfies
\begin{equation*}
    \frac{|\dif^{j}P_n(\alpha)|}
{\bigl(\dif^{j}P_{n-1}(\alpha)\bigr)^{2}}\,\max_{0\le k\le j}|\dif^{k}P_{n-1}(\alpha)|
\le\frac{c^{2}}{c_0'^{\,2}}\;
\frac{\dfrac{n!}{(n-j)!}\,\Gamma(n-j-\alpha)\cdot
\dfrac{(n-1)!}{(n-1-k^{*})!}\,\Gamma(n-1-k^{*}-\alpha)}
{\Bigl(\dfrac{(n-1)!}{(n-1-j)!}\Bigr)^{2}\Gamma(n-1-j-\alpha)^{2}}.
\end{equation*}
The second factor is exactly the left-hand side of~\eqref{eq:gammaratiobound}
with $k=k^{*}$, see Lemma \ref{lem:estimations}. As that bound holds for every $0\le k\le j$, it holds in
particular at $k=k^{*}$ and bounds this factor by $c_3\,(n+1)^{\,2j-\alpha}$.
Substituting into~\eqref{eq:productcancel},
\begin{equation*}
    |\Omega_n\Aop^{(j)}_n(x,\alpha)|
\le\frac{C_x}{\lambda}\cdot\frac{c^{2}}{c_0'^{\,2}}\cdot
c_3\,(n+1)^{\,2j-\alpha}
=c_2'\,(n+1)^{\,2j-\alpha},\quad n\ge j+1.
\end{equation*}
For the finitely many indices $1\le n\le j$, the quantity
$|\Omega_n\Aop^{(j)}_n(x,\alpha)|$ is finite, because the denominator
$\delta_n\ge1$ is bounded below by its
constant term, while $|\Aop^{(j)}_n(x,\alpha)|<\infty$ since $C_x<\infty$ under
the standing hypothesis $\ff{x-\alpha}{j+1}\neq0$. Hence
\begin{equation*}
    M=\max_{1\le n\le j}
\frac{|\Omega_n\Aop^{(j)}_n(x,\alpha)|}{(n+1)^{\,2j-\alpha}}<\infty,
\end{equation*}
and $c_2=\max\{c_2',M\}$ extends the bound to all $n\ge1$.

Finally we turn to the $\Bop$-multiplier. The closed form~\eqref{eq:Bj} differs
from~\eqref{eq:Aj} only by an overall sign and by carrying
$\dif^{k}P_{n}(\alpha)$ rather than $\dif^{k}P_{n-1}(\alpha)$. The sign is
immaterial to absolute values, and repeating the argument verbatim replaces the
factor $\max_{0\le k\le j}|\dif^{k}P_{n-1}(\alpha)|$ in~\eqref{eq:productcancel}
by $\max_{0\le k\le j}|\dif^{k}P_{n}(\alpha)|$. At the corresponding maximiser,
the forward shift operator with $m=n$ and the upper bound
of~\eqref{eq:Pmtwosided} bound this term by
$c\,\tfrac{n!}{(n-k)!}\,\Gamma(n-k-\alpha)$. In the bookkeeping ratio this
amounts to the index shift $n-1\mapsto n$ in the second numerator block: the
falling factorial $\tfrac{(n-1)!}{(n-1-k)!}=O(n^{k})$ is replaced by
$\tfrac{n!}{(n-k)!}=O(n^{k})$, and $\Gamma(n-1-k-\alpha)$ by
$\Gamma(n-k-\alpha)=O(n)\,\Gamma(n-1-k-\alpha)$. The same powers of $n$ govern
both, so the worst-case exponent is unchanged and only the implied constant
grows. The bound~\eqref{eq:gammaratiobound}, applied with this shifted block,
again yields an $O\bigl((n+1)^{2j-\alpha}\bigr)$ estimate for $n\ge j+1$, and the
indices $1\le n\le j$ are absorbed as before. Thus
$|\Omega_n\Bop^{(j)}_n(x,\alpha)|\le c_2''\,(n+1)^{2j-\alpha}$ for all $n\ge1$,
and taking $c_2=\max\{c_2',c_2'',M\}$ gives~\eqref{eq:ABbound}.
\end{proof}

The proofs of the preceding three lemmas extend directly to the Meixner
family after replacing the corresponding Charlier quantities by their
Meixner counterparts. More precisely, one uses the radius of convergence
$\mu$ of the generating function~\eqref{eq:GMmonic}, the squared norm
\begin{equation*}
    \|P_n\|^2=n!(\gamma)_n\mu^n(1-\mu)^{-\gamma-2n},
\end{equation*}
and the Meixner Mehler--Heine formula~\eqref{eq:mhMclassical} in place of the
corresponding Charlier formulas. No further modifications of the proofs
are required. Although the family-dependent constants and exponential
growth factors change, the polynomial dependence on $n$ remains the same
in both cases. For this reason, we present the following convergence
result simultaneously for the Charlier and Meixner families.

\begin{proposition}\label{prop:convergence}
	Fix $x\in\C$, $j\ge1$, $\lambda>0$, $\alpha<0$. For either family the
	monic-normalised Sobolev correction
	\begin{equation}\label{eq:Rexp}
		R_{j,\lambda,\alpha}(x;t)
		=\sum_{n\ge1}\frac{\psi_n\,\Omega_n\,\Kn^{(0,j)}_{n-1}(x,\alpha)}{n!}\,t^{n},
	\end{equation}
	converges absolutely for $|t|<\mu$ and therefore defines a function analytic on
	that disc, where $\mu$ is the radius of~\eqref{eq:GCmonic} (Charlier)
	or~\eqref{eq:GMmonic} (Meixner).
\end{proposition}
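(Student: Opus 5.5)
The plan is to bound the $n$-th coefficient of~\eqref{eq:Rexp} by a polynomial in $n$ times the geometric factor coming from the classical generating function, and then compare with a geometric series. I first assume $x-\alpha\notin\{0,1,\dots,j\}$, so that Lemma~\ref{lem:ABbound} applies, and treat the exceptional points at the end.

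\textbf{Step 1: coefficient bound.} By Proposition~\ref{S1-LemmaKernel0j},
\begin{equation*}
\Omega_n\Kn^{(0,j)}_{n-1}(x,\alpha)=\Omega_n\Aop^{(j)}_n(x,\alpha)P_n(x)+\Omega_n\Bop^{(j)}_n(x,\alpha)P_{n-1}(x).
\end{equation*}
Multiplying by $\psi_n/n!$ and applying Lemma~\ref{lem:ABbound}, the modulus of the $n$-th coefficient is at most
\begin{equation*}
c_2(n+1)^{2j-\alpha}\Bigl(\frac{|\psi_n||P_n(x)|}{n!}+\frac{|\psi_n||P_{n-1}(x)|}{n!}\Bigr).
\end{equation*}
The first term is controlled by Lemma~\ref{lem:cauchy}: for any $0<r<\mu$ it is at most $B_r(x)r^{-n}$. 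For the second term I write
\begin{equation*}
\frac{|\psi_n|}{n!}=\frac{|\psi_n|}{|\psi_{n-1}|}\cdot\frac{1}{n}\cdot\frac{|\psi_{n-1}|}{(n-1)!},
\end{equation*}
where $|\psi_n/\psi_{n-1}|$ is the constant $\mu^{-1}$ (Charlier) or $(1-\mu)/\mu$ (Meixner). Hence the second term is at most $\mathrm{const}\cdot B_r(x)r^{-(n-1)}$.

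\textbf{Step 2: summation.} Altogether the $n$-th coefficient is bounded by $C(x,r)(n+1)^{2j-\alpha}r^{-n}$. Given $|t|<\mu$, I choose $r$ with $|t|<r<\mu$. The series $\sum_n (n+1)^{2j-\alpha}(|t|/r)^n$ converges by the ratio test, so~\eqref{eq:Rexp} converges absolutely. Since $B_r(x)$ does not depend on $t$, the convergence is uniform on the closed disc $|t|\le r'$ for every $r'<r$. Each partial sum is a polynomial in $t$, so Weierstrass' theorem gives analyticity on $|t|<\mu$.

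\textbf{Step 3: Meixner.} The same argument needs Lemma~\ref{lem:cauchy} for Meixner. I would prove it directly by Cauchy's estimate for $(1-t/\mu)^x(1-t)^{-x-\gamma}$ on $|t|=r<\mu<1$. Both factors have nonvanishing bases there, and their moduli are bounded by analogous expressions in $|\Re x|$ and $|\Im x|$. Lemma~\ref{lem:ABbound} transfers as remarked before the statement.

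\textbf{Step 4: exceptional points (main obstacle).} I expect the points $x\in\alpha+\{0,\dots,j\}$ to be the main obstacle, since there $C_x=\infty$ and Lemma~\ref{lem:ABbound} gives no bound. I would avoid bounding $\Aop^{(j)}_n$ and $\Bop^{(j)}_n$ separately and instead use the defining sum~\eqref{eq:Kij}, which shows $\Kn^{(0,j)}_{n-1}(x,\alpha)$ is a polynomial, hence entire, in $x$. Fix a small circle $\Gamma$ around the exceptional point avoiding $\alpha+\{0,\dots,j\}$. By the maximum principle, the coefficient at the exceptional point is bounded by its maximum over $\Gamma$. On the compact set $\Gamma$ the constants $C_x$ and $B_r(x)$ are continuous, hence bounded, so the estimates of Step 1 hold uniformly on $\Gamma$. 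This closes the argument for every $x\in\C$.
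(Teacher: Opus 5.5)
Your proposal is correct and follows the paper's proof. The shared ingredients are: the same $\Aop/\Bop$ splitting of $\Omega_n\Kn^{(0,j)}_{n-1}(x,\alpha)$; the Cauchy bound of Lemma~\ref{lem:cauchy}, with the constant ratio $|\psi_n/\psi_{n-1}|$ used to handle the $P_{n-1}$ term; the polynomial bound of Lemma~\ref{lem:ABbound}; and a comparison with a geometric series, where the paper uses the root test instead. Your Step~4 is a genuine addition: the paper's proof silently relies on Lemma~\ref{lem:ABbound}, which excludes $x\in\alpha+\{0,\dots,j\}$ even though the proposition is stated for every $x\in\C$, and your maximum-modulus argument on a small circle (using that $\Kn^{(0,j)}_{n-1}(\cdot,\alpha)$ is a polynomial in $x$ and that $C_x$, $B_r(x)$ are bounded there) closes exactly that case.
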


\begin{proof}
	We establish absolute convergence of~\eqref{eq:Rexp} by a root test on its
	coefficients
    \begin{equation*}
        c_n=\frac{\psi_n\Omega_n\Kn^{(0,j)}_{n-1}(x,\alpha)}{n!},
    \end{equation*}
    analyticity on
	$|t|<\mu$ then follows, since an absolutely convergent power series is analytic
	inside its disc of convergence. The decisive step is to pair the Sobolev
	multiplier $\Omega_n$ with each kernel coefficient before estimating, so that the
	two preceding lemmas act on their natural arguments. By the kernel
	representation~\eqref{eq:K0j},
	\begin{equation*}
	    \Omega_n\Kn^{(0,j)}_{n-1}(x,\alpha)
	=\bigl(\Omega_n\Aop^{(j)}_n\bigr)P_n(x)
	+\bigl(\Omega_n\Bop^{(j)}_n\bigr)P_{n-1}(x).
	\end{equation*}
	Fix $0<r<\mu$. The Cauchy bound of Lemma~\ref{lem:cauchy} controls the
	polynomial values against the normalising sequence: in the Charlier case the
	geometric factor $|\psi_n|=\mu^{-n}$ cancels exactly the $\mu^{n}$ carried by
	$|P_n(x)|$, so that, up to a constant absorbed into $B_r(x)$,
	\begin{equation*}
	    |\psi_n|\,|P_n(x)|\le B_r(x)\,n!\,r^{-n}.
	\end{equation*}
	The factor $P_{n-1}(x)$ requires converting the bound at index $n-1$ into the
	format of index $n$. Clearly,
    \begin{equation*}
        |\psi_n|\,|P_{n-1}(x)|
	=\frac{|\psi_n|}{|\psi_{n-1}|}\,|\psi_{n-1}|\,|P_{n-1}(x)|.
    \end{equation*}
    Applying Lemma~\ref{lem:cauchy} at index $n-1$, we deduce
	\begin{equation*}
	    |\psi_n|\,|P_{n-1}(x)|
	\le \frac{|\psi_n|}{|\psi_{n-1}|}\,B_r(x)\,(n-1)!\,r^{-(n-1)}
	=\frac{r}{\mu\,n}\,B_r(x)\,n!\,r^{-n}\le B_r(x)\,n!\,r^{-n},
	\end{equation*}
	where the second equality uses the geometric ratio
	$|\psi_n|/|\psi_{n-1}|=\mu^{-1}$ (Charlier) together with
	$(n-1)!=n!/n$ and $r^{-(n-1)}=r\cdot r^{-n}$, and the final inequality holds
	because $r/(\mu n)\le r/\mu<1$ for $n\ge1$ and $0<r<\mu$. Thus both polynomial
	factors obey the common bound
    \begin{equation*}
        |\psi_n|\,|P(x)|\le B_r(x)\,n!\,r^{-n},
    \end{equation*}
    with the
	adjustment for $P_{n-1}$ only improving the constant. Combining this with the
	product bound
	$\max\{|\Omega_n\Aop^{(j)}_n|,|\Omega_n\Bop^{(j)}_n|\}\le c_2\,(n+1)^{2j-\alpha}$
	of Lemma~\ref{lem:ABbound}, the triangle inequality gives
	\begin{equation}\label{eq:cnfinal}
		|c_n|\le C(x,\alpha,j,\lambda,r)\,(n+1)^{2j-\alpha}\,r^{-n},
		\quad C=2c_2B_r(x).
	\end{equation}
	The polynomial prefactor $(n+1)^{2j-\alpha}$ has $n$-th root tending to $1$, so
	it leaves the root test unaffected, and~\eqref{eq:cnfinal} yields
    \begin{equation*}
        \limsup_n\sqrt[n]{|c_n|}\le\frac{1}{r}.
    \end{equation*}
    Since $0<r<\mu$ was arbitrary, taking the
	supremum over all such $r$ gives
    \begin{equation*}
        \limsup_n\sqrt[n]{|c_n|}\le\frac{1}{\mu},
    \end{equation*}
    equivalently,
	the radius of convergence of~\eqref{eq:Rexp} is at least $\mu$, so the series
	converges absolutely on $|t|<\mu$. The Meixner case is identical, the sole
	change being the geometric factor
    \begin{equation*}
        |\psi_n|= \left(\frac{1-\mu}{\mu}\right)^{n}.
    \end{equation*}
    Hence, the ratio
    \begin{equation*}
        \frac{|\psi_n|}{|\psi_{n-1}|}=\frac{(1-\mu)}{\mu},
    \end{equation*}
    is
	again a constant, so the same adjustment produces~\eqref{eq:cnfinal}.
\end{proof}

\begin{theorem}[Formal Sobolev generating function]\label{thm:formalGF}
	Let $\{P_n\}_{n\ge0}$ be either the monic Charlier or the monic Meixner
	family and $\{\Sob{n}\}_{n\ge0}$ the Sobolev-type family
	of~\eqref{eq:innerprod}. Then, for $|t|<\mu$,
	\begin{equation}\label{eq:GSfirst}
		G_S(x;t)=G_P(x;t)-\lambda\,R_{j,\lambda,\alpha}(x;t),
	\end{equation}
	where, in the monic-normalised convention~\eqref{eq:GPmonic}, the Sobolev
	correction series carries the same weight $\frac{\psi_n}{n!}$,
	\begin{equation}\label{eq:Rdef}
		R_{j,\lambda,\alpha}(x;t)
		=\sum_{n\ge1}\frac{\psi_n}{n!}\,\Omega_n\,\Kn_{n-1}^{(0,j)}(x,\alpha)\,t^{n}.
	\end{equation}
	In particular $G_S(x;t)$ is analytic on $|t|<\mu$, the same disc on which
	$G_P(x;t)$ is analytic. Equivalently, using the kernel
	representation~\eqref{eq:K0j},
	\begin{equation}\label{eq:GStwomult}
		G_S(x;t)=G_P(x;t)
		-\lambda\sum_{n\ge1}\frac{\psi_n}{n!}\,\Omega_n \Aop_n^{(j)}(x,\alpha)P_n(x)\,t^{n}
		-\lambda t\sum_{n\ge0}\frac{\psi_{n+1}}{(n+1)!}\,\Omega_{n+1}\Bop_{n+1}^{(j)}(x,\alpha)P_n(x)\,t^{n}.
	\end{equation}
\end{theorem}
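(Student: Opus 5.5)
The plan is to derive \eqref{eq:GSfirst} termwise from the connection formula and to justify the rearrangements with the absolute convergence of Proposition~\ref{prop:convergence}.

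First, multiply \eqref{eq:SobCon} by $\psi_n t^n/n!$ for each $n\ge1$. For $n=0$ both families reduce to $\Sob{0}=P_0=1$, and this contributes the common term $\psi_0$ to both series. Summing over $n\ge0$ gives, at the level of partial sums $N$,
\[
\sum_{n=0}^{N}\frac{\psi_n}{n!}\Sob{n}(x)t^n=\sum_{n=0}^{N}\frac{\psi_n}{n!}P_n(x)t^n-\lambda\sum_{n=1}^{N}\frac{\psi_n}{n!}\Omega_n\Kn^{(0,j)}_{n-1}(x,\alpha)t^n .
\]
For $|t|<\mu$ the first series on the right converges to the closed form $G_P(x;t)$ by \eqref{eq:GCmonic}--\eqref{eq:GMmonic}. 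The second converges absolutely to $R_{j,\lambda,\alpha}(x;t)$ by Proposition~\ref{prop:convergence}. Letting $N\to\infty$, the left-hand partial sums therefore converge, so $G_S(x;t)$ defined by \eqref{eq:GFdef} converges on $|t|<\mu$ and satisfies \eqref{eq:GSfirst}.

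Analyticity of $G_S$ then follows from \eqref{eq:GSfirst} as the difference of two functions analytic on the disc. Equivalently, the coefficient bounds $|\psi_n P_n(x)|/n!\le B_r(x)r^{-n}$ together with \eqref{eq:cnfinal} give a radius of convergence at least $\mu$ for the power series \eqref{eq:GFdef} itself.

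For \eqref{eq:GStwomult}, substitute the kernel representation \eqref{eq:K0j} into each coefficient of $R_{j,\lambda,\alpha}$. This splits $R$ into two series, one with coefficients $\frac{\psi_n}{n!}\Omega_n\Aop^{(j)}_n(x,\alpha)P_n(x)$ and one with coefficients $\frac{\psi_n}{n!}\Omega_n\Bop^{(j)}_n(x,\alpha)P_{n-1}(x)$. The splitting is legitimate only if each piece converges separately. That is exactly what the proof of Proposition~\ref{prop:convergence} shows: Lemma~\ref{lem:ABbound} combined with Lemma~\ref{lem:cauchy} bounds each piece by $c_2B_r(x)(n+1)^{2j-\alpha}r^{-n}$. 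In the second series, reindex $n\mapsto n+1$ and factor out $t$ to obtain the last sum in \eqref{eq:GStwomult}.

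The main obstacle is the hypothesis $x-\alpha\notin\{0,\dots,j\}$ in Lemma~\ref{lem:ABbound}. At these finitely many points the separate multipliers $\Aop,\Bop$ have poles, although their combination $\Kn^{(0,j)}_{n-1}(x,\alpha)$ is a polynomial in $x$. I would therefore state \eqref{eq:GStwomult} for $x$ off this exceptional set. Identity \eqref{eq:GSfirst} would be extended to all $x$ by continuity or analyticity in $x$, which requires local uniformity of the bounds in $x$. $B_r(x)$ is continuous, and $C_x$ is locally bounded away from the exceptional set. To cover the exceptional points themselves I would use the maximum principle in $x$: each coefficient of $R$ is a polynomial in $x$, so its value at an exceptional point is bounded by its maximum on a small circle avoiding those points, where the uniform bounds apply.
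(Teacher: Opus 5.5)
Your proposal is correct and follows the paper's proof. Like the paper, you sum the connection formula \eqref{eq:SobCon} against $\psi_n t^n/n!$, use Proposition~\ref{prop:convergence} for absolute convergence of the correction series, then split via \eqref{eq:K0j} and reindex the $\Bop$-part. Your handling of the points with $[x-\alpha]_{j+1}=0$ is a genuine refinement: there $\Aop^{(j)}_n,\Bop^{(j)}_n$ are undefined and Lemma~\ref{lem:ABbound} does not apply, which the paper passes over by stating Proposition~\ref{prop:convergence} for every $x\in\C$. No continuity extension of \eqref{eq:GSfirst} is needed at those points, because the partial-sum identity is exact for every $x$ and your maximum-principle bound on the polynomial coefficients of $R_{j,\lambda,\alpha}$ already gives convergence there.
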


\begin{proof}
	Multiply the connection formula~\eqref{eq:Omega} by $\psi_n t^{n}/n!$, and sum over
	$n\ge1$. Adding the $n=0$ term, which contributes $\psi_0\Sob{0}=\psi_0P_0$ with
	no correction since $\Sob{0}=P_0$, the monic-normalised Sobolev generating
	function splits as
	\begin{equation*}
	    G_S(x;t)=\sum_{n\ge0}\frac{\psi_n}{n!}\,\Sob{n}(x)\,t^{n}
	=G_P(x;t)-\lambda\sum_{n\ge1}\frac{\psi_n}{n!}\,\Omega_n\Kn_{n-1}^{(0,j)}(x,\alpha)\,t^{n},
	\end{equation*}
	which is precisely~\eqref{eq:GSfirst} with $R_{j,\lambda,\alpha}$ as
	in~\eqref{eq:Rdef}. By Proposition~\ref{prop:convergence} the correction series
	converges absolutely on $|t|<\mu$, where $G_P$ is analytic by~\eqref{eq:GCmonic};
	hence the rearrangement is legitimate and, being a difference of two functions
	analytic on that disc, $G_S(x;t)$ is itself analytic on $|t|<\mu$. All
	manipulations below take place within this common disc.
	
	To obtain~\eqref{eq:GStwomult}, substitute the kernel
	representation~\eqref{eq:K0j} with $y=\alpha$,
	\begin{equation*}
	    \Omega_n\Kn_{n-1}^{(0,j)}(x,\alpha)
	=\Omega_n\Aop_n^{(j)}(x,\alpha)\,P_n(x)
	+\Omega_n\Bop_n^{(j)}(x,\alpha)\,P_{n-1}(x),
	\end{equation*}
	into~\eqref{eq:Rdef}, splitting $R_{j,\lambda,\alpha}$ into an $\Aop$-sum and a
	$\Bop$-sum. The $\Aop$-sum already has the form displayed
	in~\eqref{eq:GStwomult}. In the $\Bop$-sum, the reindexing $n=m+1$ (so that
	$P_{n-1}=P_m$ and $m$ runs over $n\ge0$) extracts one power of $t$ and transforms
	the summand into
	\begin{equation*}
	 \frac{\psi_{m+1}}{(m+1)!}\,\Omega_{m+1}\Bop_{m+1}^{(j)}(x,\alpha)\,P_m(x)\,t^{m},
	\end{equation*}
	with the shifted weight $\frac{\psi_{m+1}}{(m+1)!}$. Absolute convergence again justifies
	treating the two sums separately, and renaming $m$ as $n$
	yields~\eqref{eq:GStwomult}.
\end{proof}

\begin{example}[Degree-two Sobolev polynomials via the generating function]
\label{ex:degree2}
We illustrate Theorem~\ref{thm:formalGF} by extracting $\Sob{2}$ from the
coefficient of $t^{2}$ in $G_S(x;t)$, for both families, with $j=1$. Throughout,
$G_P(x;t)$ denotes the monic-normalised generating function~\eqref{eq:GPmonic},
$[t^{n}]f$ denotes the coefficient of $t^{n}$ in the Taylor expansion of $f$ about
$t=0$, and the correction series~\eqref{eq:Rdef} carries the same weight
$\psi_n/n!$. Since $[t^{n}]G_P(x;t)=(\psi_n/n!)\,P_n(x)$ and
\begin{equation*}
    [t^{n}]R_{1,\lambda,\alpha}(x;t)=(\psi_n/n!)\,\Omega_n\,\Kn^{(0,1)}_{n-1}(x,\alpha),
\end{equation*}
the
common scalar $\psi_n/n!$ factors out of~\eqref{eq:GSfirst}, giving
\begin{equation}\label{eq:extract}
\Sob{n}(x)=\frac{n!}{\psi_n}\,[t^{n}]\,G_S(x;t)
=P_n(x)-\lambda\,\Omega_n\,\Kn^{(0,1)}_{n-1}(x,\alpha),
\end{equation}
which is the connection formula~\eqref{eq:ConxF1}. The extraction is therefore
independent of the particular weight $\psi_n$: the normalisation chosen for $G_P$ is
inherited verbatim by the correction series~\eqref{eq:Rdef}, so it cancels in the
extracted coefficient.

We illustrate Theorem~\ref{thm:formalGF} by extracting $\Sob{2}$ from the
coefficient of $t^{2}$ in $G_S(x;t)$, for both families, with $j=1$. Throughout,
\eqref{eq:GPmonic} denotes the monic-normalised generating
function~\eqref{eq:GPmonic}, and the correction series~\eqref{eq:Rdef} carries the
same weight $\frac{\psi_n}{n!}$. Since $[t^{n}]G_P=(\frac{\psi_n}{n!})P_n$ and
$[t^{n}]R_{1,\lambda,\alpha}=(\frac{\psi_n}{n!})\Omega_n\Kn^{(0,1)}_{n-1}(x,\alpha)$, the
common scalar $\frac{\psi_n}{n!}$ factors out of~\eqref{eq:GSfirst}, giving
\begin{equation}\label{eq:extract}
\Sob{n}(x)=\frac{n!}{\psi_n}\,[t^{n}]\,G_S(x;t)
=P_n(x)-\lambda\,\Omega_n\,\Kn^{(0,1)}_{n-1}(x,\alpha),
\end{equation}
which is the connection formula~\eqref{eq:ConxF1}. The extraction is therefore
independent of the particular weight $\psi_n$.
\end{example}

\subsection*{Example 1: Charlier Sobolev type}
The monic Charlier polynomials and squared norms are
\begin{equation*}
C^{(\mu)}_0=1,\quad
C^{(\mu)}_1=x-\mu,\quad
C^{(\mu)}_2=x^{2}-(2\mu+1)x+\mu^{2}.
\end{equation*}
The monic-normalised generating function, to order $t^{2}$, reads
\begin{equation*}\label{eq:GPcharlier}
G_P(x;t)=\psi_0+\psi_1(x-\mu)\,t
+\frac{\psi_2}{2}\bigl(x^{2}-(2\mu+1)x+\mu^{2}\bigr)\,t^{2}+O(t^{3}).
\end{equation*}
From the forward-shift relation $\dif C^{(\mu)}_n=n\,C^{(\mu)}_{n-1}$, the
differences at $\alpha$ are
\begin{equation*}
\dif C^{(\mu)}_1(\alpha)=1,
\quad
\dif C^{(\mu)}_2(\alpha)=2\,C^{(\mu)}_1(\alpha)=2(\alpha-\mu),
\end{equation*}
whence the surviving $k=1$ term yields
\begin{equation*}\label{eq:charlierdata}
\Kn^{(1,1)}_{1}(\alpha,\alpha)=\frac{1}{\mu},
\quad
\Omega_2=\frac{2\mu(\alpha-\mu)}{\mu+\lambda},
\quad
\Kn^{(0,1)}_{1}(x,\alpha)=\frac{x-\mu}{\mu}.
\end{equation*}
The correction series~\eqref{eq:Rdef}, to order $t^{2}$, is
\begin{equation*}
R_{1,\lambda,\alpha}(x;t)
=\frac{\psi_1}{1!}\,\Omega_1\,\Kn^{(0,1)}_{0}(x,\alpha)\,t
+\frac{\psi_2}{2!}\,\Omega_2\,\Kn^{(0,1)}_{1}(x,\alpha)\,t^{2}+O(t^{3}),
\end{equation*}
whose coefficient of $t^{2}$ is $\tfrac{\psi_2}{2}\,\Omega_2\,(x-\mu)/\mu$.
Subtracting $\lambda R_{1,\lambda,\alpha}$ from $G_P$ as in~\eqref{eq:GSfirst} and
reading off the coefficient of $t^{2}$,
\begin{equation*}
[t^{2}]\,G_S(x;t)
=\frac{\psi_2}{2}\Bigl[\,C^{(\mu)}_2(x)-\lambda\,\Omega_2\,\frac{x-\mu}{\mu}\Bigr],
\end{equation*}
so that, by~\eqref{eq:extract},
\begin{equation*}\label{eq:S2charliergen}
\Sob{2}(x)
=C^{(\mu)}_2(x)-\lambda\,\Omega_2\,\frac{x-\mu}{\mu}
=x^{2}-\Bigl(2\mu+1+\frac{2\lambda(\alpha-\mu)}{\mu+\lambda}\Bigr)x
+\mu^{2}+\frac{2\lambda\mu(\alpha-\mu)}{\mu+\lambda}.
\end{equation*}

\subsection*{Example 2: Meixner Sobolev type}
The monic Meixner polynomials and squared norms are
\begin{equation*}
M^{(\gamma,\mu)}_0(x)=1,
\quad
M^{(\gamma,\mu)}_1(x)=x-\frac{\gamma\mu}{1-\mu},
\end{equation*}
and, from the three-term recurrence,
\begin{equation*}
M^{(\gamma,\mu)}_2(x)
=x^{2}+\frac{\mu^{2}(2\gamma+1)-2\gamma\mu-1}{(1-\mu)^{2}}\,x
+\frac{\mu^{2}\gamma(\gamma+1)}{(1-\mu)^{2}} .
\end{equation*}
The monic-normalised generating function, to order $t^{2}$, reads
\begin{equation*}\label{eq:GPmeixner}
G_P(x;t)=\psi_0+\psi_1\Bigl(x-\frac{\gamma\mu}{1-\mu}\Bigr)t
+\frac{\psi_2}{2}\,M^{(\gamma,\mu)}_2(x)\,t^{2}+O(t^{3}).
\end{equation*}
From the forward-shift relation $\dif M^{(\gamma,\mu)}_n=n\,M^{(\gamma+1,\mu)}_{n-1}$,
which raises the first parameter, the differences at $\alpha$ are
\begin{equation*}
\dif M^{(\gamma,\mu)}_1(\alpha)=1,
\quad
\dif M^{(\gamma,\mu)}_2(\alpha)
=2\,M^{(\gamma+1,\mu)}_1(\alpha)
=2x-\frac{2\mu(\gamma+1)}{1-\mu},
\end{equation*}
whence, the surviving
$k=1$ term yields
\begin{equation*}
    K_1^{(1,1)}(\alpha,\alpha)=\frac{(1-\mu)^{\gamma+2}}{\gamma\mu},
\quad
K_1^{(0,1)}(x,\alpha)
=-\frac{(1-\mu)^{\gamma+1}}{\gamma\mu}\bigl(\mu(x+\gamma)-x\bigr).
\end{equation*}
Second, both kernels remain independent of $\alpha$, exactly as in the Charlier
case, and the mixed kernel is again a polynomial of degree one; the cancellation
of $[x-\alpha]_2$ is therefore a feature common to both families. The
coefficient $\Omega_2$ reads
\begin{equation*}
    \Omega_2=\frac{2\gamma\mu\bigl[\alpha(\mu-1)+\mu(\gamma+1)\bigr]}
{(\mu-1)\bigl[\gamma\mu+\lambda(1-\mu)^{\gamma+2}\bigr]}.
\end{equation*}
Proceeding exactly as before, the coefficient of $t^{2}$ in
$G_S=G_P-\lambda R_{1,\lambda,\alpha}$ gives
\begin{equation*}
    \Sob{2}(x)=x^2+A_S x+B_S,
\end{equation*}
with
\begin{equation*}
    A_S=\frac{\mu^2(2\gamma+1)-2\mu\gamma-1}{(\mu-1)^2}
-\frac{2\lambda(1-\mu)^{\gamma+2}\bigl[\alpha(\mu-1)+\mu(\gamma+1)\bigr]}
{(\mu-1)\bigl[\gamma\mu+\lambda(1-\mu)^{\gamma+2}\bigr]},
\end{equation*}
\begin{equation*}
    B_S=\frac{\mu^2\gamma(\gamma+1)}{(\mu-1)^2}
-\frac{2\lambda\mu\gamma(1-\mu)^{\gamma+2}\bigl[\alpha(\mu-1)+\mu(\gamma+1)\bigr]}
{(\mu-1)^2\bigl[\gamma\mu+\lambda(1-\mu)^{\gamma+2}\bigr]}.
\end{equation*}
In both families the Sobolev perturbation preserves the monic normalisation and
alters only the lower-order coefficients, through a correction proportional to
$\lambda$ and vanishing linearly as $\lambda\to0$; the classical polynomial is
then recovered. The Charlier correction vanishes in addition at $\alpha=\mu$, and
the Meixner correction at $\alpha=\mu(\gamma+1)/(1-\mu)$, the respective zeros of
$\dif P_2(\alpha)$.

The cancellation of the weight $\psi_n/n!$ in~\eqref{eq:extract} explains why the
generating-function route and the connection formula~\eqref{eq:ConxF1} return the
same polynomial: the normalisation chosen for $G_P$ is inherited verbatim by the
correction series~\eqref{eq:Rdef}, so it plays no role in the extracted coefficient.

\section{Mehler--Heine type formulas}\label{sec:mehlerheine}

We establish Mehler--Heine type formulas for the Sobolev-type families
$\{\Sob{n}\}_{n\ge0}$. Since the mass point $\alpha$ lies outside the support
$[0,+\infty)$, the Sobolev perturbation is not asymptotically negligible under
the classical scaling: it cancels the leading term, shifts the Mehler--Heine scale
by one factor of $n$, and produces a new zero of the limit function at $z=\alpha$.
The resulting limit is independent of $\lambda$ and of $j$.

\begin{lemma}\label{lem:mhproperties}
Let $\varphi$ and $\kappa_n$ be as in~\eqref{eq:mhcommon}, and set $c_\mu=1$ for
Charlier, $c_\mu=1-\mu$ for Meixner. Then, for fixed $w\in\C\setminus[0,+\infty)$,
\begin{equation}\label{eq:mhratios}
\frac{P_n(w)}{P_{n-1}(w)}=-\,\frac{n-1-w}{c_\mu}\bigl(1+O(n^{-1})\bigr),
\quad
\frac{\kappa_n}{\kappa_{n-1}}=-\frac{1}{c_\mu}.
\end{equation}
\end{lemma}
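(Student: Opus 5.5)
The second identity in \eqref{eq:mhratios} is immediate from the definition of $\kappa_n$ in \eqref{eq:mhcommon}. In the Charlier case $\kappa_n/\kappa_{n-1}=-1=-1/c_\mu$. In the Meixner case $\kappa_n/\kappa_{n-1}=(\mu-1)^{-1}=-1/(1-\mu)=-1/c_\mu$. So the substance of the lemma is the first ratio, and it must carry the explicit $O(n^{-1})$ error.

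The naive route is to divide \eqref{eq:mhcommon} at $n$ and $n-1$. This gives
\begin{equation*}
\frac{P_n(w)}{P_{n-1}(w)}=\frac{\kappa_n}{\kappa_{n-1}}\,\frac{\Gamma(n-w)}{\Gamma(n-1-w)}\cdot\frac{\varphi_n(w)}{\varphi_{n-1}(w)}=-\frac{n-1-w}{c_\mu}\cdot\frac{\varphi_n(w)}{\varphi_{n-1}(w)},
\end{equation*}
where $\varphi_n(w)=P_n(w)/(\kappa_n\Gamma(n-w))$. Since $w\notin[0,+\infty)$, we have $1/\Gamma(-w)\neq0$, so $\varphi(w)\neq0$ and the last ratio tends to $1$. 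However, \eqref{eq:mhcommon} is only a limit statement, with no rate. Therefore the main step, and the main obstacle, is to upgrade the Mehler--Heine formula at the fixed point $w$ to
\begin{equation*}
\varphi_n(w)=\varphi(w)\bigl(1+O(n^{-1})\bigr).
\end{equation*}
Once this holds, $\varphi_n/\varphi_{n-1}=1+O(n^{-1})$ and the claim follows.

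To get the rate I would use Darboux's method on the closed forms \eqref{eq:GCmonic}--\eqref{eq:GMmonic}, which gives
\begin{equation*}
P_n(w)=\frac{n!}{\psi_n}\,[t^n]\,G_P(w,t).
\end{equation*}
\begin{itemize}
\item Charlier case: write $G_C(w,t)=e^{\mu}\,e^{t-\mu}(1-t/\mu)^{w}$ and expand the entire factor $e^{t-\mu}$ in powers of $(1-t/\mu)$. This gives $G_C=e^{\mu}\bigl[(1-t/\mu)^{w}-\mu(1-t/\mu)^{w+1}+\cdots\bigr]$.
\item Term by term, $[t^n](1-t/\mu)^{w+m}=\mu^{-n}(-w-m)_n/n!$. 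Relative to the $m=0$ term this is of order $n^{-m}$, since $(-w-m)_n/(-w)_n\sim \frac{\Gamma(-w)}{\Gamma(-w-m)}\,n^{-m}$ by \eqref{eq:stirlingratio}.
\item The remainder after finitely many terms is handled by Darboux's lemma, since $t=\mu$ is the only singularity on $|t|=\mu$.
\item Hence $P_n(w)=(-1)^n e^{\mu}\,\Gamma(n-w)/\Gamma(-w)\,\bigl(1+O(n^{-1})\bigr)$. This uses $(-w)_n=\Gamma(n-w)/\Gamma(-w)$, which is legitimate because $-w\notin\{0,-1,-2,\dots\}$.
\item Meixner case: identical, now with $G_M(w,t)=(1-t/\mu)^{w}(1-t)^{-w-\gamma}$. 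The dominant singularity is $t=\mu<1$, and the factor $(1-t)^{-w-\gamma}$ is analytic near $t=\mu$. Expanding it about $t=\mu$ gives leading value $(1-\mu)^{-w-\gamma}$ and corrections of relative order $n^{-1}$.
\item The prefactor $n!/\psi_n=n!\,\mu^n(\mu-1)^{-n}$, combined with the $\mu^{-n}$ from the coefficient, produces exactly $\kappa_n\Gamma(n-w)\varphi_M(w)$, the factor $(-1)^n$ from $(1-t/\mu)^w$ being absorbed by the sign convention in $\kappa_n$.
\end{itemize}

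Finally I would take the quotient of the refined asymptotics at indices $n$ and $n-1$. The constants $e^{\mu}/\Gamma(-w)$, respectively $(1-\mu)^{-\gamma-w}/\Gamma(-w)$, cancel. The factor $\Gamma(n-w)/\Gamma(n-1-w)=n-1-w$ appears exactly, and the sign/scale ratio is $-1/c_\mu$, which is \eqref{eq:mhratios}. The only delicate bookkeeping is the Meixner scale: one must check that $\mu^{-n}$ from the singularity combines with $\psi_n^{-1}=\mu^n(\mu-1)^{-n}$ to give precisely $(\mu-1)^{-n}=\kappa_n$.
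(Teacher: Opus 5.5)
Your proposal is correct. Its skeleton is the paper's: write the asymptotics at $n$ and $n-1$, divide, use $\Gamma(n-w)=(n-1-w)\Gamma(n-1-w)$, and read off $\kappa_n/\kappa_{n-1}=-1/c_\mu$.

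The difference is at the step you single out. The paper simply writes $P_n(w)=\kappa_n\Gamma(n-w)\varphi(w)\bigl(1+O(n^{-1})\bigr)$ and attributes it to \eqref{eq:mhcommon}. That formula is a bare limit and only gives $1+o(1)$, so the paper's two-line argument rigorously proves only $-\frac{n-1-w}{c_\mu}\bigl(1+o(1)\bigr)$; that weaker form is all Lemma~\ref{lem:prefratio} uses. Your Darboux analysis of the closed forms \eqref{eq:GCmonic}--\eqref{eq:GMmonic} is what actually delivers the stated rate, at the cost of invoking Darboux's lemma for the remainder.

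It also gives more than the lemma claims. You obtain $P_n(w)=\kappa_n\Gamma(n-w)\varphi(w)\bigl(1+a(w)/n+O(n^{-2})\bigr)$ with $a(w)$ independent of $n$; for Charlier, the $m=1$ term gives $a(w)=\mu(w+1)$. Hence the $1/n$ corrections cancel in the quotient and the ratio error is in fact $O(n^{-2})$. This $o(n^{-1})$ accuracy is what Lemma~\ref{lem:bracket} genuinely needs, because there the surviving term $(z-\alpha)/n$ is itself of order $1/n$.

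One small bookkeeping slip: since $[t^n](1-t/\mu)^{w}=\mu^{-n}(-w)_n/n!$, there is no stray $(-1)^n$ from $(1-t/\mu)^w$ to be ``absorbed''. The sign of $\kappa_n$, both $(-1)^n$ for Charlier and $(\mu-1)^{-n}$ for Meixner, comes entirely from $n!/\psi_n$, as your final sentence correctly records.
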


\begin{proof}
In fact, the Mehler--Heine limit~\eqref{eq:mhcommon} at indices
$n$ and $n-1$ reads
\begin{equation*}
    P_n(w)=\kappa_n\,\Gamma(n-w)\,\varphi(w)\bigl(1+O(n^{-1})\bigr),
\quad
P_{n-1}(w)=\kappa_{n-1}\,\Gamma(n-1-w)\,\varphi(w)\bigl(1+O(n^{-1})\bigr).
\end{equation*}
Then, using the functional equation
$\Gamma(n-w)=(n-1-w)\Gamma(n-1-w)$, we deduce
\begin{equation*}
    \frac{P_n(w)}{P_{n-1}(w)}
=\frac{\kappa_n}{\kappa_{n-1}}\,(n-1-w)\bigl(1+O(n^{-1})\bigr).
\end{equation*}
From~\eqref{eq:mhcommon}, $\kappa_n/\kappa_{n-1}=-1$ for
Charlier and $\kappa_n/\kappa_{n-1}=(\mu-1)^{-1}$ for Meixner; since
$c_\mu=1$ and $c_\mu=1-\mu$ respectively, both equal $-c_\mu^{-1}$, which yields
the first identity in~\eqref{eq:mhratios}.
\end{proof}

\begin{lemma}\label{lem:bracket}
Let $\alpha\in\R_-$, $j\ge1$, and set
\begin{equation}\label{eq:Ndef}
    \Nf_n(z)=\dif^{j}P_{n-1}(\alpha)\,P_n(z)-\dif^{j}P_n(\alpha)\,P_{n-1}(z).
\end{equation}
Then, uniformly on compact subsets of $\C$,
\begin{equation}\label{eq:bracketover}
\frac{\Nf_n(z)}{\dif^{j}P_{n-1}(\alpha)}
=-\,\frac{z-\alpha}{n}\,\kappa_n\,\Gamma(n-z)\,\varphi(z)\,\bigl(1+o(1)\bigr),
\quad n\to\infty.
\end{equation}
\end{lemma}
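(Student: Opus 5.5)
The plan is to avoid dividing by $P_{n-1}(z)$, which vanishes at points of $[0,+\infty)$ inside a compact set, and to work additively. Dividing~\eqref{eq:Ndef} by $\dif^{j}P_{n-1}(\alpha)$ gives
\begin{equation*}
\frac{\Nf_n(z)}{\dif^{j}P_{n-1}(\alpha)}=P_n(z)-r_n\,P_{n-1}(z),\quad r_n=\frac{\dif^{j}P_n(\alpha)}{\dif^{j}P_{n-1}(\alpha)}.
\end{equation*}
Only $r_n$ involves division, and it is evaluated at the exterior point $\alpha<0$, where no zeros occur.

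\textbf{Step 1: expansion of $r_n$.} By the forward shift relation~\eqref{eq:fwdgeneral},
\begin{equation*}
r_n=\frac{n}{n-j}\,\frac{\widetilde P_{n-j}(\alpha)}{\widetilde P_{n-1-j}(\alpha)},
\end{equation*}
where $\widetilde P$ is the shifted family: Charlier itself, or Meixner with parameter $\gamma+j$. Lemma~\ref{lem:mhproperties} only gives the ratio up to a factor $1+O(n^{-1})$. That is not sufficient, because the leading terms of order $n$ will cancel, and an $O(1)$ additive error would destroy the limit. I therefore need the sharper additive statement
\begin{equation*}
\frac{\widetilde P_m(\alpha)}{\widetilde P_{m-1}(\alpha)}=-\frac{m-1-\alpha}{c_\mu}+d+O(m^{-1}),
\end{equation*}
with an explicit constant $d$. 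I will obtain it from the three-term recurrence~\eqref{eq:ttrr}. Writing $\rho_m=\widetilde P_m(\alpha)/\widetilde P_{m-1}(\alpha)$, the recurrence gives
\begin{equation*}
\rho_m=(\alpha-\alpha_{m-1})-\frac{\beta_{m-1}}{\rho_{m-1}}.
\end{equation*}
Inserting the first-order information from Lemma~\ref{lem:mhproperties} into $\beta_{m-1}/\rho_{m-1}$ produces the constant term with an $O(1/m)$ error. For Charlier this gives $\rho_m=-(m-1-\alpha)+O(1/m)$. Multiplying by $n/(n-j)=1+j/n+O(n^{-2})$ then yields $r_n=-\frac{n-1-\alpha}{c_\mu}+e+O(n^{-1})$ with an explicit constant $e$; in the Charlier case $e=0$. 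I expect this bookkeeping, in particular tracking the parameter shift $\gamma\mapsto\gamma+j$ in the Meixner case, to be the main obstacle.

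\textbf{Step 2: eliminate $P_n(z)$.} Using~\eqref{eq:ttrr} I write $P_n(z)=(z-\alpha_{n-1})P_{n-1}(z)-\beta_{n-1}P_{n-2}(z)$, so that
\begin{equation*}
\frac{\Nf_n(z)}{\dif^{j}P_{n-1}(\alpha)}=\bigl(z-\alpha_{n-1}-r_n\bigr)P_{n-1}(z)-\beta_{n-1}P_{n-2}(z).
\end{equation*}
The coefficient $z-\alpha_{n-1}-r_n$ is now $O(1)$, because the linear-in-$n$ parts cancel. For Charlier it equals $z-\alpha-\mu+O(1/n)$. In the second term, Mehler--Heine~\eqref{eq:mhcommon} at index $n-2$, the identity $\Gamma(n-1-z)=(n-2-z)\Gamma(n-2-z)$ and the value of $\kappa_{n-1}/\kappa_{n-2}$ from~\eqref{eq:mhratios} give
\begin{equation*}
\beta_{n-1}P_{n-2}(z)=-\mu\,\kappa_{n-1}\Gamma(n-1-z)\varphi(z)+o\bigl(\Gamma(n-1-z)\bigr)
\end{equation*}
in the Charlier case, and analogously for Meixner. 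The constants $-\mu$ (from $\beta_{n-1}$) and $-\mu$ (from the coefficient of $P_{n-1}$) combine so that the total becomes $(z-\alpha)\,\kappa_{n-1}\Gamma(n-1-z)\varphi(z)+o(\Gamma(n-1-z))$. The statement that the constants cancel exactly, leaving $z-\alpha$, is precisely what Step~1 has to deliver.

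\textbf{Step 3: shift the index.} I will use $\kappa_{n-1}=-c_\mu\kappa_n$ and $\Gamma(n-1-z)=\Gamma(n-z)/(n-1-z)=\Gamma(n-z)\,n^{-1}(1+O(n^{-1}))$. This converts the expression into the form $-\frac{z-\alpha}{n}\kappa_n\Gamma(n-z)\varphi(z)$ of~\eqref{eq:bracketover}. Here the Meixner factor $c_\mu$ has to be reconciled with the constant produced in Step~1, and I will check this carefully.

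All errors are additive $o\bigl(|\kappa_n\Gamma(n-z)|/n\bigr)$, uniformly on compact sets, since~\eqref{eq:mhcommon} is uniform there. Accordingly, I read the factor $1+o(1)$ in~\eqref{eq:bracketover} in this additive sense at the zeros $z=\alpha$ and $z\in\{0,1,2,\dots\}$ of the limit.
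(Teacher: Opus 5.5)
Your objection to direct substitution is correct, and it applies equally to the paper's own proof. That proof inserts $1+o(1)$ asymptotics for $r_n$, $P_n(z)$ and $P_{n-1}(z)$, and then reads off a bracket of size $O(n^{-1})$.

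For the Charlier family your recurrence device does repair this. You get $\beta_{m-1}/\rho_{m-1}=-\mu+o(1)$, so $r_n=-(n-1-\alpha)+o(1)$. The coefficient $z-\alpha_{n-1}-r_n=z-\alpha-\mu+o(1)$ is then bounded, and Steps~2--3 go through using only the $o(1)$ form of \eqref{eq:mhcommon}.

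The gap is the Meixner case. There both of your structural claims fail, and the reason is not the shift $\gamma\mapsto\gamma+j$.

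In Step~1, the shifted family has $\beta_{m-1}\sim\mu m^{2}/(1-\mu)^{2}$ while $\rho_{m-1}\sim-m/(1-\mu)$. So $\beta_{m-1}/\rho_{m-1}$ is of order $m$, and a relative $O(m^{-1})$ error in $\rho_{m-1}$ becomes an additive $O(1)$ error. Write $\rho_m=\rho^0_m+\epsilon_m$ with $\rho^0_m=-\frac{m-1-\alpha}{1-\mu}$. One step of the recurrence then only gives $\epsilon_m=\frac{\beta_{m-1}}{\rho^0_{m-1}\rho_{m-1}}\,\epsilon_{m-1}+O(m^{-1})$. Its multiplier tends to $\mu\neq0$, so the constant $d$ is not determined.

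In Step~2, $\alpha_{n-1}$ grows with slope $(1+\mu)/(1-\mu)$ but $-r_n$ only with slope $1/(1-\mu)$. Hence $z-\alpha_{n-1}-r_n=-\frac{\mu n}{1-\mu}+O(1)$ is \emph{not} bounded, and $\beta_{n-1}P_{n-2}(z)$ has modulus of order $n\,|\kappa_{n-1}\Gamma(n-1-z)|$. The two terms cancel at leading order again. The relative Mehler--Heine errors at indices $n-1$ and $n-2$, multiplied by these $O(n)$ coefficients, are at least as large as the $O(|\kappa_{n-1}\Gamma(n-1-z)|)$ answer. So ``analogously for Meixner'' fails, and Step~2 only moves the original difficulty one index down.

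Two repairs are available.

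(i) Treat both one-step relations as contractions. For the ratio, the multiplier is eventually below some $q<1$, so $|\epsilon_m|\le q|\epsilon_{m-1}|+Cm^{-1}$ gives $\epsilon_m=O(m^{-1})$, that is, $d=0$. For the bracket, set $X_n=P_n-r_nP_{n-1}$. Replace $P_{n-2}$ by $(P_{n-1}-X_{n-1})/r_{n-1}$ rather than by its asymptotics. This gives the exact identity $X_n=\bigl(z-\alpha_{n-1}-r_n-\beta_{n-1}/r_{n-1}\bigr)P_{n-1}+\frac{\beta_{n-1}}{r_{n-1}}X_{n-1}$. By the repaired Step~1, its first coefficient is $z-\alpha+o(1)$ in both families. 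Then $Y_n=\frac{n\,X_n}{\kappa_n\Gamma(n-z)}$ satisfies $Y_n=-c_\mu(z-\alpha)\varphi(z)+\bigl(1-c_\mu+o(1)\bigr)Y_{n-1}+o(1)$ uniformly on compact sets. Since $0\le 1-c_\mu<1$, this forces $Y_n\to-(z-\alpha)\varphi(z)$. The feedback factor $1-c_\mu$ vanishes for Charlier, which is exactly why your one-step argument suffices there.

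(ii) Alternatively, prove a two-term Mehler--Heine expansion $P_n(z)=\kappa_n\Gamma(n-z)\bigl(\varphi(z)+b(z)n^{-1}+O(n^{-2})\bigr)$, uniformly on compact sets, with $b$ entire and independent of $n$. Darboux's method applied to \eqref{eq:GCmonic}--\eqref{eq:GMmonic} does this: expand $e^{t}$, resp.\ $(1-t)^{-z-\gamma}$, in powers of $1-t/\mu$. This gives $b=\mu(z+1)\varphi$, resp.\ $b=\mu(z+\gamma)(z+1)\varphi/(1-\mu)$. Consecutive ratios are then accurate to relative order $n^{-2}$, and direct substitution becomes rigorous for both families without the recurrence.
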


\begin{proof}
By the forward-shift identity~\eqref{eq:fwdgeneral} and the consecutive-index
ratio~\eqref{eq:mhratios} applied to $\{\widetilde P_m\}$, we arrive
\begin{equation*}
    \frac{\dif^{j}P_n(\alpha)}{\dif^{j}P_{n-1}(\alpha)}
=\frac{[n]_j}{[n-1]_j}\,
\frac{\widetilde P_{n-j}(\alpha)}{\widetilde P_{n-1-j}(\alpha)}
=-\frac{1}{c_\mu}\,\frac{n}{n-j}\,(n-1-j-\alpha)\bigl(1+o(1)\bigr).
\end{equation*}
Writing $P_m(z)=\kappa_m\Gamma(m-z)\varphi(z)(1+o(1))$ and using
\eqref{eq:mhratios}, we deduce
\begin{equation*}
    \frac{\Nf_n(z)}{\dif^{j}P_{n-1}(\alpha)}
=P_n(z)-\frac{\dif^{j}P_n(\alpha)}{\dif^{j}P_{n-1}(\alpha)}\,P_{n-1}(z)
=\kappa_n\Gamma(n-z)\varphi(z)
\Bigl[1-\frac{n(n-1-j-\alpha)}{(n-j)(n-1-z)}\Bigr](1+o(1)).
\end{equation*}
After straightforward calculations, we obtain
\begin{equation*}
    \dfrac{n(n-1-j-\alpha)}{(n-j)(n-1-z)}=1+\dfrac{z-\alpha}{n}+O(n^{-2}),
\end{equation*}
the
bracket equals $-(z-\alpha)n^{-1}(1+o(1))$, which is~\eqref{eq:bracketover}.
\end{proof}

\begin{lemma}\label{lem:kernelgrowth}
Let $\alpha\in\R_-$ and $j\ge1$. Then $\Kn^{(j,j)}_{n-1}(\alpha,\alpha)\to\infty$ as
$n\to\infty$. More precisely, there exist $q>1$ and $n_0\in\N$, depending only on
the family and on $j$, such that
\begin{equation}\label{eq:kernelgrowth}
\Kn^{(j,j)}_{n-1}(\alpha,\alpha)\ge q^{n},\quad n\ge n_0.
\end{equation}
In particular, for each fixed $p>0$ one has
$n^{-p}\,\Kn^{(j,j)}_{n-1}(\alpha,\alpha)\to\infty$ as $n\to\infty$.
\end{lemma}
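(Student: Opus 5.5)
The plan is to discard all but the last term of the kernel sum, exactly as was done for \eqref{eq:deltalower} in the proof of Lemma~\ref{lem:ABbound}. From \eqref{eq:Kij}, every summand of $\Kn^{(j,j)}_{n-1}(\alpha,\alpha)$ is nonnegative, so
\begin{equation*}
\Kn^{(j,j)}_{n-1}(\alpha,\alpha)\ge\frac{\bigl(\dif^{j}P_{n-1}(\alpha)\bigr)^{2}}{\norm{P_{n-1}}^{2}}
=\frac{[n-1]_j^{2}\,\widetilde P_{n-1-j}(\alpha)^{2}}{\norm{P_{n-1}}^{2}},\quad n\ge j+1,
\end{equation*}
by the forward-shift formula \eqref{eq:fwdgeneral}. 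Here $\widetilde P_m$ denotes the lowered family: $\Charlier{m}$ in the Charlier case and $M^{(\gamma+j,\mu)}_m$ in the Meixner case, by \eqref{eq:fwdMeixner}. It then suffices to show that this single term eventually exceeds $q^{n}$. The last assertion of the lemma follows at once, since $q^{n}n^{-p}\to\infty$.

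The first step is a lower bound for $|\widetilde P_{n-1-j}(\alpha)|$. I would use the Mehler--Heine formula \eqref{eq:mhcommon} for the lowered family (with parameter $\gamma+j$ in the Meixner case), arguing as in Lemma~\ref{lem:pointwise}. Since $\alpha<0$, the point $-\alpha$ is not a pole of $\Gamma$, so the limit $\varphi(\alpha)$ is nonzero. Hence for large $m$,
\begin{equation*}
|\widetilde P_m(\alpha)|\ge c_0\,|\kappa_m|\,\Gamma(m-\alpha),
\end{equation*}
where $|\kappa_m|=1$ for Charlier and $|\kappa_m|=(1-\mu)^{-m}$ for Meixner. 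Only large $n$ matter, so the finitely many small indices need no treatment. Substituting this bound together with the squared norms from Table~\ref{tab:families} gives the following.

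In the Charlier case,
\begin{equation*}
\Kn^{(j,j)}_{n-1}(\alpha,\alpha)\ge c_0^{2}\,[n-1]_j^{2}\,\frac{\Gamma(n-1-j-\alpha)^{2}}{(n-1)!\,\mu^{n-1}}.
\end{equation*}
By the Stirling ratio asymptotics \eqref{eq:elemasymp}, we have $\Gamma(n-1-j-\alpha)^{2}/(n-1)!\asymp (n-1)!\,n^{-2j-2\alpha-\text{const}}$, which grows factorially. The bound therefore beats $q^{n}$ for every $q>1$.

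In the Meixner case, the factor $(1-\mu)^{-2(n-1-j)}$ coming from $\kappa^{2}$ cancels against the $(1-\mu)^{-2(n-1)}$ in the norm, up to a constant. Writing $(\gamma)_{n-1}=\Gamma(n-1+\gamma)/\Gamma(\gamma)$, the remaining Gamma ratio
\begin{equation*}
\frac{\Gamma(n-1-j-\alpha)^{2}}{(n-1)!\,\Gamma(n-1+\gamma)}
\end{equation*}
is of polynomial order in $n$ by \eqref{eq:elemasymp}. What survives is the geometric factor $\mu^{-(n-1)}$ with $\mu<1$, so any $q\in(1,1/\mu)$ works once $n$ is large enough.

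The main obstacle is bookkeeping rather than any new idea. One must confirm that the Mehler--Heine formula \eqref{eq:mhMclassical} is used for the shifted parameter $\gamma+j$, and that the powers of $(1-\mu)$ really cancel so that no factor $(1-\mu)^{\pm n}$ is left to compete with $\mu^{-n}$. I would check this cancellation carefully first. If a stray factor did remain, it would only change the admissible range of $q$, provided the net geometric base stays below $1/\mu$.
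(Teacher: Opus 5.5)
Your proposal is correct and follows essentially the paper's argument. Both proofs bound $\Kn^{(j,j)}_{n-1}(\alpha,\alpha)$ below by its top term $t_{n-1}$, and both control that term through the Mehler--Heine formula for the descended family together with the squared norms of Table~\ref{tab:families}. The only difference is how the growth of $t_{n-1}$ is extracted. The paper shows $t_{k+1}/t_k\to+\infty$ (Charlier) or $1/\mu$ (Meixner) and iterates this ratio bound. You instead evaluate $t_{n-1}$ directly via Stirling, which also makes the rates explicit; your $(1-\mu)$ bookkeeping is right, as the powers cancel to the constant $(1-\mu)^{\gamma+2j}$.
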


\begin{proof}
By the definition~\eqref{eq:Kij} of the iterated kernel and the forward shift
operator~\eqref{eq:fwdgeneral}, the differences $\dif^{j}P_k(\alpha)=[k]_j\widetilde
P_{k-j}(\alpha)$ vanish for $k<j$, so that
\begin{equation}\label{eq:Kjjdescent}
\Kn^{(j,j)}_{n-1}(\alpha,\alpha)=\sum_{k=j}^{n-1}t_k,
\quad
t_k=\frac{\bigl(\dif^{j}P_k(\alpha)\bigr)^2}{\norm{P_k}^2}=\frac{[k]_j^{2}\,\widetilde P_{k-j}(\alpha)^2}{\norm{P_k}^2}>0.
\end{equation}

We first establish that the series diverges. Since the summands are positive, it
suffices to show $t_k\to\infty$, which we do through the ratio $t_{k+1}/t_k$. The
Mehler--Heine limit~\eqref{eq:mhcommon} for the descended family gives
\begin{equation}\label{eq:Ptildeasympt}
\widetilde P_{k-j}(\alpha)=\kappa_{k-j}\,\Gamma(k-j-\alpha)\,\widetilde\varphi(\alpha)
\bigl(1+o(1)\bigr),
\end{equation}
where $\widetilde\varphi(\alpha)\neq0$ precisely because $\alpha\in\R_-$ avoids the
nonnegative integers, the only zeros of $\widetilde\varphi$; this is the sole place
where the hypothesis $\alpha<0$ is used. Squaring~\eqref{eq:Ptildeasympt} and
forming the ratio of consecutive terms, the three structural factors of $t_k$
evolve as follows. The falling factorial satisfies
\begin{equation*}
\frac{[k+1]_j}{[k]_j}=\frac{k+1}{k+1-j}\to1,
\end{equation*}
so its square contributes a factor tending to $1$. The Gamma factor evolves through
the functional equation
\begin{equation*}
\frac{\Gamma(k+1-j-\alpha)}{\Gamma(k-j-\alpha)}=k-j-\alpha,
\end{equation*}
contributing $(k-j-\alpha)^2$ after squaring. The remaining factors, the scale
$\kappa_{k-j}^2$ and the squared norm $\norm{P_k}^2$, are read from
\eqref{eq:mhcommon} and Table~\ref{tab:families}, respectively. Therefore,
\begin{equation*}
\frac{\norm{P_{k+1}}^2}{\norm{P_k}^2}=\begin{cases}
(k+1)\mu, & \mbox{for Charlier},\\[10pt]
\dst\frac{(k+1)(k+\gamma)\mu}{(1-\mu)^2}, & \mbox{for Meixner},
\end{cases}
\end{equation*}
while
\begin{equation*}
\frac{\kappa_{k+1-j}^2}{\kappa_{k-j}^2}=\begin{cases}
1, & \mbox{for Charlier},\\[10pt]
\dst(1-\mu)^{-2}, & \mbox{for Meixner}.
\end{cases}
\end{equation*}
Collecting these,
\begin{equation*}\label{eq:tratio}
\frac{t_{k+1}}{t_k}
=\frac{\kappa_{k+1-j}^2}{\kappa_{k-j}^2}\,(k-j-\alpha)^2\,
\frac{\norm{P_k}^2}{\norm{P_{k+1}}^2}\bigl(1+o(1)\bigr)
=\begin{cases}
\dfrac{(k-j-\alpha)^2}{(k+1)\mu}\bigl(1+o(1)\bigr),&\text{Charlier},\\[10pt]
\dfrac{(k-j-\alpha)^2}{(k+1)(k+\gamma)\mu}\bigl(1+o(1)\bigr),&\text{Meixner}.
\end{cases}
\end{equation*}
In the Meixner case the factor $(1-\mu)^{-2}$ from the scale cancels the
$(1-\mu)^{-2}$ in the norm ratio. Consequently
\begin{equation}\label{eq:tratiolimit}
\lim_{k\to\infty}\frac{t_{k+1}}{t_k}
=\begin{cases}
+\infty,&\text{Charlier},\\[10pt]
\dst\frac{1}{\mu},&\text{Meixner}.
\end{cases}
\end{equation}
We now draw from~\eqref{eq:tratiolimit} the geometric growth of the summands. Denote
by $L\in(1,+\infty]$ the limit in~\eqref{eq:tratiolimit}, which exceeds $1$ in both
families: it is $+\infty$ for Charlier, and equals $\mu^{-1}>1$ for Meixner because
$0<\mu<1$. Choose any $q$ with $1<q<L$, for instance $q=2$ in the Charlier case and
\begin{equation*}
    q=\frac{1+\dst\frac{1}{\mu}}{2},
\end{equation*}
in the Meixner case. By the definition of the limit, applied with
the threshold $q$, there exists $k_0\ge j$ such that
\begin{equation}\label{eq:ratiobound}
\frac{t_{k+1}}{t_k}\ge q,\quad k\ge k_0 .
\end{equation}
Iterating~\eqref{eq:ratiobound}, that is, multiplying the inequalities for
$k=k_0,k_0+1,\dots,m-1$ and telescoping the left-hand side, we obtain
\begin{equation}\label{eq:tgeom}
t_{m}\ge t_{k_0}\,q^{\,m-k_0},\quad m\ge k_0 .
\end{equation}
This is the precise sense in which the terms increase at least geometrically: they
dominate the geometric sequence $t_{k_0}q^{-k_0}\cdot q^{m}$ of ratio $q>1$. Three
consequences follow at once. First, since $t_{k_0}>0$ and $q>1$, the right-hand side
of~\eqref{eq:tgeom} tends to $+\infty$, hence $t_m\to\infty$. Second, the terms of
the series~\eqref{eq:Kjjdescent} therefore do not tend to $0$, so the series
diverges. Third, and quantitatively, summing~\eqref{eq:tgeom} over $m$ gives
\begin{equation*}
\Kn^{(j,j)}_{n-1}(\alpha,\alpha)=\sum_{k=j}^{n-1}t_k
\ge\sum_{k=k_0}^{n-1}t_{k_0}\,q^{\,k-k_0}
=t_{k_0}\,\frac{q^{\,n-k_0}-1}{q-1}\longrightarrow\infty ,
\end{equation*}
so that $\Kn^{(j,j)}_{n-1}(\alpha,\alpha)\to\infty$, as asserted.

It remains to sharpen this into the geometric lower bound~\eqref{eq:kernelgrowth}.
Since the kernel is a sum of positive terms, in particular
\begin{equation}\label{eq:Kgeqtop}
\Kn^{(j,j)}_{n-1}(\alpha,\alpha)\ge t_{n-1},
\end{equation}
and~\eqref{eq:tgeom} with $m=n-1$ gives $t_{n-1}\ge c_0\,q^{n}$, where
\begin{equation*}
c_0=t_{k_0}\,q^{-k_0-1}>0 .
\end{equation*}
Fix any $q'\in(1,q)$. Since $c_0\,(\frac{q}{q'})^{n}\to\infty$, there is $n_0\ge k_0$ with
$c_0\,q^{n}\ge(q')^{n}$ for all $n\ge n_0$; combined with~\eqref{eq:Kgeqtop}, this yields
\begin{equation*}
\Kn^{(j,j)}_{n-1}(\alpha,\alpha)\ge t_{n-1}\ge(q')^{n},\quad n\ge n_0,
\end{equation*}
which is~\eqref{eq:kernelgrowth} with $q'$ in the role of $q$. Finally, fix $p>0$;
since $q'>1$ one has
\begin{equation*}
    \frac{(q')^{n}}{n^{p}}\to\infty,
\end{equation*}
so
$n^{-p}\Kn^{(j,j)}_{n-1}(\alpha,\alpha)\to\infty$.
\end{proof}

\begin{lemma}\label{lem:Sndecomp}
Let $\{\Sob{n}\}_{n\ge0}$ be the Sobolev-type family of~\eqref{eq:innerprod} with
fixed $j\ge1$, $\alpha\in\R_-$ and $\lambda>0$. For $j\le k\le n-1$ define the
positive weights and confluent brackets
\begin{equation}\label{eq:Qdef}
Q_{n,k}(z)=P_n(z)-\dif^{j}P_n(\alpha)R_k(z),\quad R_k(z)=\frac{P_k(z)}{\dif^{j}P_k(\alpha)}.
\end{equation}
Then, for every $z\in\C$ and every $n>j$,
\begin{equation}\label{eq:Snexact}
\Sob{n}(z)
=\delta_n^{-1}\left(P_n(z)
+\lambda
\sum_{k=j}^{n-1}t_k\,Q_{n,k}(z)\right),
\end{equation}
and consequently, uniformly on compact subsets of $\C$,
\begin{equation}\label{eq:weightedavg}
\frac{\kappa_n^{-1}n}{\Gamma(n-z)}\,\Sob{n}(z)
=\frac{\displaystyle\sum_{i=0}^{n-1-j}\tau_{n,i}\,
\frac{\kappa_n^{-1}n}{\Gamma(n-z)}\,Q_{n,\,n-1-i}(z)}
{\displaystyle\sum_{i=0}^{n-1-j}\tau_{n,i}}\,\bigl(1+o(1)\bigr),
\quad
\tau_{n,i}=\frac{t_{n-1-i}}{t_{n-1}}.
\end{equation}
\end{lemma}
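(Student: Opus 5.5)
The identity \eqref{eq:Snexact} is purely algebraic, and I would derive it directly from the connection formula \eqref{eq:SobCon}. Writing out the kernel by \eqref{eq:Kij},
\begin{equation*}
\Kn^{(0,j)}_{n-1}(z,\alpha)=\sum_{k=0}^{n-1}\frac{P_k(z)\,\dif^{j}P_k(\alpha)}{\norm{P_k}^2},
\end{equation*}
the terms with $k<j$ vanish because $\dif^{j}P_k\equiv0$ there, by \eqref{eq:fwdgeneral}. For $k\ge j$, I would rewrite $\dif^{j}P_k(\alpha)/\norm{P_k}^2=t_k/\dif^{j}P_k(\alpha)$, with $t_k$ as in \eqref{eq:Kjjdescent}. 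This requires $\dif^{j}P_k(\alpha)\neq0$. That holds because $\dif^{j}P_k(\alpha)=[k]_j\widetilde P_{k-j}(\alpha)$, where $\widetilde P$ is again a Charlier family, or a Meixner family with parameter $\gamma+j>0$. In both cases it is orthogonal with respect to a positive measure on $\{0,1,\dots\}$, so its zeros lie in $[0,\infty)$, exactly as in the proof of Lemma~\ref{lem:pointwise}. Hence $R_k$ is well defined.

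Multiplying \eqref{eq:SobCon} by $\delta_n=1+\lambda\sum_{k=j}^{n-1}t_k$ gives
\begin{equation*}
\delta_n\Sob{n}(z)=P_n(z)+\lambda\sum_{k=j}^{n-1}t_kP_n(z)-\lambda\,\dif^{j}P_n(\alpha)\sum_{k=j}^{n-1}t_kR_k(z).
\end{equation*}
Grouping the two sums term by term yields $\sum_{k=j}^{n-1} t_kQ_{n,k}(z)$, which is \eqref{eq:Snexact}.

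For \eqref{eq:weightedavg}, I would reindex with $k=n-1-i$ and factor $t_{n-1}$ out of both $\delta_n$ and the sum. This gives $\delta_n=\lambda t_{n-1}\bigl(\sum_i\tau_{n,i}+(\lambda t_{n-1})^{-1}\bigr)$. Since $\tau_{n,0}=1$, we have $\sum_i\tau_{n,i}\ge1$. Lemma~\ref{lem:kernelgrowth} (indeed \eqref{eq:tgeom}) gives $t_{n-1}\to\infty$, so $\delta_n=\lambda t_{n-1}\sum_i\tau_{n,i}\,(1+o(1))$. Multiplying \eqref{eq:Snexact} by the normalisation $\kappa_n^{-1}n/\Gamma(n-z)$, the main sum produces exactly the weighted average in \eqref{eq:weightedavg}. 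There remains the isolated term
\begin{equation*}
\frac{\kappa_n^{-1}n\,P_n(z)/\Gamma(n-z)}{\lambda t_{n-1}\sum_i\tau_{n,i}}.
\end{equation*}

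The main obstacle is controlling this isolated term uniformly on compacts. By the uniform Mehler--Heine limit \eqref{eq:mhcommon}, its numerator is $n\varphi(z)(1+o(1))$, which is $O(n)$ uniformly on compact sets. Its denominator is at least $\lambda t_{n-1}\ge\lambda (q')^{n}$ for $n\ge n_0$, by Lemma~\ref{lem:kernelgrowth}. The term is therefore $O(n(q')^{-n})=o(1)$ uniformly.

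One caveat concerns the form of the error. What this argument directly yields is the average plus an additive $o(1)$, not a multiplicative $(1+o(1))$. I would state that the $(1+o(1))$ in \eqref{eq:weightedavg} is to be read in this additive sense, which is harmless wherever the weighted average stays bounded away from zero. If the multiplicative form is genuinely needed, one would additionally use Lemma~\ref{lem:bracket} to show that the average has a nonzero limit away from $z=\alpha$ and the nonnegative integers. That step I would defer to the subsequent theorem.
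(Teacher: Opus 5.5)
Your proposal is correct and follows essentially the paper's route: expanding $\Kn^{(0,j)}_{n-1}(z,\alpha)$ and $\delta_n$ over $k\ge j$, grouping into $\sum_k t_kQ_{n,k}(z)$, discarding the isolated $P_n(z)/\delta_n$ term by Lemma~\ref{lem:kernelgrowth}, and normalising the weighted average by $t_{n-1}$ (your nonvanishing argument via $\dif^jP_k(\alpha)=[k]_j\widetilde P_{k-j}(\alpha)$ is cleaner than the paper's appeal to the zeros of $P_k$). Your caveat is also well founded: the paper simply ``absorbs'' the additive $o(1)$ into the factor $(1+o(1))$, which cannot hold literally at zeros of the weighted average where $P_n$ does not vanish (by Hurwitz such zeros occur near $z=\alpha$, and there the exact identity gives $\Sob{n}(z)=P_n(z)/\delta_n\neq0$), and the additive form you prove is all that Proposition~\ref{prop:closedform} uses.
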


\begin{proof}
Since $\alpha<0$ lies outside the convex hull of the support of the orthogonality
measure, whose interior contains the zeros of $\{P_k\}$, one has
$\dif^{j}P_k(\alpha)\neq0$ for every $k\ge j$; hence the weights and
brackets~\eqref{eq:Qdef} are well defined and $t_k>0$.

We first establish the exact identity~\eqref{eq:Snexact}. Using the definitions of
the kernels~\eqref{eq:Kij} and~\eqref{eq:Kjjdescent}, we obtain the exact identity
\begin{eqnarray*}
P_n(z)\,\Kn^{(j,j)}_{n-1}(\alpha,\alpha)-\dif^{j}P_n(\alpha)\,\Kn^{(0,j)}_{n-1}(z,\alpha)
&=&\sum_{k=j}^{n-1}\frac{\left(\dif^{j}P_k(\alpha)\right)^2}{\norm{P_k}^2}
\Bigl(P_n(z)-\dif^{j}P_n(\alpha)R_k(z)\Bigr)\nonumber\\
&=&\sum_{k=j}^{n-1}t_k\,Q_{n,k}(z).\label{eq:combination}
\end{eqnarray*}
From which it follows that
\begin{eqnarray*}
P_n(z)(1+\lambda\,\Kn^{(j,j)}_{n-1}(\alpha,\alpha))
-\lambda\,\dif^{j}P_n(\alpha)\,\Kn^{(0,j)}_{n-1}(z,\alpha)
=P_n(z)+\lambda\sum_{k=j}^{n-1}t_k\,Q_{n,k}(z).
\end{eqnarray*}
Dividing the above identity by $\delta_n$ and
invoking~\eqref{eq:SobCon}, we deduce~\eqref{eq:Snexact}, valid for every $z\in\C$
and $n>j$.

We now derive~\eqref{eq:weightedavg}. Fix a compact set
$K\subset\C\setminus[0,+\infty)$ and multiply~\eqref{eq:Snexact} by
\begin{equation}\label{eq:factorMH}
    \frac{\kappa_n^{-1}n}{\Gamma(n-z)},
\end{equation}
we treat the two resulting terms separately. For the
first, the Mehler--Heine limit~\eqref{eq:mhcommon} gives
\begin{equation*}
    \frac{P_n(z)}{\kappa_n\Gamma(n-z)}\to\varphi(z), 
\end{equation*}
uniformly on $K$. Then, from
Lemma~\ref{lem:kernelgrowth}, we have
\begin{equation}\label{eq:firstterm}
\frac{n}{1+\lambda\Kn^{(j,j)}_{n-1}(\alpha,\alpha)}\cdot
\frac{P_n(z)}{\kappa_n\Gamma(n-z)}
=\varphi(z)\,\frac{n}{1+\lambda\Kn^{(j,j)}_{n-1}(\alpha,\alpha)}\bigl(1+o(1)\bigr)
\longrightarrow0,
\end{equation}
uniformly on $K$, since $\varphi$ is bounded on $K$.

For the second term, since Lemma~\ref{lem:kernelgrowth} gives
$\Kn^{(j,j)}_{n-1}(\alpha,\alpha)\to\infty$, the prefactor satisfies
\begin{equation}\label{eq:prefactor}
\frac{\lambda\,\Kn^{(j,j)}_{n-1}(\alpha,\alpha)}
{1+\lambda\,\Kn^{(j,j)}_{n-1}(\alpha,\alpha)}
=\frac{1}{1+\bigl(\lambda\,\Kn^{(j,j)}_{n-1}(\alpha,\alpha)\bigr)^{-1}}
=1+o(1),\quad n\to\infty,
\end{equation}
On the other hand, multiplying and dividing the second term
of~\eqref{eq:Snexact} by $\Kn^{(j,j)}_{n-1}(\alpha,\alpha)$, we obtain
\begin{equation}\label{eq:secondterm}
\frac{\lambda}{1+\lambda\Kn^{(j,j)}_{n-1}(\alpha,\alpha)}\sum_{k=j}^{n-1}t_k\,Q_{n,k}(z)
=\frac{\lambda\,\Kn^{(j,j)}_{n-1}(\alpha,\alpha)}{1+\lambda\Kn^{(j,j)}_{n-1}(\alpha,\alpha)}
\cdot\frac{\displaystyle\sum_{k=j}^{n-1}t_k\,Q_{n,k}(z)}{\displaystyle\sum_{k=j}^{n-1}t_k},
\end{equation}
whose first factor is $1+o(1)$ by~\eqref{eq:prefactor} and whose second factor is a
weighted average of the brackets $Q_{n,k}$ with the positive weights $t_k$.

It remains to reindex this weighted average from the top of the sum, where the
weights are largest. Setting $i=n-1-k$, so that $k=n-1-i$ runs over
$0\le i\le n-1-j$ as $k$ runs from $j$ to $n-1$, and dividing numerator and
denominator by the top weight $t_{n-1}>0$, the normalised weights
$\tau_{n,i}$ appear, with $\tau_{n,0}=1$, and
\begin{equation*}
\frac{\displaystyle\sum_{k=j}^{n-1}t_k\,Q_{n,k}(z)}{\displaystyle\sum_{k=j}^{n-1}t_k}
=\frac{\displaystyle\sum_{i=0}^{n-1-j}t_{n-1-i}\,Q_{n,\,n-1-i}(z)}
{\displaystyle\sum_{i=0}^{n-1-j}t_{n-1-i}}
=\frac{\displaystyle\sum_{i=0}^{n-1-j}\tau_{n,i}\,Q_{n,\,n-1-i}(z)}
{\displaystyle\sum_{i=0}^{n-1-j}\tau_{n,i}},
\end{equation*}
the division by $t_{n-1}$ leaving the quotient unchanged. Multiplying
by \eqref{eq:factorMH}, which enters each numerator term through
$Q_{n,\,n-1-i}$, and combining with~\eqref{eq:secondterm}, we arrive
\begin{equation*}
\frac{\kappa_n^{-1}n}{\Gamma(n-z)}\cdot
\frac{\lambda}{1+\lambda\Kn^{(j,j)}_{n-1}(\alpha,\alpha)}\sum_{k=j}^{n-1}t_k\,Q_{n,k}(z)
=\frac{\displaystyle\sum_{i=0}^{n-1-j}\tau_{n,i}\,
\frac{\kappa_n^{-1}n}{\Gamma(n-z)}\,Q_{n,\,n-1-i}(z)}
{\displaystyle\sum_{i=0}^{n-1-j}\tau_{n,i}}\,\bigl(1+o(1)\bigr).
\end{equation*}
Since the first term of~\eqref{eq:Snexact}, rescaled, tends to $0$
by~\eqref{eq:firstterm}, it is absorbed into the error, and~\eqref{eq:weightedavg}
follows uniformly on compact subsets of $\C$.
\end{proof}

\begin{lemma}\label{lem:prefratio}
Let $\alpha\in\R_-$ and $j\ge1$. For each fixed integer $l\ge0$, setting $m=n-l$,
\begin{equation}\label{eq:prefratio}
\frac{\dif^{j}P_n(\alpha)}{\dif^{j}P_m(\alpha)}
=\Bigl(-\frac{1}{c_\mu}\Bigr)^{\!l}n^{\,l}\bigl(1+o(1)\bigr),
\quad n\to\infty.
\end{equation}
\end{lemma}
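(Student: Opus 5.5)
The plan is to factor the ratio as a telescoping product of $l$ consecutive-index ratios. Since $l$ is fixed, each factor can be handled by the asymptotics of Lemma~\ref{lem:mhproperties}:
\begin{equation*}
\frac{\dif^{j}P_n(\alpha)}{\dif^{j}P_m(\alpha)}=\prod_{s=0}^{l-1}\frac{\dif^{j}P_{n-s}(\alpha)}{\dif^{j}P_{n-s-1}(\alpha)}.
\end{equation*}
The case $l=0$ is trivial. For $l\ge1$ and $n$ large, all indices satisfy $n-s-1>j$. By the argument in the proof of Lemma~\ref{lem:kernelgrowth} (zeros of the lowered family lie in $[0,\infty)$ and $\alpha<0$), every denominator is nonzero, so the product is well defined.

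For each factor, apply the forward shift identity~\eqref{eq:fwdgeneral}:
\begin{equation*}
\frac{\dif^{j}P_{N}(\alpha)}{\dif^{j}P_{N-1}(\alpha)}=\frac{[N]_j}{[N-1]_j}\,\frac{\widetilde P_{N-j}(\alpha)}{\widetilde P_{N-1-j}(\alpha)},
\end{equation*}
where $\widetilde P$ is the lowered family: Charlier with the same $\mu$, or Meixner with parameter $\gamma+j$. The falling-factorial ratio is $N/(N-j)=1+O(N^{-1})$. For the polynomial ratio, use Lemma~\ref{lem:mhproperties} with $w=\alpha\in\C\setminus[0,\infty)$, applied to $\widetilde P$ at index $N-j$. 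This is legitimate because the lowered family is again Charlier or Meixner with the same $\mu$, so it obeys~\eqref{eq:mhcommon} with the same $\kappa$ ratio $-1/c_\mu$ and a limit function $\widetilde\varphi$ satisfying $\widetilde\varphi(\alpha)\neq0$. The lemma gives
\begin{equation*}
\frac{\widetilde P_{N-j}(\alpha)}{\widetilde P_{N-1-j}(\alpha)}=-\frac{N-1-j-\alpha}{c_\mu}\bigl(1+O(N^{-1})\bigr)=-\frac{N}{c_\mu}\bigl(1+O(N^{-1})\bigr),
\end{equation*}
since $j$ and $\alpha$ are fixed. This is the same computation already used at the start of the proof of Lemma~\ref{lem:bracket}.

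Substituting $N=n-s$ for $s=0,\dots,l-1$ and multiplying the $l$ factors gives
\begin{equation*}
\Bigl(-\frac{1}{c_\mu}\Bigr)^{l}\prod_{s=0}^{l-1}(n-s)\,\bigl(1+o(1)\bigr).
\end{equation*}
Because $l$ is fixed, $\prod_{s=0}^{l-1}(n-s)=n^{l}(1+O(n^{-1}))$, and a product of finitely many $(1+o(1))$ factors is $1+o(1)$. This yields~\eqref{eq:prefratio}.

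The only delicate point is justifying that Lemma~\ref{lem:mhproperties} transfers to the lowered family with the same constant $c_\mu$. For Meixner this needs $\kappa_n=(\mu-1)^{-n}$ to be independent of $\gamma$, which holds by~\eqref{eq:mhMclassical}. One also needs the error term in the Mehler--Heine relation at the single real point $\alpha$ to be $o(1)$. That is all the lemma requires, since nonvanishing of $\widetilde\varphi(\alpha)=c/\Gamma(-\alpha)$ lets us divide the two asymptotic relations.
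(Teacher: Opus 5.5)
Your proposal is correct and takes the same route as the paper. Both telescope into $l$ consecutive-index ratios and use the forward shift identity to reduce each factor to $\frac{s}{s-j}$ times a ratio of the descended family. Both then apply Lemma~\ref{lem:mhproperties} at $w=\alpha$ and multiply the finitely many factors. You also make explicit two points the paper leaves implicit: an $o(1)$ error suffices, and $\kappa_n$ does not depend on $\gamma$, so the descended Meixner family has the same ratio $-1/c_\mu$.
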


\begin{proof}
For $l=0$ both sides equal $1$ and there is nothing to prove, so assume $l\ge1$.
By the forward shift operator~\eqref{eq:fwdgeneral},
$\dif^{j}P_s(\alpha)=[s]_j\,\widetilde P_{s-j}(\alpha)$ for every $s\ge j$, where
$\{\widetilde P_\cdot\}$ is the classical family descended $j$ times; this family
obeys~\eqref{eq:mhcommon},
and its limit function does not vanish at $\alpha$ since $\alpha<0$ avoids the
nonnegative integers. Writing the quotient as a telescoping product over the $l$
unit steps from $m=n-l$ to $n$,
\begin{equation}\label{eq:preftelescope}
\frac{\dif^{j}P_n(\alpha)}{\dif^{j}P_{n-l}(\alpha)}
=\prod_{r=1}^{l}\frac{\dif^{j}P_{n-r+1}(\alpha)}{\dif^{j}P_{n-r}(\alpha)},
\end{equation}
a product of $l$ consecutive-index ratios, each of which we now evaluate.

Fix $r\in\{1,\dots,l\}$ and set $s=n-r+1$. Applying the forward shift operator to
numerator and denominator, we deduce
\begin{equation}\label{eq:onestep}
\frac{\dif^{j}P_{s}(\alpha)}{\dif^{j}P_{s-1}(\alpha)}
=\frac{[s]_j}{[s-1]_j}\cdot
\frac{\widetilde P_{s-j}(\alpha)}{\widetilde P_{s-1-j}(\alpha)}
=\frac{s}{s-j}\cdot
\frac{\widetilde P_{s-j}(\alpha)}{\widetilde P_{s-1-j}(\alpha)}.
\end{equation}
The consecutive-index ratio~\eqref{eq:mhratios}, applied to
$\{\widetilde P_\cdot\}$ at the index $s-j$ and the point $\alpha$, gives
\begin{equation*}
\frac{\widetilde P_{s-j}(\alpha)}{\widetilde P_{s-1-j}(\alpha)}
=-\frac{s-1-j-\alpha}{c_\mu}\bigl(1+O(n^{-1})\bigr).
\end{equation*}
Substituting into~\eqref{eq:onestep} and using $s(s-j)^{-1}\to1$ together with
$(s-1-j-\alpha)(s-1-\alpha)^{-1}\to1$, both uniformly for $r\le l$ with $l$ fixed, we infer
\begin{equation}\label{eq:onesteplim}
\frac{\dif^{j}P_{s}(\alpha)}{\dif^{j}P_{s-1}(\alpha)}
=-\frac{s-1-\alpha}{c_\mu}\bigl(1+o(1)\bigr)
=-\frac{n-r-\alpha}{c_\mu}\bigl(1+o(1)\bigr),
\end{equation}
the last equality on recalling $s-1=n-r$.

Multiplying the $l$ relations~\eqref{eq:onesteplim} for $r=1,\dots,l$
in~\eqref{eq:preftelescope}, the $l$ factors $-c_\mu^{-1}$ combine into
$(-c_\mu^{-1})^{l}$ and the $l$ error factors $1+o(1)$ into a single $1+o(1)$, since
$l$ is fixed. This yields the first equality in~\eqref{eq:prefratio}. For the
second, each factor of the product satisfies $(n-r-\alpha)\,n^{-1}\to1$ as $n\to\infty$
with $r\le l$ fixed, so
\begin{equation*}
\prod_{r=1}^{l}(n-r-\alpha)=n^{\,l}\prod_{r=1}^{l}\frac{n-r-\alpha}{n}
=n^{\,l}\bigl(1+o(1)\bigr),
\end{equation*}
the product of finitely many factors each tending to $1$. Substituting gives the
second equality in~\eqref{eq:prefratio}.
\end{proof}

\begin{lemma}\label{lem:Qlimit}
Let $\alpha\in\R_-$, $j\ge1$, and let $Q_{n,k}$ and $R_k$ be as in~\eqref{eq:Qdef}.
Then, for each fixed integer $i\ge0$, we have
\begin{equation}\label{eq:Qlimit}
\frac{\kappa_n^{-1}n}{\Gamma(n-z)}\,Q_{n,\,n-1-i}(z)
\xrightarrow[n\to\infty]{}
-(i+1)(z-\alpha)\,\varphi(z),
\end{equation}
uniformly on compact subsets $K\subset\C$.
\end{lemma}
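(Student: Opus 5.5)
The plan is to avoid expanding $Q_{n,n-1-i}$ directly. Its two terms $P_n(z)$ and $\frac{\dif^{j}P_n(\alpha)}{\dif^{j}P_{n-1-i}(\alpha)}P_{n-1-i}(z)$ agree to leading order, and their difference is only of relative size $n^{-1}$. Handling them separately would therefore need Mehler--Heine asymptotics with relative error $o(n^{-1})$, which~\eqref{eq:mhcommon} does not supply. Instead I will telescope $Q_{n,n-1-i}$ into a sum of $i+1$ consecutive brackets $\Nf_s$ of Lemma~\ref{lem:bracket}, in which the cancellation has already been carried out.

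Set $l=i+1$ and $m=n-l$. Since $Q_{n,m}(z)=\dif^{j}P_n(\alpha)\bigl(R_n(z)-R_m(z)\bigr)$, and
\begin{equation*}
R_s(z)-R_{s-1}(z)=\frac{\Nf_s(z)}{\dif^{j}P_{s}(\alpha)\,\dif^{j}P_{s-1}(\alpha)},
\end{equation*}
by~\eqref{eq:Ndef}, telescoping over $s=n-r+1$ with $r=1,\dots,l$ gives the exact identity
\begin{equation*}
Q_{n,n-1-i}(z)=\sum_{r=1}^{l}\frac{\dif^{j}P_n(\alpha)}{\dif^{j}P_{n-r+1}(\alpha)}\cdot\frac{\Nf_{n-r+1}(z)}{\dif^{j}P_{n-r}(\alpha)}.
\end{equation*}
This is legitimate because $\dif^{j}P_k(\alpha)\neq0$ for $k\ge j$, as noted in the proof of Lemma~\ref{lem:Sndecomp}.

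Each summand is then evaluated with earlier lemmas. Lemma~\ref{lem:prefratio}, applied with shift $r-1$, gives the first factor as $(-c_\mu^{-1})^{r-1}n^{r-1}(1+o(1))$, with an error independent of $z$. Lemma~\ref{lem:bracket} at index $s=n-r+1$ gives the second factor as $-\frac{z-\alpha}{s}\,\kappa_s\Gamma(s-z)\varphi(z)(1+o(1))$, uniformly on $K$. To return to the scale at index $n$, I use $\kappa_n/\kappa_s=(-c_\mu^{-1})^{r-1}$, which follows from~\eqref{eq:mhratios}, together with $\Gamma(n-z)/\Gamma(s-z)=\prod_{p=1}^{r-1}(n-p-z)=n^{r-1}(1+o(1))$, uniformly for $z\in K$ since $r\le l$ is fixed. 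The powers $(-c_\mu^{-1})^{r-1}$ and $n^{r-1}$ then cancel exactly, and $1/s=(1/n)(1+o(1))$. Each summand is therefore $-\frac{z-\alpha}{n}\kappa_n\Gamma(n-z)\varphi(z)(1+o(1))$. Multiplying by $\kappa_n^{-1}n/\Gamma(n-z)$ and summing the $l=i+1$ identical contributions yields $-(i+1)(z-\alpha)\varphi(z)$, which is~\eqref{eq:Qlimit}.

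The main obstacle I expect is uniformity on compact sets that meet the zeros of $\varphi$ (the nonnegative integers) or the poles of $\Gamma(n-z)$ for small $n$. There the multiplicative form $(1+o(1))$ in Lemma~\ref{lem:bracket} should be read additively, that is, as $\varphi(z)+o(1)$ uniformly on $K$. I would rewrite each step as: rescaled bracket equals $-(z-\alpha)\varphi(z)+o(1)$ uniformly on $K$. This is harmless because $\varphi$ and $z-\alpha$ are bounded on $K$, the prefactor errors do not depend on $z$, and only $l$ fixed terms are summed. For $n$ large, $\Gamma(n-z)$ has no poles on $K$, so the normalisation is well defined.
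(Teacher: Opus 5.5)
Your proposal is correct and follows the paper's proof essentially step for step: the paper likewise uses $Q_{n,n}=0$ to telescope $Q_{n,n-1-i}(z)=\dif^{j}P_n(\alpha)\sum_{l=0}^{i}\bigl(R_{n-l}(z)-R_{n-1-l}(z)\bigr)$, rewrites each summand as $\frac{\dif^{j}P_n(\alpha)}{\dif^{j}P_m(\alpha)}\cdot\frac{\Nf_m(z)}{\dif^{j}P_{m-1}(\alpha)}$ with $m=n-l$, and combines Lemma~\ref{lem:bracket}, Lemma~\ref{lem:prefratio} and the $\kappa$/$\Gamma$ index transfer so that $(-c_\mu)^{\pm l}$ and $n^{\pm l}$ cancel, leaving $i+1$ identical contributions $-(z-\alpha)\varphi(z)$. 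Your additive reading of the $(1+o(1))$ factors on compacts meeting the zeros of $\varphi$ is a worthwhile tightening that the paper leaves implicit; note only that the order-$n^{-1}$ cancellation you sidestep is delegated to Lemma~\ref{lem:bracket}, whose proof needs the consecutive ratio in \eqref{eq:mhratios} with relative error $o(n^{-1})$ (this holds, since the true error is $O(n^{-2})$, but it is sharper than the $O(n^{-1})$ recorded in Lemma~\ref{lem:mhproperties}).
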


\begin{proof}
From~\eqref{eq:Qdef} the bracket reads $Q_{n,m}(z)=P_n(z)-\dif^{j}P_n(\alpha)R_m(z)$,
so its dependence on the lower index $m$ is carried entirely by $R_m$. Hence the
difference of two brackets with consecutive lower indices is
\begin{equation}\label{eq:Qdiff}
Q_{n,m}(z)-Q_{n,m+1}(z)
=\dif^{j}P_n(\alpha)\bigl(R_{m+1}(z)-R_m(z)\bigr),
\end{equation}
the terms $P_n(z)$ cancelling. Summing~\eqref{eq:Qdiff} over $m$ from $n-1-i$ to
$n-1$, the left-hand side telescopes to $Q_{n,\,n-1-i}(z)-Q_{n,n}(z)$; since
$Q_{n,n}(z)=P_n(z)-\dif^{j}P_n(\alpha)R_n(z)=P_n(z)-P_n(z)=0$, we obtain
\begin{equation}\label{eq:Qtelescope}
Q_{n,\,n-1-i}(z)
=\dif^{j}P_n(\alpha)\sum_{l=0}^{i}\bigl(R_{n-l}(z)-R_{n-1-l}(z)\bigr),
\end{equation}
a sum of exactly $i+1$ terms, indexed by $l=0,1,\dots,i$.

We next show that each summand of~\eqref{eq:Qtelescope} is a delay-zero bracket.
Fix $l$ and set $m:=n-l$. Placing the two fractions of $R_m-R_{m-1}$ over the common
denominator $\dif^{j}P_m(\alpha)\dif^{j}P_{m-1}(\alpha)$, we get
\begin{eqnarray}
\dif^{j}P_n(\alpha)\bigl(R_{m}(z)-R_{m-1}(z)\bigr)
&=&\dif^{j}P_n(\alpha)\Bigl(\frac{P_m(z)}{\dif^{j}P_m(\alpha)}
-\frac{P_{m-1}(z)}{\dif^{j}P_{m-1}(\alpha)}\Bigr)\nonumber\\
&=&\frac{\dif^{j}P_n(\alpha)}{\dif^{j}P_m(\alpha)}\cdot
\frac{\dif^{j}P_{m-1}(\alpha)\,P_m(z)-\dif^{j}P_m(\alpha)\,P_{m-1}(z)}
{\dif^{j}P_{m-1}(\alpha)}\nonumber\\
&=&\frac{\dif^{j}P_n(\alpha)}{\dif^{j}P_m(\alpha)}\cdot
\frac{\Nf_m(z)}{\dif^{j}P_{m-1}(\alpha)},
\label{eq:termasN}
\end{eqnarray}
where $\Nf_m(z)$ is precisely the bracket~\eqref{eq:Ndef} at top index $m=n-l$.

It remains to rescale~\eqref{eq:termasN} by \eqref{eq:factorMH} and let
$n\to\infty$ with $l$ fixed, so that $m=n-l\to\infty$ as well. The bracket at index
$m$ is governed by Lemma~\ref{lem:bracket}, which gives, uniformly on $K$, the following
\begin{equation}\label{eq:Nmlimit}
\frac{\Nf_m(z)}{\dif^{j}P_{m-1}(\alpha)}
=-\,\frac{z-\alpha}{m}\,\kappa_m\,\Gamma(m-z)\,\varphi(z)\,\bigl(1+o(1)\bigr),
\end{equation}
while the prefactor is supplied by Lemma~\ref{lem:prefratio}, which gives
\begin{equation}\label{eq:prefinserted}
\frac{\dif^{j}P_n(\alpha)}{\dif^{j}P_m(\alpha)}
=\Bigl(-\frac{1}{c_\mu}\Bigr)^{\!l}n^{\,l}\bigl(1+o(1)\bigr).
\end{equation}
The rescaling factor itself transfers from the index $n$ to the index $m$ through
the scale and Gamma quotients, which are obtained from
\eqref{eq:mhratios} iterated $l$ times and from the functional
equation of $\Gamma$,
\begin{equation}\label{eq:scalegamma}
\frac{\kappa_m}{\kappa_n}=(-c_\mu)^{\,l}\bigl(1+o(1)\bigr),
\quad
\frac{\Gamma(m-z)}{\Gamma(n-z)}=\prod_{r=1}^{l}\frac{1}{n-r-z}
=n^{-l}\bigl(1+o(1)\bigr).
\end{equation}
Inserting~\eqref{eq:Nmlimit}--\eqref{eq:scalegamma} into~\eqref{eq:termasN} and
using $m=n-l\sim n$, we deduce
\begin{equation}\label{eq:termlimit}
\frac{\kappa_n^{-1}n}{\Gamma(n-z)}\cdot
\frac{\dif^{j}P_n(\alpha)}{\dif^{j}P_m(\alpha)}\cdot
\frac{\Nf_m(z)}{\dif^{j}P_{m-1}(\alpha)}
\xrightarrow[n\to\infty]{}
-(z-\alpha)\,\varphi(z),
\end{equation}
uniformly on $K$. Indeed, the four factors carrying $l$-dependence, namely
$(-c_\mu^{-1})^{l}$ and $n^{l}$ from~\eqref{eq:prefinserted}, together with
$(-c_\mu)^{l}$ and $n^{-l}$ from~\eqref{eq:scalegamma}, cancel in pairs:
$(-c_\mu^{-1})^{l}(-c_\mu)^{l}=1$ and $n^{l}n^{-l}=1$. What survives is the
$l$-independent value $-(z-\alpha)\varphi(z)$, so every one of the $i+1$ summands
in~\eqref{eq:Qtelescope} contributes the same limit.

Summing the $i+1$ equal contributions~\eqref{eq:termlimit} over $l=0,1,\dots,i$, a
finite sum whose terms converge uniformly on $K$, yields~\eqref{eq:Qlimit}.
\end{proof}

\begin{lemma}[Asymptotics and summability of the weights]\label{lem:weights}
Let $t_k$ be as in~\eqref{eq:Kjjdescent} and $\tau_{n,i}$ as given by~\eqref{eq:weightedavg}.
Then, for each fixed $i\ge0$,
\begin{equation}\label{eq:taulim}
\tau_{n,i}\xrightarrow[n\to\infty]{}(1-c_\mu)^{\,i}.
\end{equation}
Consequently, the two series
\begin{equation}\label{eq:weightsums}
\sum_{i\ge0}(1-c_\mu)^{\,i}=\frac{1}{c_\mu},
\quad
\sum_{i\ge0}(i+1)(1-c_\mu)^{\,i}=\frac{1}{c_\mu^{2}},
\end{equation}
converge.
\end{lemma}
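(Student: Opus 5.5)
The plan is to write each normalised weight as a finite telescoping product of consecutive-index ratios of the $t_k$, and then read off the limit from the ratio asymptotics~\eqref{eq:tratiolimit} established in the proof of Lemma~\ref{lem:kernelgrowth}.

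\textbf{Step 1: telescoping.} For fixed $i\ge0$ and $n>j+i$, all indices involved are at least $j$, so
\begin{equation*}
\tau_{n,i}=\frac{t_{n-1-i}}{t_{n-1}}=\prod_{r=1}^{i}\frac{t_{n-1-r}}{t_{n-r}},
\end{equation*}
and each $t_k>0$ by~\eqref{eq:Kjjdescent}. For $i=0$ the product is empty, so $\tau_{n,0}=1=(1-c_\mu)^0$; this uses the convention $0^0=1$ in the Charlier case.

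\textbf{Step 2: limit of each factor.} Each factor is the reciprocal of $t_{k+1}/t_k$ with $k=n-1-r$. Since $r\le i$ and $i$ is fixed, $k\to\infty$ as $n\to\infty$.
\begin{itemize}
\item In the Charlier case, \eqref{eq:tratiolimit} gives $t_{k+1}/t_k\to+\infty$, so each factor tends to $0=1-c_\mu$.
\item In the Meixner case the ratio tends to $1/\mu$, so each factor tends to $\mu=1-(1-\mu)=1-c_\mu$.
\end{itemize}

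\textbf{Step 3: product of the limits.} A product of finitely many (exactly $i$) convergent factors converges to the product of the limits. This yields~\eqref{eq:taulim}.

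\textbf{Step 4: the series.} Set $\rho=1-c_\mu$. Then $\rho=0$ for Charlier and $\rho=\mu\in(0,1)$ for Meixner, so $0\le\rho<1$ in both cases. The series in~\eqref{eq:weightsums} are the geometric series and its termwise derivative $\sum_{i\ge0}(i+1)\rho^{i}$, both of which converge for $|\rho|<1$. Their sums are $(1-\rho)^{-1}=c_\mu^{-1}$ and $(1-\rho)^{-2}=c_\mu^{-2}$ respectively.

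\textbf{Expected obstacle.} The only delicate point is making sure that~\eqref{eq:tratiolimit} is applied along indices $k=n-1-r$ that genuinely tend to infinity, which holds because $i$ is fixed. I would also note explicitly that~\eqref{eq:taulim} is only a pointwise statement in $i$. Any passage to the limit inside the full sums in~\eqref{eq:weightedavg} would additionally require a uniform geometric domination $\tau_{n,i}\le C q^{-i}$. Such a bound follows from~\eqref{eq:ratiobound} for $n-1-i\ge k_0$, but it is not needed for the present statement.
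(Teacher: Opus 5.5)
Your proposal is correct and follows the paper's proof essentially step for step. The paper likewise telescopes $\tau_{n,i}$ into $i$ consecutive ratios $t_k/t_{k+1}$, takes reciprocals in \eqref{eq:tratiolimit} to obtain the limit $1-c_\mu$ for each factor, and sums the geometric series and its termwise derivative using $0\le 1-c_\mu<1$. Your remark that a uniform geometric bound such as $\tau_{n,i}\le q^{-i}$ would be needed to pass to the limit inside the full sums of \eqref{eq:weightedavg} is a fair point about the later use in Proposition~\ref{prop:closedform}, but, as you note, this lemma does not require it.
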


\begin{proof}
Clearly, taking reciprocals in~\eqref{eq:tratiolimit}, we arrive at
\begin{equation}\label{eq:recratio}
\frac{t_{k}}{t_{k+1}}\xrightarrow[k\to\infty]{}1-c_\mu,
\end{equation}
the limit being $0$ for Charlier, where $c_\mu=1$, and $\mu$ for Meixner, where
$1-c_\mu=\mu$; in the Charlier case this is the statement that the reciprocal of a
sequence diverging to $+\infty$ tends to $0$.

To establish~\eqref{eq:taulim}, fix $i\ge0$ and telescope the quotient $\tau_{n,i}$
over the $i$ unit steps from the index $n-1-i$ up to $n-1$,
\begin{equation}\label{eq:tautelescope}
\tau_{n,i}=\frac{t_{n-1-i}}{t_{n-1}}
=\prod_{l=0}^{i-1}\frac{t_{n-2-l}}{t_{n-1-l}},
\end{equation}
each factor being of the form~\eqref{eq:recratio} at the index $k=n-2-l$. As
$n\to\infty$ with $i$ fixed, every one of these $i$ indices tends to infinity, so
each factor tends to $1-c_\mu$ by~\eqref{eq:recratio}; the product of finitely many
convergent factors converges to the product of the limits, which
gives~\eqref{eq:taulim}.

Finally, since $x=1-c_\mu$ satisfies $0\le x<1$ in both families, the geometric
series and its termwise derivative converge, and
\begin{equation*}
\sum_{i\ge0}x^{\,i}=\frac{1}{1-x},
\quad
\sum_{i\ge0}(i+1)x^{\,i}=\frac{d}{dx}\sum_{i\ge0}x^{\,i+1}
=\frac{1}{(1-x)^{2}},
\end{equation*}
the termwise differentiation being legitimate inside the disc of convergence.
Substituting $x=1-c_\mu$, so that $1-x=c_\mu$, gives~\eqref{eq:weightsums}.
\end{proof}

\begin{proposition}\label{prop:closedform}
Let $\{\Sob{n}\}_{n\ge0}$ be the Sobolev-type family of~\eqref{eq:innerprod} with
fixed $j\ge1$, $\alpha\in\R_-$, $\lambda>0$. Then,
\begin{equation}\label{eq:reductionlimit}
\lim_{n\to\infty}\frac{\kappa_n^{-1}\,n}{\Gamma(n-z)}\,\Sob{n}(z)
=-\,\frac{(z-\alpha)\,\varphi(z)}{c_\mu},
\end{equation}
uniformly on compact subsets of $\C$. In particular, the limit is independent of $\lambda$ and of $j$.
\end{proposition}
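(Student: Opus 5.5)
The plan is to pass to the limit in the weighted-average representation \eqref{eq:weightedavg} of Lemma~\ref{lem:Sndecomp}. By Lemma~\ref{lem:Qlimit}, each rescaled bracket $\frac{\kappa_n^{-1}n}{\Gamma(n-z)}Q_{n,n-1-i}(z)$ converges to $-(i+1)(z-\alpha)\varphi(z)$ for fixed $i$. By Lemma~\ref{lem:weights}, $\tau_{n,i}\to(1-c_\mu)^i$. If the limit can be interchanged with the sums over $i$, then
\begin{equation*}
\frac{\sum_{i\ge0}(1-c_\mu)^i\,\bigl(-(i+1)(z-\alpha)\varphi(z)\bigr)}{\sum_{i\ge0}(1-c_\mu)^i}
=-(z-\alpha)\varphi(z)\,\frac{c_\mu^{-2}}{c_\mu^{-1}}=-\frac{(z-\alpha)\varphi(z)}{c_\mu},
\end{equation*}
which is \eqref{eq:reductionlimit}. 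Independence of $\lambda$ and $j$ then follows, since neither parameter survives in this expression.

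\textbf{Denominator.} Use Fatou termwise for the lower bound. For the upper bound, from $t_k/t_{k+1}\to 1-c_\mu$ fix $\theta$ with $1-c_\mu<\theta<1$ (any $\theta\in(0,1)$ for Charlier). Then there is $k_1$ with $t_k/t_{k+1}\le\theta$ for $k\ge k_1$, so $\tau_{n,i}\le\theta^i$ whenever $n-1-i\ge k_1$. The indices with $n-1-i<k_1$ are finitely many, namely $k=j,\dots,k_1-1$. Their weights $t_k/t_{n-1}$ tend to $0$ geometrically because $t_{n-1}\ge c\,q^n$ by \eqref{eq:tgeom}. Dominated convergence then gives $\sum_i\tau_{n,i}\to 1/c_\mu$.

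\textbf{Numerator (main obstacle).} Here I need a uniform bound
\begin{equation*}
\Bigl|\frac{\kappa_n^{-1}n}{\Gamma(n-z)}Q_{n,n-1-i}(z)\Bigr|\le C_K\,(i+1)\,A^i,
\end{equation*}
with $A\theta<1$, valid for all $0\le i\le n-1-j$ and $z\in K$. The starting point is the telescoped form \eqref{eq:Qtelescope}--\eqref{eq:termasN}, which writes $Q_{n,n-1-i}$ as a sum of $i+1$ terms. Each term is a product of the prefactor ratio $\dif^jP_n(\alpha)/\dif^jP_m(\alpha)$ with $\Nf_m/\dif^jP_{m-1}(\alpha)$. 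I would bound these non-asymptotically as follows.
\begin{itemize}
\item Lemma~\ref{lem:pointwise} (two-sided Gamma bounds, extended to the descended family) gives the prefactor ratio as roughly $c_\mu^{-l}\Gamma(n-\alpha)/\Gamma(m-\alpha)$ times a bounded constant.
\item The bracket $\Nf_m$ is bounded crudely by $|P_m(z)|+|\dif^jP_m(\alpha)/\dif^jP_{m-1}(\alpha)|\,|P_{m-1}(z)|$. Combined with Mehler--Heine, $|P_m(z)|\lesssim|\kappa_m\Gamma(m-z)|$ uniformly on $K$.
\end{itemize}
After multiplying by $\kappa_n^{-1}n/\Gamma(n-z)$, the Gamma ratios $\Gamma(n-\alpha)\Gamma(m-z)/(\Gamma(m-\alpha)\Gamma(n-z))$ are polynomially bounded in $n$, roughly $n^{O(1)}$. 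The $\kappa$ and $c_\mu$ powers cancel.

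The concern is that this crude bound loses a factor $n$. It is not geometrically small in $i$, only polynomial in $n$, so summing against $\tau_{n,i}$ for $i$ of size comparable to $n$ is delicate. The remedy I would try is to split the sum at $i\le N$ versus $i>N$. For $i>N$, use $\tau_{n,i}\le\theta^i$. The polynomial loss is $n^{p}$ with $p$ fixed. In the Charlier case $\theta$ can be taken arbitrarily small, so the tail is negligible, but I would verify carefully that the $n^p$ factor is beaten uniformly.

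In the Meixner case, where $\theta>\mu$ is forced, I would instead refine the per-term bound using the exact Gamma ratios. The aim is to show that the $l$-th term is bounded by $C((m/n)^{\operatorname{Re}(\alpha-z)}+\dots)$, hence uniformly bounded, without an $n$ loss. That would give $|\text{rescaled }Q_{n,n-1-i}|\le C(i+1)$, which is summable against $\theta^i$. Dominated convergence (Tannery's theorem) then justifies the interchange uniformly on $K$. Extending from compacts avoiding $[0,\infty)$ to all compacts of $\C$ would follow from uniform convergence on circles together with the maximum principle, since both sides are entire.
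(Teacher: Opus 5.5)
Your route is the paper's: pass to the limit in \eqref{eq:weightedavg} using Lemmas~\ref{lem:Qlimit} and~\ref{lem:weights}, take the quotient $c_\mu^{-2}/c_\mu^{-1}$, then extend to all of $\C$. The paper simply asserts that the $n$-dependent sums converge to the infinite series, with no domination argument. So you are right that domination is the real content, and your denominator argument is fine. Your numerator plan, however, does not close as stated.

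A split at a fixed $N$ with majorant $\tau_{n,i}\le\theta^i$ cannot absorb any polynomial loss, since $n^p\sum_{i>N}\theta^i\not\to0$ for fixed $N$. For Meixner, where $\tau_{n,i}\to\mu^i$, there is no way around this. For Charlier it works only with the $n$-dependent decay $\tau_{n,i}\le(C/n)^i$ for $i\le n/2$, which follows from $t_k/t_{k+1}\sim\mu/k$.

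The refinement you need is the cancellation of Lemma~\ref{lem:bracket} made effective: $|\Nf_m(z)/\dif^jP_{m-1}(\alpha)|\le C\,m^{-1}|\kappa_m\Gamma(m-z)|$ for $m\ge m_0$ and $z\in K$. Gamma ratios alone do not produce the factor $m^{-1}$, because the two pieces of $\Nf_m$ are each of size $|\kappa_m\Gamma(m-z)|$. Insert this into \eqref{eq:termasN} together with the two-sided bounds of Lemma~\ref{lem:pointwise}. The $l$-th rescaled summand is then bounded by $C(n/m)^{1+\Re z-\alpha}$ with $m=n-l$. Contrary to your claim, this is not uniformly bounded when $\Re z>\alpha-1$, but it is $O(1)$ for $m\ge n/2$.

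So split at $i\approx n/2$ rather than at a fixed $N$. For $i\le n/2$, the rescaled bracket is at most $C(i+1)$ and $\tau_{n,i}\le\theta^i$, which is an $n$-independent summable majorant. For $i>n/2$, the crude polynomial bound $Cn^p$ (from Mehler--Heine and Lemma~\ref{lem:pointwise}) is beaten by weights that decay exponentially in $n$. That decay comes from $\theta^{i}$ for indices $n-1-i\ge k_1$, and from $t_{n-1}\ge c\,q^n$ for the finitely many bottom indices. This gives Tannery's theorem uniformly on $K$, for both families at once.

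The extension step also fails as written. Every circle enclosing a point of $[0,\infty)$ meets $[0,\infty)$, so convergence on compacts of $\C\setminus[0,+\infty)$ plus the maximum principle gives nothing there. Run the argument on compacts of $\C\setminus\N_0$ instead; that is all the inputs need, since there $\varphi\neq0$ and $P_n\neq0$ for large $n$. Then apply the maximum principle to the entire functions $F_n-F$ on the circles $|z-p|=\tfrac12$, $p\in\N_0$. The paper's proof has the same slip: its circles $C_p$ are claimed to lie in $\C\setminus[0,+\infty)$, yet they contain $p+\tfrac12\in[0,\infty)$.
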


\begin{proof}
Fix a compact set $K\subset\C\setminus[0,+\infty)$. The restriction will be removed
at the end using Vitali's theorem. Consider first the numerator
of~\eqref{eq:weightedavg}. For each fixed $i$, its general term converges pointwise,
uniformly on $K$. Indeed, by Lemma~\ref{lem:Qlimit} the rescaled bracket tends to
$-(i+1)(z-\alpha)\varphi(z)$, while by~\eqref{eq:taulim} the weight tends to
$(1-c_\mu)^{\,i}$, so that
\begin{equation}\label{eq:generalterm}
\tau_{n,i}\,\frac{\kappa_n^{-1}n}{\Gamma(n-z)}\,Q_{n,\,n-1-i}(z)
\xrightarrow[n\to\infty]{}
-(i+1)(z-\alpha)\,\varphi(z)\,(1-c_\mu)^{\,i}.
\end{equation}
Thus, we deduce
\begin{equation}\label{eq:numlimit}
\sum_{i=0}^{n-1-j}\tau_{n,i}\,\frac{\kappa_n^{-1}n}{\Gamma(n-z)}\,Q_{n,\,n-1-i}(z)
\xrightarrow[n\to\infty]{}
-(z-\alpha)\,\varphi(z)\sum_{i\ge0}(i+1)(1-c_\mu)^{\,i}
=-\,\frac{(z-\alpha)\,\varphi(z)}{c_\mu^{2}},
\end{equation}
the last equality being the second series of~\eqref{eq:weightsums}.

Then, by~\eqref{eq:taulim} and the first series
of~\eqref{eq:weightsums},
\begin{equation}\label{eq:denlimit}
\sum_{i=0}^{n-1-j}\tau_{n,i}\xrightarrow[n\to\infty]{}
\sum_{i\ge0}(1-c_\mu)^{\,i}=\frac{1}{c_\mu}.
\end{equation}
Since this limit is a nonzero constant, the quotient in~\eqref{eq:weightedavg}
converges to the quotient of the limits~\eqref{eq:numlimit} and~\eqref{eq:denlimit};
hence, uniformly on $K$,
\begin{equation}\label{eq:limitonK}
\frac{\kappa_n^{-1}n}{\Gamma(n-z)}\,\Sob{n}(z)
\xrightarrow[n\to\infty]{}
\frac{-(z-\alpha)\varphi(z)/c_\mu^{2}}{1/c_\mu}
=-\,\frac{(z-\alpha)\,\varphi(z)}{c_\mu}.
\end{equation}
The mass $\lambda$ disappeared in the passage to~\eqref{eq:weightedavg}, and the
order $j$ enters only through the ratios \eqref{eq:prefinserted}
defining the brackets $Q_{n,k}$, whose limit in Lemma~\ref{lem:Qlimit} is
independent of $j$; the limit therefore depends on neither, as asserted.

It remains to remove the restriction $z\notin[0,+\infty)$. Define
\begin{equation*}
    F_n(z)=\frac{\kappa_n^{-1}\,n}{\Gamma(n-z)}\,\Sob{n}(z).
\end{equation*}
Since $\Gamma^{-1}$ is an entire function, the poles of the rescaling factor are cancelled,
and therefore each $F_n$ is entire.

Fix $p\in\mathbb{N}_0$. The circle
\begin{equation*}
    C_p=\{z\in\mathbb{C}:|z-p|=1/2\},
\end{equation*}
is contained in $\mathbb{C}\setminus[0,+\infty)$.
By~\eqref{eq:limitonK}, the sequence $\{F_n\}$ converges uniformly on $C_p$.
Hence there exists a constant $M_p>0$ such that
\begin{equation*}
    |F_n(z)|\le M_p,
\quad
z\in C_p,\quad n\ge1.
\end{equation*}
Since each $F_n$ is entire, the Maximum Modulus Principle implies that the same bound
holds throughout the closed disc
\begin{equation*}
    D_p=\{z\in\mathbb{C}:|z-p|\le1/2\},
\end{equation*}
that is,
\begin{equation*}
    |F_n(z)|\le M_p,
\quad
z\in D_p,\quad n\ge1.
\end{equation*}
Away from the nonnegative integers, local boundedness already follows from the uniform
convergence established in~\eqref{eq:limitonK}. Therefore, $\{F_n\}$ is locally bounded
on the whole complex plane. Since $F_n$ converges on
$\mathbb{C}\setminus[0,+\infty)$, which has accumulation points in $\mathbb{C}$,
Vitali's theorem~\cite{Conway78,Mozo24} implies that $\{F_n\}$ converges locally
uniformly on $\mathbb{C}$ to a holomorphic function. This limit necessarily coincides
with the entire function given by the right-hand side of~\eqref{eq:limitonK}. Hence
\eqref{eq:reductionlimit} holds uniformly on compact subsets of $\mathbb{C}$, which
completes the proof.
\end{proof}

The two theorems follow by inserting the family-specific data
$\varphi,\kappa_n,c_\mu$ into Proposition~\ref{prop:closedform}.

\begin{theorem}\label{thm:mhcharlier}
Let $\{\Sob{n}\}_{n\ge0}$ be the monic Charlier-Sobolev type orthogonal polynomials
of~\eqref{eq:innerprod}, with $\mu>0$, $\alpha\in\R_{-}$, $\lambda>0$, $j\ge1$.
Then, uniformly on compact subsets of $\C$,
\begin{equation}\label{eq:mhScharlier}
\lim_{n\to\infty}\frac{(-1)^{n}\,n}{\Gamma(n-z)}\,\Sob{n}(z)
=-(z-\alpha)\,\frac{e^{\mu}}{\Gamma(-z)}.
\end{equation}
The limit is independent of $\lambda$ and $j$.
\end{theorem}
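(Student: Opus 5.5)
The plan is to specialise Proposition~\ref{prop:closedform} to the Charlier column of Table~\ref{tab:families}. Three family-specific data need to be identified.

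\begin{itemize}
\item The scale is $\kappa_n=(-1)^n$, by \eqref{eq:mhcommon}. Hence $\kappa_n^{-1}=(-1)^n$, and the rescaling factor $\kappa_n^{-1}n/\Gamma(n-z)$ becomes exactly $(-1)^n n/\Gamma(n-z)$, which is the factor appearing in \eqref{eq:mhScharlier}.
\item The limit function is $\varphi=\varphi_C(z)=e^{\mu}/\Gamma(-z)$, by \eqref{eq:mhCclassical}.
\item The constant is $c_\mu=1$, as fixed in Lemma~\ref{lem:mhproperties}.
\end{itemize}

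Substituting these into \eqref{eq:reductionlimit} gives the right-hand side $-(z-\alpha)\varphi_C(z)/1=-(z-\alpha)e^{\mu}/\Gamma(-z)$. The convergence is uniform on compact subsets of $\C$, as the proposition asserts.

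I would also check that the ingredients entering Proposition~\ref{prop:closedform} hold in the Charlier case.

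\begin{itemize}
\item In Lemma~\ref{lem:weights}, $1-c_\mu=0$. So $\tau_{n,i}\to0$ for $i\ge1$ and $\tau_{n,0}=1$, and both series in \eqref{eq:weightsums} reduce to $1$.
\item The descended family $\widetilde P$ in Lemmas~\ref{lem:bracket} and \ref{lem:prefratio} is again Charlier with the same $\mu$, by \eqref{eq:fwdCharlier}. Its Mehler--Heine data therefore coincide with those of $P_n$, and its limit function $\widetilde\varphi=\varphi_C$ does not vanish at $\alpha<0$.
\end{itemize}

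The independence of $\lambda$ and $j$ is inherited directly from the proposition, since the final expression contains neither.

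The main obstacle is not the algebra but the justification inside Proposition~\ref{prop:closedform} of passing to the limit termwise in the weighted average \eqref{eq:weightedavg}, where the number of terms grows with $n$. In the Charlier case this should be simplest. The ratio $t_k/t_{k+1}\sim\mu(k+1)/(k-j-\alpha)^2\to0$ gives $\tau_{n,i}$ a super-geometric decay that is uniform in $n$ once $n-1-i$ is large. Together with a uniform bound on the rescaled brackets of order $(i+1)C^i$ coming from \eqref{eq:Qtelescope}, this makes the tail $\sum_{i\ge I}$ negligible by dominated convergence. I would state this explicitly as a remark if needed. Otherwise the theorem is an immediate corollary of Proposition~\ref{prop:closedform}.
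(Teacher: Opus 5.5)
Your proposal is correct and is exactly the paper's argument: the theorem follows by inserting $\varphi=\varphi_C$, $\kappa_n=(-1)^n$ and $c_\mu=1$ into Proposition~\ref{prop:closedform}. Your side remark on justifying the termwise limit in~\eqref{eq:weightedavg} is well taken, since the paper's proof of that proposition passes over it, and your domination argument via $t_k/t_{k+1}\to0$ does close it in the Charlier case; note also that Lemma~\ref{lem:bracket} hides a similar issue, because its $O(n^{-1})$ cancellation needs $P_n(w)/P_{n-1}(w)=-(n-1-w)\bigl(1+o(n^{-1})\bigr)$, which is true (the $n^{-1}$ correction in the Charlier expansion has an $n$-independent coefficient and cancels between consecutive indices) but is stronger than the $O(n^{-1})$ recorded in Lemma~\ref{lem:mhproperties}.
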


\begin{proof}
Here $\varphi=\varphi_C$, $\kappa_n=(-1)^n$ and $c_\mu=1$; substituting
into~\eqref{eq:reductionlimit} gives~\eqref{eq:mhScharlier}.
\end{proof}

\begin{theorem}\label{thm:mhmeixner}
Let $\{\Sob{n}\}_{n\ge0}$ be the monic Meixner-Sobolev type orthogonal polynomials
of~\eqref{eq:innerprod}, with $\gamma>0$, $0<\mu<1$, $\alpha\in\R_{-}$,
$\lambda>0$, $j\ge1$. Then, uniformly on compact subsets of $\C$,
\begin{equation}\label{eq:mhSmeixner}
\lim_{n\to\infty}\frac{(\mu-1)^{n}\,n}{\Gamma(n-z)}\,\Sob{n}(z)
=-\,\frac{z-\alpha}{(1-\mu)^{\gamma+z+1}\,\Gamma(-z)}.
\end{equation}
The limit is independent of $\lambda$ and $j$.
\end{theorem}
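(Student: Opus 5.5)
The plan is to argue exactly as in the proof of Theorem~\ref{thm:mhcharlier}, by specialising Proposition~\ref{prop:closedform} to the Meixner data. For the monic Meixner family we read off from~\eqref{eq:mhcommon} and~\eqref{eq:mhMclassical}:
\begin{equation*}
\kappa_n=(\mu-1)^{-n},\quad
\varphi=\varphi_M(z)=\frac{1}{(1-\mu)^{\gamma+z}\Gamma(-z)},\quad
c_\mu=1-\mu,
\end{equation*}
the last from Lemma~\ref{lem:mhproperties}. Since $\kappa_n^{-1}=(\mu-1)^{n}$, the left-hand side of~\eqref{eq:reductionlimit} is exactly the left-hand side of~\eqref{eq:mhSmeixner}. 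The right-hand side of~\eqref{eq:reductionlimit} then becomes
\begin{equation*}
-\frac{(z-\alpha)}{1-\mu}\cdot\frac{1}{(1-\mu)^{\gamma+z}\Gamma(-z)}
=-\frac{z-\alpha}{(1-\mu)^{\gamma+z+1}\Gamma(-z)},
\end{equation*}
which is~\eqref{eq:mhSmeixner}. Uniformity on compact subsets of $\C$, and independence of $\lambda$ and $j$, are inherited directly from Proposition~\ref{prop:closedform}.

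The only point that needs care, and the one I expect to be the main obstacle, is that the auxiliary results feeding Proposition~\ref{prop:closedform} must hold for the Meixner family with the descended family $\widetilde P$. By~\eqref{eq:fwdMeixner}, the $j$-th forward difference produces $M^{(\gamma+j,\mu)}_{n-j}$, which has a shifted parameter. This family has the same scale $\kappa_n$ and the same constant $c_\mu$, but its limit function is $\widetilde\varphi(z)=(1-\mu)^{-(\gamma+j+z)}/\Gamma(-z)$.

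I would therefore check the following. In Lemmas~\ref{lem:bracket} and~\ref{lem:prefratio} only ratios of consecutive values of $\widetilde P$ at $\alpha$ enter. These depend solely on $\kappa$ and the Gamma factor, so the constant $\widetilde\varphi(\alpha)$ cancels; it is nonzero because $\alpha<0$ avoids the zeros of $1/\Gamma(-z)$. In Lemma~\ref{lem:kernelgrowth} the weight-ratio limit is $1/\mu$, computed with the Meixner norms. This gives $1-c_\mu=\mu$ in Lemma~\ref{lem:weights}, consistent with $c_\mu=1-\mu\in(0,1)$, so both series in~\eqref{eq:weightsums} converge.

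Once these compatibility checks are confirmed, the result follows by direct substitution.
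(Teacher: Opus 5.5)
Your proposal is correct and is exactly the paper's argument: the theorem follows by inserting $\varphi=\varphi_M$, $\kappa_n=(\mu-1)^{-n}$ and $c_\mu=1-\mu$ into Proposition~\ref{prop:closedform}, where the factor $1/c_\mu$ turns $(1-\mu)^{\gamma+z}$ into $(1-\mu)^{\gamma+z+1}$, and uniformity and independence of $\lambda$ and $j$ are inherited from that proposition. Your extra checks on the shifted family $M^{(\gamma+j,\mu)}$ agree with the paper's treatment: the same $\kappa_n$ and $c_\mu$, $\widetilde\varphi(\alpha)\neq0$ cancelling in consecutive ratios, and $t_k/t_{k+1}\to\mu$; they are not strictly needed, since Proposition~\ref{prop:closedform} is already stated for both families.
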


\begin{proof}
Here $\varphi=\varphi_M$, $\kappa_n=(\mu-1)^{-n}$ and $c_\mu=1-\mu$; substituting
into~\eqref{eq:reductionlimit}, the factor $1/(1-\mu)$ produces
$-(z-\alpha)\varphi_M(z)/(1-\mu)$, which is~\eqref{eq:mhSmeixner}.
\end{proof}

\subsection{Numerical illustration of the Mehler--Heine formulas}
\label{subsec:numericalMH}

Figures~\ref{fig:charlier-mehler-heine} and
\ref{fig:meixner-mehler-heine} provide a numerical illustration of the
Mehler--Heine formulas established in the previous section for the
discrete Sobolev-type Charlier and Meixner families. Rather than
plotting the Sobolev polynomials $\Sob{n}(z)$ themselves, the figures
display the corresponding normalised sequences appearing on the
left-hand sides of the Mehler--Heine limits. This normalisation removes
the dominant growth of the monic polynomials and allows the limiting
functions to be observed directly.

For the Charlier--Sobolev family, the plotted sequence is
\begin{equation*}
\mathcal{F}^{C}_{n}(z)
=
\frac{(-1)^n n}{\Gamma(n-z)}
\Sob{n}(z),
\label{eq:scaledCharlierFigure}
\end{equation*}
which, according to Theorem~\ref{thm:mhcharlier}, satisfies
\begin{equation*}
\mathcal{F}^{C}_{n}(z)
\longrightarrow
\mathcal{F}^{C}(z)
=
-\frac{(z-\alpha)e^{\mu}}
{\Gamma(-z)},
\quad n\to\infty,
\label{eq:limitCharlierFigure}
\end{equation*}
uniformly on compact subsets of $\mathbb{C}$.

Figure~\ref{fig:charlier-mehler-heine} compares the normalised
Charlier--Sobolev polynomials for
$n=50,100,150,200,250$ with the limiting function
$\mathcal{F}^{C}(z)$. The computations were performed using $\alpha=-3$, $\mu=5$, $j=10$ and $\lambda=100$. As the degree increases, the graphs converge rapidly towards the
limiting curve, illustrating the locally uniform convergence predicted
by the Mehler--Heine theorem. The factor $z-\alpha$ introduces an
additional simple zero at the exterior mass point $\alpha=-3$, whereas
the remaining zeros coincide with those of the reciprocal Gamma
function $1/\Gamma(-z)$. Consequently, the figure clearly exhibits the
two principal features of the asymptotic zero distribution: one
exceptional zero converges to the exterior mass point, while the
remaining zeros preserve the classical Charlier asymptotic behaviour.

For the Meixner--Sobolev family, the corresponding normalised sequence
is
\begin{equation*}
\mathcal{F}^{M}_{n}(z)
=
\frac{(\mu-1)^n n}
{\Gamma(n-z)}
\Sob{n}(z),
\label{eq:scaledMeixnerFigure}
\end{equation*}
and Theorem~\ref{thm:mhmeixner} yields
\begin{equation*}
\mathcal{F}^{M}_{n}(z)
\longrightarrow
\mathcal{F}^{M}(z)
=
-\frac{z-\alpha}
{(1-\mu)^{\gamma+z+1}\Gamma(-z)},
\quad n\to\infty,
\label{eq:limitMeixnerFigure}
\end{equation*}
uniformly on compact subsets of $\mathbb{C}$.

Figure~\ref{fig:meixner-mehler-heine} presents the analogous comparison
for the Meixner--Sobolev family using $\alpha=-3$, $\gamma=2$, $\mu=1/2$, $j=10$ and $\lambda=100$. Again, the normalised Sobolev polynomials converge rapidly towards the
predicted limiting function. Although the Meixner limit differs from
the Charlier one by the nonvanishing factor
$(1-\mu)^{-\gamma-z-1}$, both families exhibit the same qualitative
behaviour: the exterior Sobolev perturbation generates exactly one new
asymptotic zero located at the mass point $\alpha$, while all remaining
zeros converge to the classical locations determined by
$1/\Gamma(-z)$.

Taken together, Figures~\ref{fig:charlier-mehler-heine} and
\ref{fig:meixner-mehler-heine} provide numerical confirmation of the
main asymptotic results established in this paper. They show not only
the convergence of the scaled Sobolev polynomials towards their
Mehler--Heine limits, but also the universality phenomenon proved
theoretically: although the finite-degree polynomials depend on the
Sobolev parameters $\lambda$ and $j$, these quantities disappear from
the limiting functions. The only parameter that remains is the location
of the exterior mass point $\alpha$, whose effect is precisely the
appearance of the additional asymptotic zero.

\begin{figure}[ht]
    \centering

    \begin{tikzpicture}
        \begin{axis}[
            width=0.95\textwidth,
            height=0.62\textwidth,
            xmin=-5,
            xmax=2,
            xlabel={$z$},
            ylabel={},
            axis lines=middle,
            grid=major,
            minor tick num=1,
            legend style={
                at={(0.5,-0.20)},
                anchor=north,
                legend columns=3,
                draw=none,
                font=\small
            },
            tick label style={
                font=\small
            },
            label style={
                font=\small
            },
            scaled ticks=false,
            unbounded coords=jump
        ]

        \addplot[
            blue,
            thick,
            dashed
        ]
        table[
            x expr=\thisrowno{0},
            y index=1
        ]{fChar_n50.dat};
        \addlegendentry{$\mathcal{F}^{C}_{50}(z)$}

        \addplot[
            red,
            thick,
            dashed
        ]
        table[
            x expr=\thisrowno{0},
            y index=1
        ]{fChar_n100.dat};
        \addlegendentry{$\mathcal{F}^{C}_{100}(z)$}

        \addplot[
            green!60!black,
            thick,
            dashed
        ]
        table[
            x expr=\thisrowno{0},
            y index=1
        ]{fChar_n150.dat};
        \addlegendentry{$\mathcal{F}^{C}_{100}(z)$}

        \addplot[
            purple,
            thick,
            dashed
        ]
        table[
            x expr=\thisrowno{0},
            y index=1
        ]{fChar_n200.dat};
        \addlegendentry{$\mathcal{F}^{C}_{200}(z)$}

        \addplot[
            orange,
            thick,
            dashed
        ]
        table[
            x expr=\thisrowno{0},
            y index=1
        ]{fChar_n250.dat};
        \addlegendentry{$\mathcal{F}^{C}_{250}(z)$}

        \addplot[
            black,
            very thick
        ]
        table[
            x expr=\thisrowno{0},
            y index=1
        ]{gChar_limit.dat};
        \addlegendentry{
            $\mathcal{F}^{C}(z)$
        }

        \end{axis}
    \end{tikzpicture}

    \caption{Convergence of the scaled Charlier--Sobolev polynomials
    toward the corresponding Mehler--Heine limit for
    $\alpha=-3$, $\mu=5$, $j=10$, and $\lambda=100$.}

    \label{fig:charlier-mehler-heine}
\end{figure}

\begin{figure}[ht]
    \centering

    \begin{tikzpicture}
        \begin{axis}[
            width=0.95\textwidth,
            height=0.62\textwidth,
            xmin=-3.5,
            xmax=2,
            xlabel={$z$},
            ylabel={},
            axis lines=middle,
            grid=major,
            minor tick num=1,
            legend style={
                at={(0.5,-0.20)},
                anchor=north,
                legend columns=3,
                draw=none,
                font=\small
            },
            tick label style={font=\small},
            label style={font=\small},
            scaled ticks=false,
            unbounded coords=jump
        ]

        \addplot[
            blue,
            thick,
            dashed
        ]
        table[
            x expr=\thisrowno{0},
            y index=1
        ]{fMeix_n50.dat};
        \addlegendentry{$\mathcal{F}^{M}_{50}(z)$}

        \addplot[
            red,
            thick,
            dashed
        ]
        table[
            x expr=\thisrowno{0},
            y index=1
        ]{fMeix_n100.dat};
        \addlegendentry{$\mathcal{F}^{M}_{100}(z)$}

        \addplot[
            green!60!black,
            thick,
            dashed
        ]
        table[
            x expr=\thisrowno{0},
            y index=1
        ]{fMeix_n150.dat};
        \addlegendentry{$\mathcal{F}^{M}_{150}(z)$}

        \addplot[
            purple,
            thick,
            dashed
        ]
        table[
            x expr=\thisrowno{0},
            y index=1
        ]{fMeix_n200.dat};
        \addlegendentry{$\mathcal{F}^{M}_{200}(z)$}

        \addplot[
            orange,
            thick,
            dashed
        ]
        table[
            x expr=\thisrowno{0},
            y index=1
        ]{fMeix_n250.dat};
        \addlegendentry{$\mathcal{F}^{M}_{250}(z)$}

        \addplot[
            black,
            very thick
        ]
        table[
            x expr=\thisrowno{0},
            y index=1
        ]{gMeix_limit.dat};
        \addlegendentry{$\mathcal{F}^{M}(z)$}

        \end{axis}
    \end{tikzpicture}

    \caption{Convergence of the scaled Meixner--Sobolev polynomials toward the corresponding Mehler--Heine limit for $\alpha=-3$, $\gamma=2$, $\mu=1/2$, $j=10$, and $\lambda=100$.}

    \label{fig:meixner-mehler-heine}
\end{figure}

\section{Conclusions}
\label{sec:conclu}

In this paper we have developed a unified analytic framework for the
discrete Sobolev-type Charlier and Meixner orthogonal polynomials
associated with arbitrary-order forward differences evaluated at an
exterior mass point. Starting from the connection formulas relating the
Sobolev-type and classical families, we established a systematic
generating-function approach that extends one of the most fundamental
analytical tools of the classical theory to a considerably broader
Sobolev setting.

The first main contribution of the paper is the derivation of explicit
generating functions for both Sobolev-type polynomial families together
with generating functions for their iterated forward differences. To the
best of our knowledge, these are the first generating-function
representations obtained for discrete Sobolev-type Charlier and Meixner
polynomials associated with arbitrary-order forward differences and an
exterior mass point. Their construction is nontrivial because the
Sobolev correction introduces the degree-dependent denominator $\delta_n$, which destroys the recursive mechanism underlying the classical
generating functions. The resulting representations therefore provide a
new analytical tool for investigating discrete Sobolev orthogonal
polynomials.

A second contribution is the derivation of new Mehler--Heine formulas
for both polynomial families. Unlike previously known results, which
were restricted to boundary masses, the present analysis considers an
exterior Sobolev mass and shows that the perturbation modifies the
leading asymptotic behaviour through the appearance of the factor
$(z-\alpha)$ in the limiting functions. Consequently, the asymptotic
limits preserve the classical Gamma-function structure while
incorporating explicitly the location of the exterior mass.

The Mehler--Heine formulas also provide a complete description of the
asymptotic distribution of the zeros. We proved that exactly one zero of
the Sobolev-type polynomials converges to the exterior mass point,
whereas all remaining zeros converge to the zeros of the corresponding
classical limiting functions. This establishes the precise asymptotic
effect of an exterior Sobolev perturbation and extends the known theory
beyond the previously studied boundary case.

Perhaps the most remarkable outcome of the present work is the
universality phenomenon exhibited by the limiting functions. Although
the Sobolev-type polynomials depend explicitly on the mass parameter
$\lambda$ and on the order $j$ of the forward difference operator, both
parameters disappear completely from the Mehler--Heine limits. The only
remaining trace of the Sobolev perturbation is the factor $(z-\alpha)$,
which records the location of the exterior mass through the appearance
of one additional asymptotic zero. This result reveals an unexpected
stability of the asymptotic regime under higher-order Sobolev
perturbations.

The numerical experiments presented in this paper fully support the
theoretical analysis. The scaled Sobolev-type polynomials exhibit rapid
convergence towards the predicted limiting functions, while the
evolution of their zeros agrees with the asymptotic behaviour derived
from the Mehler--Heine formulas.

Beyond the specific Charlier and Meixner families, the methodology
developed here suggests a general strategy for studying generating
functions and asymptotic properties of discrete Sobolev orthogonal
polynomials. In particular, the techniques introduced in this work may
be extended to other families of the Askey scheme and to more general
Sobolev perturbations involving different difference operators or
multiple mass points. We expect that the generating-function framework
developed in this paper will provide a useful foundation for future
investigations in these directions.





\section*{Acknowledgements}

The authors would like to thank the Department of Quantitative Methods at Universidad Loyola Andalusia for providing an excellent research environment and institutional support during the development of this work.

\section*{Declarations}

\subsection*{Ethical Approval}
Not applicable.

\subsection*{Consent to Participate}
Not applicable.

\subsection*{Consent to Publish}
Not applicable.

\subsection*{Data Availability Statement}
Not applicable.

\subsection*{Author Contributions}
Conceptualization, A.S.-L.; methodology, A.S.-L., J.-M., and E.J.-H.; software, A.S.-L. and J.-M.; validation, A.S.-L. and E.J.-H.; formal analysis, A.S.-L. and J.-M.; investigation, A.S.-L., E.J.-H., and J.-M.; resources, A.S.-L.; data curation, A.S.-L. and J.-M.; writing--original draft preparation, A.S.-L.; writing--review and editing, J.-M. and E.J.-H.; visualization, A.S.-L.; supervision, A.S.-L.; project administration, A.S.-L. All authors have read and agreed to the published version of the manuscript.

\subsection*{Funding}
This research received no external funding.

\subsection*{Competing Interests}
The authors declare that they have no competing interests.



\end{document}